\def\1{\hbox{1\kern-.35em\hbox{1}}}
\newtheorem{theorem}{Theorem}[section]
\newtheorem*{theorem*}{Theorem}
\newtheorem{lem}[theorem]{Lemma}
\newtheorem*{proposition*}{Proposition}
\newtheorem{remark}[theorem]{Remark}
\numberwithin{equation}{section}
\newcommand{\bea}{\begin{eqnarray}}
\newcommand{\eea}{\end{eqnarray}}
\newcommand{\be}{\begin{eqnarray*}}
\newcommand{\ee}{\end{eqnarray*}}
\newcommand{\Z}{{\mathbb Z}}
\newcommand{\C}{{\mathbb C}}
\def\tsv{\widetilde{\mathfrak{sv}}}
\newcommand{\ad}{{\rm ad}}
\newcommand{\Hom}{{\rm Hom}}
\newcommand{\Der}{{\rm Der}}
\newcommand{\Aut}{{\rm Aut}}
\def\iv{\mathcal{I}_{\mathfrak{v}}}
\def\W{{\rm\bf W}}
\def\Wab{{\rm\bf W}(a,b)}
\def\Wgab{{\rm\bf W^g}(a,b)}
\def\Igab{{\rm\bf I^g}(a,b)}
\def\qed{\hfill\mbox{$\Box$}}
\def\epsi{\epsilon}
\def\a{\alpha}
\def\l{\lambda}
\def\si{\sigma}
\def\dis{\displaystyle}
\def\vs{\vspace*}
\def\Z{\mathbb{Z}}
\def\C{\mathbb{C}}
\def\vp{\varphi}
\numberwithin{equation}{section}
\begin{document}
%
\centerline
{\bf Derivations and automorphism
groups of }\centerline{{\bf the original deformative  ${\bf  Schr\ddot{o}dinger}$-{\bf Virasoro} algebras}\,
\footnote{ This work is supported by the NSFC grant 11101269 and Supported in part by the Fundamental Research
Funds for the Central Universities.\\\indent Corresponding author: qfjiang@sjtu.edu.cn}}
\vspace{1em} \centerline
{\bf Qifen Jiang$^{\,1)}$ and Song Wang$^{\,2)}$}
\centerline{\footnotesize $^{\,1)}$\small\it Department of
Mathematics, Shanghai Jiaotong University, Shanghai 200240, China}
\centerline{\small\it E-mail: qfjiang@sjtu.edu.cn}
\centerline{\footnotesize $^{\,2)}$\small\it Department of Mathematics, TongJi University, Shanghai
200092, China}
\centerline{\small\it E-mail: wangsong025@sina.com}

\vspace{1.5em}
{\bf Abstract}
\ \ In this paper, we determine the
derivation algebra and the automorphism group of the original deformative  ${\rm  Schr\ddot{o}dinger}$-{\rm Virasoro} algebras which is
the semi-direct product Lie algebra of the Witt algebra and its
tensor density module ${\rm\bf I^g}(a,b)$.

{\bf Keywords}\ \ derivation algebra; automorphism group; Lie algebra $\Wgab$; Lie algebra $\Wab$; Witt algebra.

{\bf Subject Classification}\ \ 17B65, 17B68



%
\section{\bf Introduction }
%
\label{sub0-c-related}

The Witt algebra ${\bf W}$ is an infinite dimensional Lie algebra
over the complex field $\C$, with basis $\{L_m\,|\,m\in\Z\}$ and the
defining relations:
$$
[L_{m}, L_{n}]=(m-n)L_{m+n},\quad\text{for any}\  m,n\in\Z.
$$
The unique nontrivial one-dimensional central extension of {\bf W}
is the Virasoro algebra {\bf Vir}, which is closely related to
Kac-Moody algebras and plays an important role in 2D conformal field
theory. There exist  different generalizations of the classical
 Witt algebra and the Virasoro algebra, which have been studied by
many authors (see for example \cite{DZ,KR,PZ,SZ,SCY}, etc.). In
\cite{mathieu} and \cite{KR},  a class of representations ${\rm\bf
I}(a,b)=\bigoplus\limits_{m\in\Z}\C I_{m}$ for {\bf W} with two
complex parameters $a$  and $b$ were introduced. The action of ${\bf
W}$ on ${\rm\bf I}(a,b)$ is given by
$$
L_m\cdot I_n=-(n+a+bm)I_{m+n},\quad \forall m,n\in\Z.
$$
${\rm\bf I}(a,b)$ is the so called  tensor density module. In \cite{ZD}, the representations of $W(2,2)$ have been studied in terms of
vertex operators algebras. In
\cite{SCY}, they consider a generation of the Witt algebra {\bf W}:
$$\Wab=\bigoplus_{m\in\Z}\C L_m\bigoplus_{n\in\Z}\C I_n$$ and determine its structures.
$\Wab$ has been considered in the mathematical and physical literature in
\cite{OR}. In this paper, we consider the following generalization of the Lie algebra
$\Wab$(reference to \cite{SCY}).

Let $\Wgab=\bigoplus\limits_{m\in\Z}\C
L_m\bigoplus\limits_{n\in\Z}\C I_n\bigoplus\limits_{k\in\Z}\C
Y_{k+1/2}$ equipped with the following brackets:
\begin{eqnarray}\label{defi1}
&&[L_{m}, L_{n}]=(m-n)L_{m+n},\\
&&[L_{m},I_{n}]=-(n+a+bm)I_{m+n},\\
&&[L_{m},Y_{n+1/2}]=-(n+\frac{1-m+a+bm}{2})Y_{m+n+1/2},\\
&&[I_{m}, I_{n}]=0,\\
&&[Y_{m+1/2}, Y_{n+1/2}]=(m-n)I_{m+n+1},\\
\label{defi2}&&[I_{m}, Y_{n+1/2}]=0.
\end{eqnarray}
for all $m,n\in\Z$. Then $\Wgab$ is an infinite dimensional Lie
algebra over the complex field $\C$. The  Lie algebra $\Wgab$ is actually the same as $\mathcal{L}_{a,b}$ which is called the original deformative  ${\rm Schr\ddot{o}dinger}$-{\rm Virasoro} algebras by \cite{LS} up to isomorphism. In \cite{LS}, the second cohomology group of original deformative  ${\rm Schr\ddot{o}dinger}$-{\rm Virasoro} algebras were determined. It is clear that $\Wgab\simeq
{\rm \bf W}\ltimes {\rm\bf I^g}(a,b)$, where ${\rm \bf W}$ is the
Witt algebra and ${\rm\bf I^g}(a,b)=\bigoplus\limits_{m\in\Z}\C
I_m\bigoplus\limits_{n\in\Z}\C Y_{n+1/2}$ is an ideal of $\Wgab$.
 In this paper, our aim  is to determine  the derivation algebra and the automorphism group of
$\Wgab$.

%
\section{\bf The derivation algebra of $\Wgab$}
%
\label{sub6-c-related}

Let $G$ be a commutative group, $\mathfrak{g}=\bigoplus\limits_{g\in
G}\mathfrak{g}_{g}$ a $G$-graded Lie algebra. A
$\mathfrak{g}$-module $V$ is called $G$-graded, if
$$V=\bigoplus\limits_{g\in G}V_{g\in G},
\;\;\; \mathfrak{g}_{g}V_{h}\subseteq V_{g+h},\;\;\; \forall\;
g,h\in G.$$ Let $\mathfrak{g}$ be a Lie algebra and $V$  a
$\mathfrak{g}$-module. A linear map $D: \mathfrak{g}\longrightarrow
V$ is called a derivation, if for any $x, y\in \mathfrak{g}$,
$$D[x, y]=x.D(y)-y.D(x).$$
If there exists some $v\in V$ such that $D:x\mapsto x.v$, then $D$
is called an inner derivation. Denote by $\Der(\mathfrak{g}, V)$ the
vector space of all derivations, ${\rm Inn}(\mathfrak{g}, V)$ the vector
space of all inner derivations. Set
$${\rm H}^{1}(\mathfrak{g}, V) =\Der(\mathfrak{g}, V) /{\rm Inn}(\mathfrak{g}, V).$$
Denote by $\Der(\mathfrak{g})$ the derivation algebra of
$\mathfrak{g}$, ${\rm Inn}(\mathfrak{g})$  the vector space of all inner
derivations of $\mathfrak{g}$.

In this section, we shall determine all the derivations of the Lie
algebra $\Wgab$.

\begin{lem}\label{P2.1}
For the Lie algebra $\Wgab$ defined in (\ref{defi1})-(\ref{defi2}), we have\\
$(1)$\quad As Lie algebras, $\Wgab\simeq {\rm\bf W^g}(a+k,b)$ for
any $k\in\ 2Z$.\\
$(2)$\quad  $\Wgab$ is perfect.\\
$(3)$\quad  the center of $\Wgab$ is
$$
{\rm Cent}(\Wgab)=\begin{cases} &\C I_0,\quad\text{if}\ (a,b)=(0,0),\\
&0,\quad\text{otherwises}.
\end{cases}
$$
$(4)$\quad $\Wgab$ has a natural $\frac{1}{2}\Z$-grading defined by
$$\Wgab=\bigoplus\limits_{m\in\Z}\Wgab_{\frac{m}{2}}=(\bigoplus\limits_{m\in\Z}\Wgab_{m})\bigoplus(\bigoplus\limits_{m\in\Z}\Wgab_{m+\frac{1}{2}}),$$
where $\Wgab_{m}=\C L_{m}\bigoplus\C I_{m},\Wgab_{m+\frac{1}{2}}=\C
Y_{m+\frac{1}{2}}$.
\end{lem}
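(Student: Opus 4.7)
The plan is to verify the four assertions by direct manipulation of the defining brackets (\ref{defi1})--(\ref{defi2}), with the substantive work concentrated in parts (2) and (3). For part (1) I would define $\phi_k\colon{\rm\bf W^g}(a+k,b)\to\Wgab$ on generators by $L_m\mapsto L_m$, $I_n\mapsto I_{n+k}$, $Y_{n+1/2}\mapsto Y_{n+k/2+1/2}$; the parity hypothesis $k\in 2\Z$ is exactly what keeps $Y_{n+k/2+1/2}$ in the basis. Substituting into each of the five nontrivial bracket formulas, the shift of $a$ by $k$ on the source side cancels the index shift on the target side, so $\phi_k$ is a Lie algebra homomorphism whose inverse is given by the opposite shift. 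Part (4) is then essentially by inspection: every nonzero bracket produces an element in the expected $\frac12\Z$-degree---$[L_m,L_n]$ in degree $m+n$, $[L_m,Y_{n+1/2}]$ in degree $m+n+\tfrac12$, $[Y_{m+1/2},Y_{n+1/2}]$ in degree $m+n+1$---while the other brackets are zero and hence lie in any homogeneous component.

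For part (2), I would show that every basis vector of $\Wgab$ lies in the derived ideal. The identities $[L_m,L_0]=mL_m$ for $m\ne 0$ and $L_0=\tfrac12[L_1,L_{-1}]$ handle the $L_m$. For $I_n$, the relation $[Y_{p+1/2},Y_{q+1/2}]=(p-q)I_{p+q+1}$ produces a nonzero multiple of $I_n$ upon solving $p+q+1=n$ with $p\ne q$. For $Y_{n+1/2}$, I apply $[L_m,Y_{n-m+1/2}]$, whose scalar coefficient in $Y_{n+1/2}$ is an affine function of $m$ with slope $(b-3)/2$; provided this function is not identically zero, some $m\in\Z$ avoids its unique zero and produces $Y_{n+1/2}$.

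For part (3), let $z=\sum a_mL_m+\sum b_nI_n+\sum c_kY_{k+1/2}$ (finite sum) be central. The condition $[L_0,z]=0$ forces $ma_m=0$, $(n+a)b_n=0$ and $(k+\tfrac{1+a}{2})c_k=0$, so $z$ is supported on at most $L_0$, on a single $I_{-a}$ (which requires $a\in\Z$), and on a single $Y_{k+1/2}$ with $2k+1+a=0$ (which requires $a$ odd). Applying $[L_1,z]=0$ eliminates the $L_0$ contribution; bracketing the putative $Y$-term against other $Y_{p+1/2}$ via $[Y_{k+1/2},Y_{p+1/2}]=(k-p)I_{k+p+1}$ yields nonzero $I$-terms for almost all $p$, killing the $Y$-coefficient. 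The remaining $I_{-a}$ is central precisely when $[L_m,I_{-a}]=-bmI_{m-a}$ vanishes for all $m$, i.e.\ when $b=0$; invoking part (1) to normalize the relevant class to $(a,b)=(0,0)$ leaves $\C I_0$ as the center.

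The main obstacle I anticipate is the case analysis around the degenerate values of $(a,b)$. In part (2), the affine coefficient in the $L$--$Y$ bracket degenerates when $b=3$ with $a$ odd, and then a single $Y_{n+1/2}$ cannot be produced from $[L,Y]$ alone; this sub-case must be treated by hand. In part (3), the integer and parity conditions on $a$ flag stray candidate central elements whose elimination requires careful pairing of $z$ with appropriately chosen $Y_{p+1/2}$'s; once the combinatorics is straight, the remaining cases reduce via part (1). Apart from these degeneracies, all four assertions are straightforward substitutions in the defining relations.
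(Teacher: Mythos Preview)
The paper offers no argument beyond ``straightforward from the definition,'' so your outline is necessarily the more detailed of the two; for parts (1) and (4) it is correct and nothing further is needed.

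For part (2), however, you identify but do not close a real gap. When $b=3$ the coefficient of $Y_{n+1/2}$ in $[L_m,Y_{n-m+1/2}]$ collapses to the constant $-(n+\tfrac{1+a}{2})$, and for $a$ an odd integer this vanishes at $n=-\tfrac{1+a}{2}$. You write that this sub-case ``must be treated by hand,'' but no treatment exists: the only brackets with output in the $Y$-span are of the form $[L,Y]$, so $Y_{-a/2}$ genuinely lies outside the derived subalgebra and $\Wgab$ is \emph{not} perfect when $b=3$ and $a\in 2\Z+1$. The lemma as printed needs that exclusion, and your argument cannot close without it. A parallel issue appears in part (3): your computation correctly shows that the sole surviving candidate for the center is $I_{-a}$ with $a\in\Z$ and $b=0$, but your last step---normalizing to $(0,0)$ via part (1)---only shifts $a$ by \emph{even} integers. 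If $a$ is odd you land at $(1,0)$, where $I_{-1}$ is central since $[L_m,I_{-1}]=-(-1+1+0\cdot m)I_{m-1}=0$. Thus the center is $\C I_{-a}$ whenever $a\in\Z$ and $b=0$, not only at $(0,0)$; your normalization cannot deliver the statement exactly as written.
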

\begin{proof}
It is straightforward to prove the lemma by the definition of
$\Wgab$.
\end{proof}

\begin{remark}\label{R}
  By $(1)$ of Lemma \ref{P2.1}, we may assume that
$a=0$ or $a=1$, if $a\in\Z$.
\end{remark}

By Proposition 1.1 in \cite{F}, we have the following lemma.
\begin{lem}\label{L3.1}
$$\Der(\Wgab)=\bigoplus\limits_{n\in\Z}\Der(\Wgab)_{\frac{n}{2}},$$
where $\Der(\Wgab)_{\frac{n}{2}}\Wgab_{\frac{m}{2}}\subseteq
\Wgab_{\frac{m+n}{2}}$ for all $m,n\in\Z$.
\end{lem}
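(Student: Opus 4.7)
The plan is to invoke the general principle (Proposition 1.1 of \cite{F}) that for a $G$-graded Lie algebra with $G$ an abelian group whose grading is an honest direct sum, the derivation algebra inherits a $G$-grading. Here $G=\frac{1}{2}\Z$ and the grading $\Wgab=\bigoplus_{m\in\Z}\Wgab_{m/2}$ was already established in Lemma \ref{P2.1}(4); the bracket respects the grading by inspection of (\ref{defi1})--(\ref{defi2}). So the general result applies. If I wanted to supply a self-contained argument, I would proceed as follows.

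Given $D\in\Der(\Wgab)$ and a homogeneous element $x\in\Wgab_{m/2}$, the image $D(x)$ lies in $\Wgab$, so by the direct-sum decomposition I may write $D(x)=\sum_{k}d_k(x)$ with $d_k(x)\in\Wgab_{k/2}$ and only finitely many $d_k(x)$ nonzero. For each $n\in\Z$ define $D_{n/2}\colon\Wgab\to\Wgab$ by $D_{n/2}(x):=d_{(m+n)/2}(x)$ for homogeneous $x\in\Wgab_{m/2}$ and extend linearly. Then $D_{n/2}(\Wgab_{m/2})\subseteq\Wgab_{(m+n)/2}$ by construction, and for any fixed $x$ only finitely many $D_{n/2}(x)$ are nonzero; moreover $D(x)=\sum_{n}D_{n/2}(x)$.

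The core step is to check that each $D_{n/2}$ is itself a derivation. For homogeneous $x\in\Wgab_{m/2}$ and $y\in\Wgab_{l/2}$, one has $[x,y]\in\Wgab_{(m+l)/2}$, so applying $D$ and projecting onto $\Wgab_{(m+l+n)/2}$ yields $D_{n/2}([x,y])$ on the left. On the right, $[D(x),y]=\sum_{k}[D_{k/2}(x),y]$ with $[D_{k/2}(x),y]\in\Wgab_{(m+l+k)/2}$, whose projection onto $\Wgab_{(m+l+n)/2}$ is $[D_{n/2}(x),y]$; similarly $[x,D(y)]$ projects to $[x,D_{n/2}(y)]$. Thus the Leibniz identity for $D$ descends componentwise, and by bilinearity $D_{n/2}\in\Der(\Wgab)$. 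Finally, the sum $\sum_n\Der(\Wgab)_{n/2}$ is direct because the homogeneous components of a derivation are uniquely pinned down by projecting its value on each homogeneous input.

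The only real obstacle is bookkeeping: one must verify that the formal sum $\sum_n D_{n/2}$ makes sense as an operator on $\Wgab$ and actually reproduces $D$. This is guaranteed by the fact that the grading is a genuine direct sum, so every $D(x)$ has only finitely many homogeneous components, preventing any convergence issue. No deeper structural input about $\Wgab$ is required beyond Lemma \ref{P2.1}(4) and the graded nature of the bracket.
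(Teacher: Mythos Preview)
Your primary approach---invoking Proposition 1.1 of \cite{F}---is exactly what the paper does, and is correct.

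Your self-contained alternative, however, has a genuine gap. You show that $D=\sum_n D_{n/2}$ where, for each fixed homogeneous $x$, only finitely many $D_{n/2}(x)$ are nonzero. But the statement $\Der(\Wgab)=\bigoplus_n\Der(\Wgab)_{n/2}$ asserts something stronger: that only finitely many of the maps $D_{n/2}$ are nonzero globally. What you have proved is only an embedding $\Der(\Wgab)\hookrightarrow\prod_n\Der(\Wgab)_{n/2}$ into the direct product, not the direct sum. The missing ingredient is precisely the hypothesis in Farnsteiner's result: $\Wgab$ is finitely generated as a Lie algebra by homogeneous elements (a handful of $L_m$'s together with one or two $I_n$'s and $Y_{k+1/2}$'s suffice). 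Once you know $\Wgab$ is generated by $x_1,\ldots,x_r$ homogeneous, a derivation $D$ is determined by $D(x_1),\ldots,D(x_r)$; the finitely many degrees occurring among these images then bound the set of $n$ for which $D_{n/2}\neq 0$. Without this step the argument is incomplete, and your final paragraph about ``bookkeeping'' does not address it---pointwise finiteness is not the issue, global finiteness is.
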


\qed
\begin{lem}\label{L3.2}  For any $0\neq n\in\Z, a\neq 1,$ we have
${\rm H}^{1}(\Wgab_{0},\Wgab_{\frac{n}{2}})=0.$
\end{lem}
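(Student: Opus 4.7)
The plan is to exploit the small size of $\Wgab_0 = \C L_0 \oplus \C I_0$, which is a two-dimensional Lie algebra whose only nontrivial bracket is $[L_0, I_0] = -a I_0$. A derivation $D : \Wgab_0 \to \Wgab_{n/2}$ is specified by the pair $(D(L_0), D(I_0))$, subject to the single compatibility relation
\begin{equation*}
I_0.D(L_0) - L_0.D(I_0) = a\,D(I_0),
\end{equation*}
obtained by applying $D$ to $[L_0, I_0] = -a I_0$. To conclude that $D$ is inner I will seek $u \in \Wgab_{n/2}$ satisfying $D(L_0) = L_0.u$ and $D(I_0) = I_0.u$, which reduces to solving a linear system of at most two equations in at most two unknowns.

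The argument naturally splits by the parity of $n$. If $n = 2m$ is a nonzero even integer, then $\Wgab_{n/2} = \C L_m \oplus \C I_m$. Writing $D(L_0) = \alpha L_m + \beta I_m$ and $D(I_0) = \gamma L_m + \delta I_m$, and using the brackets (\ref{defi1})--(\ref{defi2}) to expand the compatibility relation, yields a pair of scalar equations in $\alpha, \beta, \gamma, \delta$. The ansatz $u = p L_m + q I_m$ then leads to a consistent linear system with solution $p = -\alpha/m$, $q = -\beta/(m+a)$, provided $m \neq 0$ and $m + a \neq 0$. If $n$ is odd, $\Wgab_{n/2} = \C Y_{n/2}$ is one-dimensional; since $I_0$ annihilates $Y_{n/2}$ while $L_0$ acts by the scalar $-\tfrac{n+a}{2}$, a parallel but simpler analysis delivers innerness provided $a \neq n$ and $n + a \neq 0$.

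The only obstacle is verifying that the exceptional equalities $a = \pm m$ (even case) and $a = \pm n$ (odd case) are precluded by the hypothesis. By Remark \ref{R}, if $a \in \Z$ we may normalise to $a \in \{0, 1\}$, and the hypothesis $a \neq 1$ then leaves only $a = 0$; the exceptional equalities immediately force $m = 0$ or $n = 0$, both contradicting $n \neq 0$. If $a \notin \Z$, no integer can equal $\pm a$, so the exceptional values never arise. This shows every derivation $D : \Wgab_0 \to \Wgab_{n/2}$ is inner, yielding $H^1(\Wgab_0, \Wgab_{n/2}) = 0$.
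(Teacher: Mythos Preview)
Your proof is correct and follows essentially the same approach as the paper: split by parity of $n$, write the derivation in coordinates, use the single relation $D[L_0,I_0]=-aD(I_0)$ to constrain the coefficients, and then exhibit an explicit element $u$ (the paper's $E_m$ or $E_{m+1/2}$) realising $D$ as inner. The only difference is cosmetic: the paper also works through the excluded case $a=1$ inside the same proof (finding the outer derivations that appear there), information it uses later, whereas you stay strictly within the hypothesis $a\neq 1$.
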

\begin{proof}
We have to prove $${\rm H}^{1}(\Wgab_{0},\Wgab_{m})=0, \; \forall
m\in\Z, m\neq 0,$$ and $${\rm
H}^{1}(\Wgab_{0},\Wgab_{m+\frac{1}{2}})=0, \; \forall m\in\Z.$$
(1)For any nonzero integer $m$, let
$\vp:\Wgab_{0}\longmapsto\Wgab_{m}$ be a derivation, then we may
assume
$$\vp(L_{0})=a_{1}L_{m}+b_1I_{m}, \quad\quad\; \vp(I_{0})=a_{2}L_{m}+b_2I_{m},$$
where $a_{i}, b_{i}\in\C, i=1, 2$. Because
$\vp[L_{0},I_{0}]=[\vp(L_{0}), I_{0}]+[L_{0}, \vp(I_{0})]$, we get
\begin{equation}\label{q48}
a_2(m-a)=0, a_1(a+bm)+b_{2}m=0.\end{equation} Since $a=0$ or $a=1$ or $a\not\in\Z,$ we
have $b_2=-\frac{a_1(a+bm)}{m}$ and the following several cases. \\
Case1. $a=0$ or $a\not\in\Z$. We have $a_{2}=0$ and
$$\vp(L_{0})=a_{1}L_{m}+b_1I_{m}, \quad\quad\; \vp(I_{0})=-\frac{a_1(a+bm)}{m}I_{m}.$$
Set $E_{m}=-\dis\frac{a_{1}}{m}L_{m}-\dis\frac{b_1}{m+a}I_{m}$, we have
$\vp(L_{0})=[L_{0}, E_{m}]$ and $\vp(I_{0})=[I_{0}, E_{m}]$, which
suggests  that $\vp\in Inn(\Wgab_{0},\Wgab_{m})$. \\
Case2. $a=1.$ \\
Subcase2.1: $m\neq 1$. By (\ref{q48}), we have $a_2=0.$\\
 If $m\neq -1$. Set $E_{m}=-\dis\frac{a_{1}}{m}L_{m}-\dis\frac{b_1}{m+1}I_{m}$, we also have
 $$\vp(L_{0})=[L_{0}, E_{m}]\quad\quad\; \vp(I_{0})=[I_{0}, E_{m}].$$ So $\vp\in Inn(\Wgab_{0},\Wgab_{m})$.\\
 If $m=-1$. Set $E_{-1}=-a_{1}L_{-1}$, we have $$\vp(L_{0})=adE_{-1}(L_0)+b_1I_{-1}\quad\quad\; \vp(I_{0})=adE_{-1}(I_0).$$
Subcase2.2: $m=1$. By (\ref{q48}), we have $a_2\in\C$ and
$$\vp(L_{0})=a_{1}L_{1}+b_1I_{1}, \quad\quad\; \vp(I_{0})=a_2L_1-a_1(1+b)I_{1}.$$
Set $E_{1}=-a_{1}L_{1}-\dis\frac{b_1}{2}I_{1}$, we have $$\vp(L_{0})=ad(-E_{1})(L_0)\quad\quad\; \vp(I_{0})=ad(-E_{1})(I_0)+a_2L_{1}.$$
(2) For all
$m\in\Z$, let $\vp:\Wgab_{0}\longmapsto\Wgab_{m+\frac{1}{2}}$ be a
derivation, then we may assume
$$\vp(L_{0})=a_{1}Y_{m+\frac{1}{2}}, \quad\quad\; \vp(I_{0})=b_{1}Y_{m+\frac{1}{2}},$$
where $a_{1}, b_{1}\in\C$. Since $\vp[L_{0},I_{0}]=[\vp(L_{0}),
I_{0}]+[L_{0}, \vp(I_{0})]$, we have
$$\vp(-aI_0)=-ab_1Y_{m+\frac{1}{2}}=[L_{0},b_1Y_{m+\frac{1}{2}}]=-b_1(m+\frac{1+a}{2})Y_{m+\frac{1}{2}},$$
then \begin{equation}\label{q49} b_1[\frac{a}{2}-(m+\frac{1}{2})]Y_{m+\frac{1}{2}}=0.\end{equation}
Case1. $a=0$ or $a\not\in\Z$. By (\ref{q49}), we have $b_1=0$ and
$$\vp(L_{0})=a_{1}Y_{m+\frac{1}{2}}, \quad\quad\; \vp(I_{0})=0$$
Set
$E_{m+\frac{1}{2}}=\dis\frac{a_1}{m+\dis\frac{1+a}{2}}Y_{m+\frac{1}{2}}$, we have
$$\vp(L_{0})=a_{1}Y_{m+\frac{1}{2}}=[E_{m+\frac{1}{2}}, L_0], \quad
\vp(I_{0})=0=[E_{m+\frac{1}{2}}, I_0].$$ So $\vp\in
Inn(\Wgab_{0},\Wgab_{m+\frac{1}{2}})$. \\
Case2. $a=1$. By (\ref{q49}), if $m\neq 0$, we have $b_1=0$ and
$$\vp(L_{0})=a_{1}Y_{m+\frac{1}{2}}, \quad\quad\; \vp(I_{0})=0$$
If $m\neq -1$, set $E_{m+\frac{1}{2}}=\dis\frac{a_1}{m+1}Y_{m+\frac{1}{2}}$, we have
$$\vp(L_{0})=[E_{m+\frac{1}{2}}, L_0]=adE_{m+\frac{1}{2}}(L_0), \quad
\vp(I_{0})=0=adE_{m+\frac{1}{2}}(I_0).$$
If $m=-1$. By the relations of brackets for $\Wgab$, we know $\vp$ is an outer derivation.\\
By (\ref{q49}), if $m=0$, we have $b_1\in\C$ and
$$\vp(L_{0})=a_{1}Y_{\frac{1}{2}}, \quad\quad\; \vp(I_{0})=b_1Y_{\frac{1}{2}}.$$
Set $E_{\frac{1}{2}}=a_1Y_{\frac{1}{2}}$, we have
$$\vp(L_{0})=[E_{\frac{1}{2}}, L_0]=adE_{\frac{1}{2}}(L_0), \quad
\vp(I_{0})=adE_{\frac{1}{2}}(I_0)+b_1Y_{\frac{1}{2}}.$$This completes the proof of
lemma.
\end{proof}

\begin{lem}\label{L3.3}
$\Hom_{\Wgab_{0}}(\Wgab_{\frac{m}{2}}, \Wgab_{\frac{n}{2}})=0$ for
all $m,n\in\Z$, $m\neq n, a\neq 1.$
\end{lem}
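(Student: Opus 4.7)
The plan is to exploit the semisimple action of $L_0\in\Wgab_0$ on each graded piece. From the defining brackets (\ref{defi1})--(\ref{defi2}), $L_0$ acts on the basis vectors $L_k$, $I_k$, and $Y_{k+1/2}$ with respective eigenvalues $-k$, $-(k+a)$, and $-(k+\frac{1+a}{2})$. Any $\Wgab_0$-module homomorphism $\phi:\Wgab_{m/2}\to\Wgab_{n/2}$ must preserve these eigenvalues, so the proof reduces to showing that the $L_0$-weight multisets on the two pieces are disjoint whenever $m\neq n$ and $a\neq 1$.

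I would split into cases by the parity of $m$ and $n$. If both are odd, each piece is one-dimensional and spanned by a single $Y$-vector, and preservation of the (single) $L_0$-eigenvalue immediately forces $m=n$, a contradiction. If both are even, write $\phi(L_{m/2})=\alpha L_{n/2}+\beta I_{n/2}$ and $\phi(I_{m/2})=\gamma L_{n/2}+\delta I_{n/2}$; equating $L_0$-eigenvalues coefficient by coefficient yields $(m-n)\alpha=0$, $(m-n)\delta=0$, $(m-n-a)\beta=0$, and $(m-n+a)\gamma=0$. The first two kill $\alpha$ and $\delta$ at once, while $\beta$ and $\gamma$ vanish unless $a=\pm(n-m)$, a nonzero integer. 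In the remaining mixed-parity case, say $m$ even and $n$ odd, both $L_{m/2}$ and $I_{m/2}$ must map to scalar multiples of $Y_{n/2}$, and matching $L_0$-eigenvalues forces $a=m-n$ or $a=n-m$, once again nonzero integers; the opposite parity assignment is symmetric.

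To conclude, I would invoke Remark~\ref{R}: up to isomorphism one may assume $a\notin\Z$ or $a\in\{0,1\}$, and with the hypothesis $a\neq 1$ we are left with $a\notin\Z$ or $a=0$, so $a$ cannot equal any nonzero integer. Every exceptional coefficient above therefore vanishes and $\phi=0$. The only real subtlety, and the point where the hypothesis $a\neq 1$ enters decisively, is the mixed-parity analysis: $\frac{1+a}{2}$ is an integer precisely when $a$ is odd, and among the representatives supplied by Remark~\ref{R} the one such value is $a=1$, which is exactly where the integer and half-integer $L_0$-weight lattices align and genuine nonzero maps appear. I do not anticipate any serious obstacle, as the $I_0$-equivariance turns out not to be needed once the $L_0$-weight separation is in place.
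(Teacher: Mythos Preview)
Your proposal is correct and takes essentially the same approach as the paper: both split into parity cases, use only the $L_0$-eigenvalue constraints (never the $I_0$-action, as you note), and then invoke Remark~\ref{R} to exclude the exceptional nonzero-integer values of $a$. The paper organizes the computation as four explicit cases (\ref{L1})--(\ref{L4}) and also records what happens when $a=1$ for later use, but the underlying argument is identical to yours.
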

\begin{proof}
We have to consider the following four
identities:\begin{eqnarray}\label{L1}\Hom_{\Wgab_{0}}(\Wgab_{m}, \Wgab_{n})=0, \ \ m\neq n, m, n\in\Z;\\
\label{L2}\Hom_{\Wgab_{0}}(\Wgab_{m}, \Wgab_{n+\frac{1}{2}})=0, \quad\quad\ m, n\in\Z;\\
\label{L3}\Hom_{\Wgab_{0}}(\Wgab_{m+\frac{1}{2}}, \Wgab_{n})=0, \quad\quad\ m, n\in\Z;\\
\label{L4}\Hom_{\Wgab_{0}}(\Wgab_{m+\frac{1}{2}}, \Wgab_{n+\frac{1}{2}})=0, m\neq n, m, n\in\Z.\end{eqnarray}

For integers $m\neq n$, let $f\in
\Hom_{\Wgab_{0}}(\Wgab_{\frac{m}{2}}, \Wgab_{\frac{n}{2}})$, then
for $E_0\in\Wgab_{0}, \\ E_{m}\in\Wgab_{\frac{m}{2}}$, we have
\begin{equation}\label{30}
f([E_{0}, E_{\frac{m}{2}}])=[E_{0}, f(E_{\frac{m}{2}})].
\end{equation}
(1)It follows from $f([L_{0}, L_{m}])=[L_{0}, f(L_{m})]$ that
$f(-mL_{m})=[L_{0}, f(L_{m})]$. Assume $f(L_{m})=a_1L_{n}+b_1I_{n}$,
we have $-ma_1L_{n}-mb_1I_{n}=-a_1nL_{n}-b_1nI_{n}-b_1aI_{n}$. Since $m\neq n$, we get
\begin{equation}\label{q50}a_1=0,\quad\quad\ (m-n-a)b_1=0.\end{equation}
For $a=0$ or $a\not\in\Z,$ we have $b_1=0$ and
$f(L_{m})=0$. On the other hand, we have $f([L_{0}, I_{m}])=[L_{0},
f(I_{m})]$. Similarly, we have  $f(I_{m})=0$. Therefore,
$\Hom_{\Wgab_{0}}(\Wgab_{m}, \Wgab_{n})=0$.\\
For $a=1$. By (\ref{q50}), we get $$b_1=0, \ \ \ m-n\neq 1; \ \ \ b_1\in\C, \ \ \ m-n=1.$$ So $f(L_m)=b_1I_{m-1}$. Consequently (\ref{L1}) holds.\\
(2)By $f([L_{0}, L_{m}])=[L_{0}, f(L_{m})]$ and $f([L_{0}, I_{m}])=[L_{0}, f(I_{m})]$ we have
\begin{eqnarray}\label{L5}m f(L_{m})=(n+\frac{1+a}{2})f(L_{m}),\\
\label{L6}(m+a)f(I_{m})=(n+\frac{1+a}{2})f(I_{m}).\end{eqnarray}
For $a=0$ or $a\not\in\Z$, obviously, $f(L_{m})=0$ and $f(I_{m})=0$.\\
For $a=1$, by (\ref{L5}) and (\ref{L6}), we have $$f(L_{m})=0, \ \ m-n\neq 1; \ \ \ f(I_{m})=0, \ \ m\neq n.$$
Then
$$\Hom_{\Wgab_{0}}(\Wgab_{m}, \Wgab_{n+\frac{1}{2}})=0,\ \ \
a\neq 1,$$which shows (\ref{L2}).\\
(3) By (\ref{30}), we have $f([L_{0},
Y_{m+\frac{1}{2}}])=[L_{0}, f(Y_{m+\frac{1}{2}})]$. Assume
$f(Y_{m+\frac{1}{2}})=x_{n}L_{n}+y_{n}I_{n}$, we have\begin{equation}\label{L7}
-(m+\frac{1+a}{2})(x_{n}L_{n}+y_{n}I_{n})=-x_{n}nL_{n}-y_{n}(n+a)I_{n}.\end{equation}
For $a=0$ or $a\not\in\Z$, we have $x_{n}=0, y_{n}=0$. Then $f(Y_{m+\frac{1}{2}})=0$. For $a=1$, by (\ref{L7}), we have
$$(m+1-n)x_{n}=0, \ \ \ (m-n)y_{n}=0.$$ So $$x_{n}=0, \ \ \ m-n\neq -1; \ \ \ y_{n}=0, \ \ \ m\neq n$$
Therefore $$\Hom_{\Wgab_{0}}(\Wgab_{m+\frac{1}{2}}, \Wgab_{n})=0,
a\neq 1.$$ This shows (\ref{L3}).\\
(4) Assume
$f(Y_{m+\frac{1}{2}})=x_{n}Y_{n+\frac{1}{2}}$, by $f([L_{0},
Y_{m+\frac{1}{2}}])=[L_{0}, f(Y_{m+\frac{1}{2}})]$, we have
$$(m+\frac{1+a}{2})f(Y_{m+\frac{1}{2}})=(n+\frac{1+a}{2})f(Y_{m+\frac{1}{2}}).$$
Since $m\neq n$, we get $f(Y_{m+\frac{1}{2}})=0.$ So
$$\Hom_{\Wgab_{0}}(\Wgab_{m+\frac{1}{2}}, \Wgab_{n+\frac{1}{2}})=0,
m\neq n.$$ This proves (\ref{L4}).
\end{proof}

By Lemma \ref{L3.2}-\ref{L3.3} and Proposition 1.2 in \cite{F}, we
have the following Lemma.

\begin{lem}\label{L3.4}\begin{eqnarray*}
\Der(\Wgab,\Wgab) & = & \Der(\Wgab)_{0}+Inn(\Wgab)+\Der(\Wgab)_{1}\\
& & {}+\Der(\Wgab)_{-1}+\Der(\Wgab)_{\frac{1}{2}}+\Der(\Wgab)_{-\frac{1}{2}}.
\end{eqnarray*}
\end{lem}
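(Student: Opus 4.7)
The plan is to combine the graded decomposition $\Der(\Wgab)=\bigoplus_{n\in\Z}\Der(\Wgab)_{\frac{n}{2}}$ from Lemma \ref{L3.1} with Proposition 1.2 of \cite{F}, which reduces inneness of a graded derivation of nonzero degree $\frac{n}{2}$ to the vanishing of two pieces of data: a cohomology class in ${\rm H}^{1}(\Wgab_0,\Wgab_{\frac{n}{2}})$ controlling the restriction to $\Wgab_0$, and a family of $\Wgab_0$-equivariant maps in $\Hom_{\Wgab_0}(\Wgab_{\frac{m}{2}},\Wgab_{\frac{m+n}{2}})$ controlling the behaviour on the remaining graded pieces. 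Lemmas \ref{L3.2} and \ref{L3.3} supply exactly these two vanishings outside a small exceptional set of degrees.

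Concretely, I would fix $0\neq n\in\Z$ and $D\in\Der(\Wgab)_{\frac{n}{2}}$ and proceed in two steps. First, restriction to $\Wgab_0=\C L_0\oplus\C I_0$ yields a $1$-cocycle on $\Wgab_0$ valued in $\Wgab_{\frac{n}{2}}$; Lemma \ref{L3.2} shows this class is a coboundary (outside the exceptional degrees), so there is $v\in\Wgab_{\frac{n}{2}}$ with $D(x)=[x,v]$ for all $x\in\Wgab_0$. Replacing $D$ by $D-\ad v$, one may assume $D|_{\Wgab_0}=0$. Second, the derivation identity applied to $x\in\Wgab_0$, $y\in\Wgab_{\frac{m}{2}}$ collapses to $D[x,y]=[x,D(y)]$, which is precisely the statement that each restriction $D|_{\Wgab_{\frac{m}{2}}}:\Wgab_{\frac{m}{2}}\to\Wgab_{\frac{m+n}{2}}$ is $\Wgab_0$-equivariant. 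Lemma \ref{L3.3} then annihilates every such map (again outside the exceptional degrees), forcing the modified $D$ to be zero and hence the original $D=\ad v\in{\rm Inn}(\Wgab)$.

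Reading off the case analyses carried out in Lemmas \ref{L3.2} and \ref{L3.3}, the only degrees where either of the two vanishings can fail are $\frac{n}{2}\in\{\pm\frac{1}{2},\pm 1\}$: these come from the $a=1$ branches of the proofs, namely the integer cases $m=\pm 1$ and the half-integer cases $m=0,-1$. Thus $\Der(\Wgab)_{\frac{n}{2}}\subseteq{\rm Inn}(\Wgab)$ whenever $n\notin\{0,\pm 1,\pm 2\}$, and summing over $n$ via Lemma \ref{L3.1} produces exactly the decomposition stated in the lemma.

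The main obstacle is bookkeeping rather than a conceptual difficulty: one must verify that the exceptional degrees arising in Lemmas \ref{L3.2} and \ref{L3.3} are confined to exactly $\pm\frac{1}{2}$ and $\pm 1$, with no further outer contributions hidden elsewhere in the $a=1$ case analysis, and one must check that Farnsteiner's proposition, stated in \cite{F} for integer-graded Lie algebras, transposes without modification to the $\frac{1}{2}\Z$-graded setting used here (which it does, $\frac{1}{2}\Z$ being a torsion-free abelian group just like $\Z$).
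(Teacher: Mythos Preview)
Your proposal is correct and follows exactly the paper's approach: the paper's proof of this lemma is the single line ``By Lemma \ref{L3.2}--\ref{L3.3} and Proposition 1.2 in \cite{F}'', and your two-step argument (reduce to $D|_{\Wgab_0}=0$ via Lemma \ref{L3.2}, then kill the resulting $\Wgab_0$-equivariant maps via Lemma \ref{L3.3}) is precisely what Farnsteiner's Proposition 1.2 unpacks to. Your identification of the exceptional degrees $\pm\tfrac{1}{2},\pm 1$ from the $a=1$ branches of those two lemmas is also correct and matches the paper's subsequent case-by-case analysis.
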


\begin{lem}\label{L3.6}
$(1)$\quad Up to isomorphism, for
$(a,b)\not\in\{(0,0),(0,1),(0,2)\}$, we have $${\rm H}^{1}(\Wgab_{\frac{m}{2}},
\Wgab_{\frac{m}{2}})=\C \bar{D},$$ where
$$\bar{D}(L_{m})=0, \ \ \ \bar{D}(I_{m})=I_{m},\ \ \ \ \bar{D}(Y_{m+\frac{1}{2}})=Y_{m+\frac{1}{2}},\ \ \  \forall\;m\in\Z.$$
\\
$(2)$\quad  ${\rm H}^{1}(\W^{\rm\bf g}(0, 0)_{\frac{m}{2}}, \W^{\rm\bf g}(0, 0)_{\frac{m}{2}})=\C \bar{D}_{1}\bigoplus \C
\bar{D}_{2}\bigoplus\C \bar{D}_{3},$ where
$$\bar{D}_{1}(L_{m})=m I_{m}, \ \ \ \bar{D}_{1}(I_{m})=0, \ \ \ \bar{D}_{1}(Y_{m+\frac{1}{2}})=0,$$
$$\bar{D}_{2}(L_{m})=(m-1)I_m,\ \ \bar{D}_{2}(I_{m})=0, \ \ \ \bar{D}_{2}(Y_{m+\frac{1}{2}})=0,$$
$$\bar{D}_{3}(L_{m})=0, \ \ \ \bar{D}_{3}(I_{m})=I_{m},\ \ \ \bar{D}_{3}(Y_{m+\frac{1}{2}})=Y_{m+\frac{1}{2}}, \ \ \ \forall\;m\in\Z.$$
\\
$(3)$\quad  ${\rm
H}^{1}(\W^{\rm\bf g}(0,1)_{\frac{m}{2}},\W^{\rm\bf g}(0,1)_{\frac{m}{2}})=\C\bar{D}_{1}\bigoplus\C\bar{D}_{2},$ where
$$\bar{D}_{1}(L_{m})=0, \ \ \bar{D}_{1}(I_{m})=I_{m}, \ \ \bar{D}_1(Y_{m+\frac{1}{2}})=Y_{m+\frac{1}{2}};$$
$$ \bar{D}_{2}(L_{m})=m(m-1)I_{m}, \ \  \bar{D}_{2}(I_{m})=0,\ \ \ \bar{D}_{2}(Y_{m+\frac{1}{2}})=0  \forall\;m\in\Z.$$
\\
$(4)$\quad  ${\rm
H}^{1}(\W^{\rm\bf g}(0,2)_{\frac{m}{2}},\W^{\rm\bf g}(0,2)_{\frac{m}{2}})=\C\bar{D}_{1}\bigoplus\C\bar{D}_{2},$ where
$$\bar{D}_{1}(L_{m})=0, \ \ \bar{D}_{1}(I_{m})=I_{m}, \ \ \bar{D}_1(Y_{m+\frac{1}{2}})=Y_{m+\frac{1}{2}};$$
$$ \bar{D}_{2}(L_{m})=m^{3}I_{m}, \ \  \bar{D}_{2}(I_{m})=0,\ \ \ \bar{D}_{2}(Y_{m+\frac{1}{2}})=0 \  \forall\; m\in\Z.$$
\end{lem}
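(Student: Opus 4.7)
The plan is to parameterize all degree-zero derivations and classify them modulo inner ones. By Lemma~\ref{L3.1}, any $D\in\Der(\Wgab)_0$ preserves each homogeneous component, so
\[
D(L_m)=\lambda_m L_m+\mu_m I_m,\quad D(I_m)=\alpha_m L_m+\beta_m I_m,\quad D(Y_{m+\frac12})=\gamma_m Y_{m+\frac12}
\]
for scalars $\lambda_m,\mu_m,\alpha_m,\beta_m,\gamma_m\in\C$ to be determined. I would substitute this ansatz into the derivation identity $D[x,y]=[Dx,y]+[x,Dy]$ for each defining bracket \eqref{defi1}--\eqref{defi2}, extract recursive functional equations on the five scalar sequences, and then quotient by the degree-zero inner derivations generated by $\ad L_0$ and $\ad I_0$.

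From $[L_m,L_n]$ the $L$-coefficient forces $\lambda_m=m\lambda_1$, which is inner ($\ad(-\lambda_1 L_0)$) and can be absorbed to reach $\lambda_m\equiv 0$. The remaining $I$-coefficient gives the central functional equation
\[
(m-n)\mu_{m+n}=\mu_m(m+a+bn)-\mu_n(n+a+bm).\qquad(\ast)
\]
The bracket $[L_m,I_n]$ yields $(m-n)\alpha_n=-(n+a+bm)\alpha_{m+n}$ and $(n+a+bm)(\beta_{m+n}-\beta_n)=0$; the bracket $[L_m,Y_{n+\frac12}]$ gives $(n+\frac{1-m+a+bm}{2})(\gamma_{m+n}-\gamma_n)=0$; and $[Y_{m+\frac12},Y_{n+\frac12}]=(m-n)I_{m+n+1}$ gives $\alpha_{m+n+1}=0$ and $\beta_{m+n+1}=\gamma_m+\gamma_n$. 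Together these force $\alpha\equiv 0$, $\gamma$ constant, and $\beta$ constant (proportional to $\gamma$). Specialising $(\ast)$ at $n=0$ yields $a\mu_m=\mu_0(a+bm)$, so for $a\neq 0$ every solution of $(\ast)$ is a scalar multiple of $\mu_m=a+bm$, which is inner ($\ad I_0$). Hence in this generic regime the only outer derivation is the scaling $\bar D$ of part~(1).

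The three exceptional cases arise precisely when the $n=0$ specialisation degenerates, namely $a=0$ and $b\in\{0,1,2\}$. I would solve $(\ast)$ via a polynomial ansatz $\mu_m=\sum c_km^k$ in each case: for $(0,0)$ the equation $(m-n)\mu_{m+n}=m\mu_m-n\mu_n$ admits the two independent outer solutions $\mu_m=m$ and $\mu_m=1$, both non-inner since $\ad I_0$ acts trivially here, producing $\bar D_1,\bar D_2$; for $(0,1)$ the equation becomes $(m-n)\mu_{m+n}=(m+n)(\mu_m-\mu_n)$, with $\mu_m=m(m-1)$ as an outer solution modulo the inner $\mu_m=m$, giving $\bar D_2$ of (3); for $(0,2)$ the cubic $\mu_m=m^3$ solves $(\ast)$ modulo the inner $\mu_m=2m$, giving $\bar D_2$ of (4). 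The main obstacle is twofold: first, verifying that each exceptional $\mu$-sequence lifts to a consistent full derivation (checking compatibility with the $Y$-brackets with $\alpha=\beta=\gamma=0$); and second, proving that no further outer polynomial or non-polynomial solutions of $(\ast)$ exist for $a=0$, $b\notin\{0,1,2\}$, which reduces to a degree-by-degree resonance analysis exhibiting $b\in\{0,1,2\}$ as precisely the values where a non-inner polynomial $\mu$ can appear.
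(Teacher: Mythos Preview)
Your proposal is correct and follows essentially the same route as the paper: both set up the identical five-coefficient ansatz for a degree-zero derivation, extract the same system of functional equations from the brackets \eqref{defi1}--\eqref{defi2}, and reduce the classification to solving the key recursion $(\ast)$ for the $L\to I$ coefficient, with the exceptional cases $(0,0),(0,1),(0,2)$ arising exactly where $(\ast)$ acquires extra solutions. The only cosmetic differences are that the paper carries the $\lambda_m=a_{11}^m$ term along and subtracts all inner pieces at the end (rather than absorbing $\ad L_0$ immediately as you do), and it solves $(\ast)$ in the exceptional cases by direct induction on $m$ from the $n=\pm1$ specialisations rather than via your polynomial ansatz; both organisations yield the same case split and the same outer derivations.
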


\begin{proof}
For  $D\in \Der(\Wgab, \Wgab)_{0}$, assume that
$$D(L_{m})=a_{11}^mL_{m}+a_{12}^mI_m, \ \ D(I_{m})=a_{21}^mL_m+a_{22}^mI_{m},\ \ D(Y_{m+\frac{1}{2}})=b^{m+\frac{1}{2}}Y_{m+\frac{1}{2}},$$
for all $m\in\Z$, where $a_{ij}^m, b^{m+\frac{1}{2}}\in\C, i, j=1, 2$. By the definition of
derivation and the product in $\Wgab$, we get
\begin{equation}\label{32}
(m-n)a_{11}^{m+n}=(m-n)a_{11}^m+(m-n)a_{11}^n,
\end{equation}
\begin{equation}\label{q51}
(m-n)a_{12}^{m+n}=(m+a+bn)a_{12}^{m}-(n+a+bm)a_{12}^{n},
\end{equation}
\begin{equation}\label{q52}
-(n+a+bm)a_{21}^{m+n}=(m-n)a_{21}^{n},\end{equation}
\begin{equation}\label{q53}
(n+a+bm)a_{22}^{m+n}=(n+a+bm)a_{11}^{m}+(n+a+bm)a_{22}^{n},
\end{equation}
\begin{equation}\label{q54}
(n+\dis\frac{1-m+a+bm}{2})b^{m+n+\frac{1}{2}}=(n+\dis\frac{1-m+a+bm}{2})(a_{11}^m+b^{n+\frac{1}{2}}),
\end{equation}
\begin{equation}\label{q55}
(m+a+bn)a_{21}^{n}=(n+a+bm)a_{21}^{m},
\end{equation}
\begin{equation}\label{q56}
(n+\dis\frac{1-m+a+bm}{2})a_{21}^{m}=0,
\end{equation}
\begin{equation}\label{q57}
(m-n)a_{21}^{m+n+1}=0,
\end{equation}
\begin{equation}\label{34}
(m-n)a_{22}^{m+n+1}=(m-n)b^{m+\frac{1}{2}}+(m-n)b^{n+\frac{1}{2}},
\end{equation}
for all $m,n\in\Z$. Let $n=0$ in (\ref{32})-(\ref{q53}), we have
\begin{equation}\label{36}
a_{11}^0=0,\end{equation}
\begin{equation}\label{q58}
(a+bm)a_{12}^{0}=aa_{12}^{m},
\end{equation}
\begin{equation}\label{q59}
-(a+bm)a_{21}^{m}=ma_{21}^{0},
\end{equation}
\begin{equation}\label{38}
(a+bm)a_{22}^{m}=(a+bm)(a_{11}^m+a_{22}^{0}).
\end{equation}
On the other hand, let $m=0$ in (\ref{q52}) and (\ref{q56}), we get
\begin{equation}\label{q60}
aa_{21}^n=0, \ \ \ a_{21}^0(n+\frac{1+a}{2})=0
\end{equation}
It follows from (\ref{32}), (\ref{q57}) and (\ref{34}) that
\begin{equation}\label{q61}
a_{11}^{m+n}=a_{11}^{m}+a_{11}^{n}, \ \ \ m\neq n,
\end{equation}
$$a_{21}^{m+n+1}=0, \ \ \ m\neq n,$$
\begin{equation}\label{q62}
 a_{22}^{m+n+1}=b^{m+\frac{1}{2}}+b^{n+\frac{1}{2}}, \ \ \ m\neq n.
\end{equation}Let $m=-n\neq 0$ in (\ref{q61}) and combine (\ref{36}), we have
$$a_{11}^{-m}=-a_{11}^{m}, \ \ \forall m\in\Z.$$
On the other hand, let $n=1$ in (\ref{q61}), we have
\begin{equation}\label{q63}a_{11}^{m+1}=a_{11}^{m}+a_{11}^{1}, \ \ \ m\neq 1.\end{equation}
By induction on $m\in\Z^{+}$ in (\ref{q63}), we get
\begin{equation}\label{q64}a_{11}^{m}=a_{11}^{2}+(m-2)a_{11}^{1}, \ \ \ m\neq 1.\end{equation}
Let $m=4, n=-2$ in (\ref{q61}), we have\begin{equation}\label{q65}a_{11}^2=2a_{11}^1.\end{equation}
Combine (\ref{q64}) and (\ref{q65}), we get $$a_{11}^{m}=ma_{11}^{1}, \ \ \ \forall m\in\Z.$$
Let $m=0$ in (\ref{q57}), we have $a_{21}^{n+1}=0, \ \ n\neq 0$. Moreover let $n=-m\neq 0$ in (\ref{q57}), We also have
$a_{21}^{1}=0$. So we get $$a_{21}^{m}=0, \ \ \ \forall m\in\Z.$$

{\bf Case 1:}  $a\not\in\Z$.   By (\ref{q58}), we have
$a_{12}^{m}=\dis\frac{a+bm}{a}a_{12}^0$ for all $m\in\Z$.

{\bf Subcase 1.1:} If $bm+a\neq 0$ for all $m\in\Z$, By (\ref{38}), we have
$$a_{22}^{m}=a_{11}^{m}+a_{22}^{0}=ma_{11}^{1}+a_{22}^{0}, \ \ \forall m\in\Z$$ .
Let $n=0$ in (\ref{q62}), we have
\begin{equation}\label{q66}b^{m+\frac{1}{2}}=a_{22}^{m+1}-b^{\frac{1}{2}}=(m+1)a_{11}^{1}+a_{22}^{0}-b^{\frac{1}{2}}, \  m\neq 0.\end{equation}
On the other hand, let $n=0, m=1$ in (\ref{q54}) and use (\ref{q66}), we have
\begin{equation}\label{q67}b^{\frac{1}{2}}=b^{1+\frac{1}{2}}-a_{11}^{1}=a_{11}^1+a_{22}^0-b^{\frac{1}{2}}\end{equation}
So $$b^{\frac{1}{2}}=\frac{1}{2}(a_{11}^1+a_{22}^0).$$ Combine (\ref{q66}) and (\ref{q67}), we get
$$b^{m+\frac{1}{2}}=ma_{11}^{1}+b^{\frac{1}{2}}=(m+\frac{1}{2})a_{11}^{1}+\frac{1}{2}a_{22}^{0}, \ \ \forall m\in\Z.$$
Then
$$D(L_{m})=ma_{11}^{1}L_{m}+\dis\frac{a+bm}{a}a_{12}^0I_{m}, \ \ \ D(I_{m})=(ma_{11}^{1}+a_{22}^0)I_{m},$$
$$D(Y_{m+\frac{1}{2}})=[(m+\frac{1}{2})a_{11}^{1}+\frac{1}{2}a_{22}^{0}]Y_{m+\frac{1}{2}},$$
for all $m\in\Z$. Set $E_{0}=-a_{11}^1L_0+\dis\frac{a_{12}^0}{a}I_{0}$, then
$$D(L_{m})=ad E_{0}(L_{m}), \ \ D(I_{m})=ad E_{0}(I_{m})+(a_{22}^{0}-aa_{11}^{1})I_{m},$$
$$D(Y_{m+\frac{1}{2}})=ad E_{0}(Y_{m+\frac{1}{2}})+\frac{1}{2}(a_{22}^{0}-aa_{11}^{1})Y_{m+\frac{1}{2}},$$
for all $m\in\Z$. Let $\bar{D}\in {\rm H}^{1}(\Wgab_{\frac{m}{2}}, \Wgab_{\frac{m}{2}})$ such that
$$\bar{D}(L_{m})=0, \bar{D}(I_{m})=I_{m}, \bar{D}(Y_{m+\frac{1}{2}})=Y_{m+\frac{1}{2}}$$ for all $m\in\Z$, then we
have
$${\rm H}^{1}(\Wgab_{\frac{m}{2}}, \Wgab_{\frac{m}{2}})=\C \bar{D}.$$

{\bf Subcase 1.2:} If  there exists some $\mu\in\Z$ such that
$b\mu+a=0$, then it follows from (\ref{38}) that $$a_{22}^{m}=a_{11}^{m}+a_{22}^{0}=ma_{11}^{1}+a_{22}^{0}, \ \ \  m\neq \mu.$$ Obviously, $\mu\neq
0$, (or else $a=0$, a contradiction.) Letting $m=n=\mu$ in
(\ref{q53}), we get
$$a_{22}^{\mu}=a_{22}^{2\mu}-a_{11}^{\mu}=2\mu a_{11}^{1}+a_{22}^{0}-a_{11}^{\mu}=\mu a_{11}^{1}+a_{22}^{0}.$$
Therefore, we still have $a_{22}^{m}=ma_{11}^{1}+a_{22}^{0}$ for all $m\in\Z$. 
Let $m=\mu, n=-\mu$ in (\ref{q54}) and use (\ref{q66}), we have
$$b^{\frac{1}{2}}=\frac{1}{2}(a_{11}^{1}+a_{22}^{0}).$$ According to (\ref{q66}), we still get $b^{m+\frac{1}{2}}=(m+\frac{1}{2})a_{11}^{1}+\frac{1}{2}a_{22}^0$ for all $m\in\Z$. Then we have
the same result as Subcase 1.1.

{\bf Case 2:}  $a=0$. From (\ref{q58}) we get $ba_{12}^0=0$. Obviously,
$a_{12}^0=0$ when $b\neq 0$. Let $n=-m$ in (\ref{q51}), then we have
\begin{equation}\label{39}
(1-b)[a_{12}^m+a_{12}^{-m}]=2a_{12}^0,\quad\quad\quad\; m\neq 0.
\end{equation}

{\bf Subcase 2.1:} $b=0$. From (\ref{q51}) and (\ref{q53}), we have
\begin{equation}\label{40}
(m-n)a_{12}^{m+n}=ma_{12}^m-na_{12}^n,
\end{equation}
\begin{equation}\label{41}
a_{22}^{m+n}=a_{11}^m+a_{22}^n, \ \ \ \ \ \ \ n\neq 0.
\end{equation}
Let $n=1$ in (\ref{40}) and (\ref{41}) respectively, we have
\begin{equation}\label{43}
(m-1)a_{12}^{m+1}=ma_{12}^m-a_{12}^1.\end{equation}
\begin{equation}
\label{q69}a_{22}^{m+1}=a_{11}^m+a_{22}^1=ma_{11}^1+a_{22}^1, \ \ \ \forall m\in\Z.\end{equation}
On the other hand, let $n=1, m=-1$ in (\ref{41}), we have
\begin{equation}\label{q70}a_{22}^{1}=a_{11}^{1}+a_{22}^0.\end{equation} From (\ref{q69}) and (\ref{q70}),
it is easy to deduce that $$a_{22}^{m}=ma_{11}^1+a_{22}^0, \ \ \ \forall m\in\Z.$$
 It follows from (\ref{39}) that
\begin{equation}\label{42}
a_{12}^{-m}=2a_{12}^0-a_{12}^m,\quad\quad\quad\; \forall\;m\in\Z.
\end{equation}
By induction on $m>1$ in (\ref{43}), we can deduce that
$$a_{12}^m=(m-1)a_{12}^2-(m-2)a_{12}^1, \ \ \ \ \ \ m>0.$$
Set $m=-2$ in (\ref{43}) and use (\ref{42}), then we get
$$a_{12}^0=2a_{12}^1-a_{12}^2.$$
Combine the two identities, we have
$$a_{12}^m=ma_{12}^1-(m-1)a_{12}^0, \ \ \ \forall m\in\Z.$$
Let $n=0, m=2$ in (\ref{q54}) and use (\ref{q66}), we have $$b^{\frac{1}{2}}=\frac{1}{2}(a_{11}^1+a_{22}^0).$$
So we get $$b^{m+\frac{1}{2}}=ma_{11}^{1}+b^{\frac{1}{2}}=(m+\frac{1}{2})a_{11}^{1}+\frac{1}{2}a_{22}^{0}, \ \ \forall m\in\Z.$$
Therefore,
$$D(L_{m})=ma_{11}^1L_m+[ma_{12}^1-(m-1)a_{12}^0]I_{m},$$
$$D(I_{m})=(a_{22}^0+ma_{11}^1)I_{m}, \ \ D(Y_{m+\frac{1}{2}})=[(m+\frac{1}{2})a_{11}^{1}+\frac{1}{2}a_{22}^{0}]Y_{m+\frac{1}{2}},$$
for all $m\in\Z$. Setting $E_0=-a_{11}^1L_0$, we can deduce that
$$D(L_{m})=adE_0(L_m)+[ma_{12}^1-(m-1)a_{12}^0]I_{m},$$
$$D(I_{m})=adE_0(I_m)+a_{22}^0I_{m}, \ \; D(Y_{m+\frac{1}{2}})=adE_0(Y_{m+\frac{1}{2}})+\frac{1}{2}a_{22}^{0}Y_{m+\frac{1}{2}}.$$
Let $\bar{D_{1}}, \bar{D_{2}}, \bar{D_{3}}\in {\rm
H}^{1}(\W^{\rm\bf g}(0,0)_{\frac{m}{2}}, \W^{\rm\bf g}(0,0)_{\frac{m}{2}})$ such that
$$\bar{D}_{1}(L_{m})=m I_{m},  \quad \quad \bar{D}_{1}(I_{m})=0, \quad \quad \bar{D}_1(Y_{m+\frac{1}{2}})=0;$$
$$\bar{D}_{2}(L_{m})=(m-1)I_{m}, \quad\quad \bar{D}_{2}(I_{m})=0, \quad \quad \bar{D}_2(Y_{m+\frac{1}{2}})=0;$$
$$\bar{D}_{3}(L_{m})=0, \quad \quad \quad  \bar{D}_{3}(I_{m})=I_{m}, \quad \quad \bar{D}_3(Y_{m+\frac{1}{2}})=Y_{m+\frac{1}{2}},$$
for all $m\in\Z$, then they are all outer derivations and
$${\rm H}^{1}(\W^{\rm\bf g}(0,0)_{\frac{m}{2}}, \W^{\rm\bf g}(0,0)_{\frac{m}{2}})=\C \bar{D}_{1}\bigoplus \C \bar{D}_{2}\bigoplus\C \bar{D}_{3}.$$
\\
{\bf Subcase 2.2:} $b=1$.  Then $a_{12}^0=0$. By (\ref{q51}) and (\ref{q53}), we have
\begin{equation}\label{44}
(m-n)a_{12}^{m+n}=(m+n)[a_{12}^m-a_{12}^n],\quad\quad\quad\; m,n\in\Z.
\end{equation}
\begin{equation}\label{q71}
a_{22}^{m+n}=a_{11}^m+a_{22}^n,\quad\quad\quad\; m+n\neq 0.
\end{equation}
Letting $n=\pm 1$ respectively in (\ref{44}) and using induction on $m$,   we can deduce
$$a_{12}^m=\dis\frac{m(m-1)}{2}a_{12}^{-1}+\dis\frac{m(m+1)}{2}a_{12}^1=\dis\frac{m(m-1)}{2}[a_{12}^{-1}+a_{12}^1]+ma_{12}^1,$$
for all $m\in\Z$.
It is easy following (\ref{q71}) to see that$$a_{22}^{m}=a_{11}^m+a_{22}^0=ma_{11}^1+a_{22}^0, \ \; m\neq 0.$$
So we get $$a_{22}^{m}=ma_{11}^1+a_{22}^0, \ \ \; \forall m\in\Z.$$
Letting $n=0$ in (\ref{q54}), we get $$b^{m+\frac{1}{2}}=ma_{11}^{1}+b^{\frac{1}{2}}=(m+\frac{1}{2})a_{11}^{1}+\frac{1}{2}a_{22}^{0},$$ for all $m\in\Z.$
Therefore,
$$D(L_{m})=ma_{11}^1L_m+[\dis\frac{m(m-1)}{2}a_{12}^{-1}+\dis\frac{m(m+1)}{2}a_{12}^1]I_{m},$$
$$D(I_{m})=(ma_{11}^1+a_{22}^0)I_{m}, \ \; D(Y_{m+\frac{1}{2}})=[(m+\frac{1}{2})a_{11}^{1}+\frac{1}{2}a_{22}^{0}]Y_{m+\frac{1}{2}}.$$
Set $E_{0}=-a_{11}^1L_0+a_{12}^1I_{0}$, then
$$D(L_{m})=adE_{0}(L_{m})+\dis\frac{m(m-1)}{2}[a_{12}^{-1}+a_{12}^1]I_{m},$$
$$ D(I_{m})=adE_{0}(I_{m})+a_{22}^0I_{m},\ \; D(Y_{m+\frac{1}{2}})=adE_0(Y_{m+\frac{1}{2}})+\frac{1}{2}a_{22}^{0}Y_{m+\frac{1}{2}}.$$
 Let $\bar{D}_{1}, \bar{D}_{2}\in {\rm
H}^{1}(\W^{\rm\bf g}(0,1)_{\frac{m}{2}}, \W^{\rm\bf g}(0,1)_{\frac{m}{2}})$ such that
$$\bar{D}_{1}(L_{m})=0, \ \ \ \ \ \ \bar{D}_{1}(I_{m})=I_{m},\ \ \ \ \bar{D}_1(Y_{m+\frac{1}{2}})=Y_{m+\frac{1}{2}};$$
$$\bar{D}_{2}(L_{m})=m(m-1)I_{m}, \ \ \ \  \bar{D}_{2}(I_{m})=0, \ \ \ \bar{D}_2(Y_{m+\frac{1}{2}})=0,$$
for all $m\in\Z$, then we have
$${\rm H}^{1}(\W^{\rm\bf g}(0,1)_{\frac{m}{2}}, \W^{\rm\bf g}(0,1)_{\frac{m}{2}})=\C\bar{D}_{1}\bigoplus\C\bar{D}_{2}.$$

{\bf Subcase 2.3:} $b\neq 0,1$.  Then we always have
 $a_{12}^{-m}=-a_{12}^m$ from (\ref{39}) for all $m\in\Z$.
Let $n=1$ in (\ref{q51}), we have
$$(m-1)a_{12}^{m+1}=(b+m)a_{12}^m-(bm+1)a_{12}^1.$$
Then\begin{equation}\label{46}
(m-1)[a_{12}^{m+1}-(m+1)a_{12}^1]=(m+b)[a_{12}^m-ma_{12}^1].\end{equation}
Let $m=-2$ in (\ref{46}), we have
$$(b-2)[a_{12}^2-2a_{12}^1]=0.$$
Hence
 $$a_{12}^2=2a_{12}^1,\ \ \ b\neq 2.$$
By induction on $m$ in (\ref{46}),  we can deduce that
$$a_{12}^m=ma_{12}^1, \ \ \ \forall m\in\Z.$$
By (\ref{38}), we have $$a_{22}^m=a_{11}^m+a_{22}^0=ma_{11}^1+a_{22}^0, \ \ \; m\in\Z.$$
Similar to the computations in subcase1.1, we have
$$b^{m+\frac{1}{2}}=ma_{11}^{1}+b^{\frac{1}{2}}=(m+\frac{1}{2})a_{11}^{1}+\frac{1}{2}a_{22}^{0}, \ \ \forall m\in\Z.$$
So
$$D(L_{m})=ma_{11}^1L_m+ma_{12}^1I_{m}, \ \ D(I_{m})=(ma_{11}^1+a_{22}^0)I_{m},$$
$$D(Y_{m+\frac{1}{2}})=[(m+\frac{1}{2})a_{11}^{1}+\frac{1}{2}a_{22}^{0}]Y_{m+\frac{1}{2}}.$$
Set $E_{0}=-a_{11}^1L_0+\dis\frac{a_{12}^1}{b}I_{0}$, then
$$D(L_{m})=ad E_{0}(L_{m}), \ \ D(I_{m})=ad E_{0}(I_{m})+a_{22}^{0}I_{m},$$
$$D(Y_{m+\frac{1}{2}})=adE_0(Y_{m+\frac{1}{2}})+\frac{1}{2}a_{22}^{0}Y_{m+\frac{1}{2}}.$$
Consequently, for $b\neq 0, 1, 2$, we have
$${\rm H}^{1}(\W^{\rm\bf g}(0,b)_{\frac{m}{2}}, \W^{\rm\bf g}(0,b)_{\frac{m}{2}})=\C\bar{D},$$
where $\bar{D}(L_{m})=0, \ \ \bar{D}(I_{m})=I_{m}, \ \ \bar{D}(Y_{m+\frac{1}{2}})=Y_{m+\frac{1}{2}}$ for all $m\in\Z$.

If $b=2$, by induction on $m$ in (\ref{46}), we have
$$a_{12}^m=\frac{m^{3}-m}{6}a_{12}^2-\frac{m^{3}-4m}{3}a_{12}^1=\frac{a_{12}^2-2a_{12}^1}{6}m^{3}-\frac{a_{12}^2-8a_{12}^1}{6}m,$$
for all $m\in\Z$. Then
$$D(L_{m})=ma_{11}^1L_m+(\frac{a_{12}^2-2a_{12}^1}{6}m^{3}-\frac{a_{12}^2-8a_{12}^1}{6}m)I_{m}, \ \ D(I_{m})=(ma_{11}^1+a_{22}^0)I_{m},$$
$$D(Y_{m+\frac{1}{2}})=[(m+\frac{1}{2})a_{11}^{1}+\frac{1}{2}a_{22}^{0}]Y_{m+\frac{1}{2}}.$$
Set $E_{0}=-a_{11}^1L_0-\dis\frac{a_{12}^2-8a_{12}^1}{12}I_{0}$, then
$$D(L_{m})=ad E_{0}(L_{m})+\frac{a_{12}^2-2a_{12}^1}{6}m^{3}I_{m}, \ \ D(I_{m})=ad E_{0}(I_{m})+a_{22}^{0}I_{m},$$
$$D(Y_{m+\frac{1}{2}})=adE_0(Y_{m+\frac{1}{2}})+\frac{1}{2}a_{22}^{0}Y_{m+\frac{1}{2}}.$$
Consequently, we have
$${\rm H}^{1}(\W^{\rm\bf g}(0,2)_{\frac{m}{2}}, \W^{\rm\bf g}(0,2)_{\frac{m}{2}})=\C\bar{D}_{1}\bigoplus\C\bar{D}_{2},$$
where $\bar{D}_{1}(L_{m})=0, \ \ \bar{D}_{1}(I_{m})=I_{m}, \ \; \bar{D}_{1}(Y_{m+\frac{1}{2}})=Y_{m+\frac{1}{2}}; \ \
\bar{D}_{2}(L_{m})=m^{3}I_{m}, \ \ \bar{D}_{2}(I_{m})=0, \ \ \\ \bar{D}_{2}(Y_{m+\frac{1}{2}})=0$ for all
$m\in\Z$.

{\bf Case 3:} $a=1$. Similar to the above discussions for the case $a\not\in\Z$ completely, we only need to take $a=1$ in these discussions
and get the same results as case 1.
\end{proof}
\begin{lem}Up to isomorphism,
$${\rm H}^{1}(\Wgab_{\frac{m}{2}}, \Wgab_{\frac{m}{2}\pm 1})=0.$$\end{lem}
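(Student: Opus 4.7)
The plan is to compute $\Der(\Wgab)_{\pm 1}$ directly, in direct parallel with the degree-$0$ analysis carried out in Lemma~\ref{L3.6}. By grading symmetry it suffices to treat $\Der(\Wgab)_{1}$; the degree $-1$ case is entirely analogous upon replacing $L_{1},I_{1}$ by $L_{-1},I_{-1}$. For $D\in\Der(\Wgab)_{1}$ the grading forces
$$D(L_m)=\alpha_mL_{m+1}+\beta_mI_{m+1},\quad D(I_m)=\gamma_mL_{m+1}+\delta_mI_{m+1},\quad D(Y_{m+\frac{1}{2}})=\epsilon_mY_{m+\frac{3}{2}},$$
for complex scalars indexed by $m\in\Z$.

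Next I would apply the Leibniz rule $D[x,y]=[D(x),y]+[x,D(y)]$ to each of the six families of brackets (\ref{defi1})--(\ref{defi2}) and read off the resulting recurrences, then carry out four successive reductions. The $L$-coefficient of $D[Y_{m+\frac{1}{2}},Y_{n+\frac{1}{2}}]=(m-n)D(I_{m+n+1})$ gives $(m-n)\gamma_{m+n+1}=0$, forcing $\gamma_m\equiv 0$. The $L$-coefficient of $D[L_m,L_n]$ produces $(m-n)\alpha_{m+n}=(m+1-n)\alpha_m+(m-n-1)\alpha_n$, whose specialisation at $n=0$ gives $\alpha_m=(1-m)\alpha_0$; this is exactly $\ad(\alpha_{0}L_{1})(L_m)$, so after subtracting $\ad(\alpha_{0}L_{1})$ from $D$ we may assume $\alpha_m\equiv 0$. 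With $\alpha=\gamma=0$, specialising $D[L_0,I_n]$ and $D[L_0,Y_{n+\frac{1}{2}}]$ gives $(n+a)\delta_n=(n+1+a)\delta_n$ and $(n+\frac{1+a}{2})\epsilon_n=(n+1+\frac{1+a}{2})\epsilon_n$ respectively, each of which forces $\delta_m=\epsilon_m\equiv 0$. Finally, the $I$-coefficient of $D[L_m,L_n]$ yields $(m-n)\beta_{m+n}=(m+1+a+bn)\beta_m-(n+1+a+bm)\beta_n$; setting $n=0$ and invoking Remark~\ref{R} (so $a\in\{0,1\}$ or $a\notin\Z$, whence $1+a\neq 0$) gives $\beta_m=\frac{1+a+bm}{1+a}\beta_0$, which matches $\ad(\frac{\beta_0}{1+a}I_{1})$ on every generator of $\Wgab$.

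Collecting the reductions, $D=\ad(\alpha_{0}L_{1}+\frac{\beta_{0}}{1+a}I_{1})$ is inner, yielding $H^{1}(\Wgab_{\frac{m}{2}},\Wgab_{\frac{m}{2}+1})=0$. The main obstacle I expect lies in the mirror argument for degree $-1$: the analogous normalisation of $\beta^{-}$ asks for division by $1-a$, which fails precisely at $a=1$. In that subcase $\ad(cI_{-1})$ gives only the restricted inner contribution $\beta_m^{-}=cbm$, so one must analyse the full two-variable functional equation $(m-n)\beta_{m+n}^{-}=(m+bn)\beta_m^{-}-(n+bm)\beta_n^{-}$ directly, likely with a further subdivision by the value of $b$ (echoing the exceptional pairs $(0,1)$ and $(0,2)$ singled out in Lemma~\ref{L3.6}), using additional specialisations such as $n=\pm 1$ to eliminate spurious degrees of freedom and either recover the inner form or exhibit the obstruction.
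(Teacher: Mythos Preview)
Your degree-$+1$ argument is correct and complete. The paper takes a shorter route: it notes that a degree-$\pm1$ derivation of $\Wgab$ restricts to a degree-$\pm1$ derivation of the subalgebra $\Wab=\bigoplus_m(\C L_m\oplus\C I_m)$, and invokes \cite{SCY} to conclude that this restriction is already inner. After subtracting the inner part one may assume $\bar D(L_m)=\bar D(I_m)=0$, and then the single relation $\bar D[L_0,Y_{n+\frac12}]=[L_0,\bar D(Y_{n+\frac12})]$ forces $\bar D(Y_{n+\frac12})=0$; this last step is exactly your computation of $\epsilon_m$. So the substance of your proposal is a self-contained reproof of the relevant piece of \cite{SCY} together with that same one-line $Y$-calculation, which is a legitimate alternative.

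The obstacle you flag at degree $-1$ with $a=1$ is, however, a genuine gap in your argument, not a technicality. The functional equation $(m-n)\beta^{-}_{m+n}=(m+bn)\beta^{-}_m-(n+bm)\beta^{-}_n$ you arrive at is precisely equation \eqref{q51} of Lemma~\ref{L3.6} specialised to $a=0$, whose full solution space (worked out there in Subcases~2.1--2.3) is two-dimensional for $b\in\{0,1,2\}$ and one-dimensional otherwise; meanwhile the inner contribution $\ad(dI_{-1})(L_m)=d\,bm\,I_{m-1}$ is at most one-dimensional and vanishes entirely when $b=0$. Thus ``additional specialisations such as $n=\pm1$'' will recover exactly that solution space, not cut it down further, and you cannot finish the degree-$-1$ case at $a=1$ without a genuine case analysis in $b$ matching the inner contributions against these solutions. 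The paper sidesteps all of this by deferring to \cite{SCY}; if you want to keep the self-contained approach you must carry out that analysis explicitly rather than leave it as an expectation.
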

\begin{proof} By \cite{SCY}, $\forall \bar{D}\in {\rm H}^{1}(\Wgab_{\frac{m}{2}}, \Wgab_{\frac{m}{2}+1}),$ we may assume that$$\bar{D}(L_m)=0, \ \ \bar{D}(I_m)=0, \ \ \bar{D}(Y_{m+\frac{1}{2}})=b^{m+1+\frac{1}{2}}Y_{m+1+\frac{1}{2}}.$$
By $\bar{D}([L_m, Y_{n+\frac{1}{2}}])=[\bar{D}(L_m), Y_{n+\frac{1}{2}}]+[L_m, \bar{D}(Y_{n+\frac{1}{2}})]$, we have
\begin{equation}\label{q72}(n+\dis\frac{1-m+a+bm}{2})b^{m+n+1+\frac{1}{2}}=(n+1+\dis\frac{1-m+a+bm}{2})b^{n+1+\frac{1}{2}}
\end{equation}
Let $m=0$ in (\ref{q72}), we get $b^{n+1+\frac{1}{2}}=0$ for all $n\in\Z$. So $\bar{D}(Y_{m+\frac{1}{2}})=0$. Consequently, we get
$${\rm H}^{1}(\Wgab_{\frac{m}{2}}, \Wgab_{\frac{m}{2}+1})=0.$$
For ${\rm H}^{1}(\Wgab_{\frac{m}{2}}, \Wgab_{\frac{m}{2}-1})$, similar to the above discussion, we have the same results.
\end{proof}
\begin{lem}Up to isomorphism,
$${\rm H}^{1}(\Wgab_{\frac{m}{2}}, \Wgab_{\frac{m+1}{2}})=0.$$\end{lem}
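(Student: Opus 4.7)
The plan is to parameterize a general degree-$\tfrac12$ derivation $\bar D$ and use the bracket relations of $\Wgab$ to cut the parameter space down to the inner derivations $\ad(cY_{1/2})$. Since $\bar D$ maps $\Wgab_{m/2}$ into $\Wgab_{(m+1)/2}$, I write
$$\bar D(L_m)=\alpha_m Y_{m+1/2},\quad \bar D(I_m)=\beta_m Y_{m+1/2},\quad \bar D(Y_{m+1/2})=\gamma_m L_{m+1}+\delta_m I_{m+1}$$
for complex scalars $\alpha_m,\beta_m,\gamma_m,\delta_m$. The only possible source of inner derivations in this degree is $\ad(cY_{1/2})$, $c\in\C$, and a direct calculation gives $\ad(Y_{1/2})L_n=\frac{1-n+a+bn}{2}Y_{n+1/2}$, $\ad(Y_{1/2})I_n=0$, and $\ad(Y_{1/2})Y_{n+1/2}=-nI_{n+1}$.

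First I apply $\bar D$ to the bracket $[L_0,L_n]=-nL_n$, which yields $\alpha_n(1+a)=\alpha_0(1-n+a+bn)$. By Remark~\ref{R} we may assume $a\in\{0,1\}\cup(\C\setminus\Z)$, so $1+a\neq 0$ and $\alpha_n$ has exactly the shape of $\ad(Y_{1/2})L_n$. Subtracting an appropriate multiple of $\ad(Y_{1/2})$ from $\bar D$ reduces us to $\alpha_m\equiv 0$ (this modification only shifts $\delta_m$ linearly in $m$ while leaving $\beta_m,\gamma_m$ unchanged). Applying the derivation identity to $[L_0,I_n]$, to $[I_m,Y_{n+1/2}]=0$, and to $[L_0,Y_{n+1/2}]$ respectively then gives
$$\beta_n(a-1)=0,\qquad \beta_m(m-n)+\gamma_n\bigl(m+a+b(n+1)\bigr)=0,\qquad \gamma_n(a-1)=0,\ \ \delta_n(1+a)=0.$$
For $a\neq 1$ the first and third relations force $\beta_n=\gamma_n=0$, and since $1+a\neq 0$ the fourth gives $\delta_n=0$, so $\bar D$ coincides with the inner derivation that was subtracted.

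The remaining normalization $a=1$ requires extra care because the identities involving $a-1$ become vacuous. Setting $n=m$ in the second relation gives $\gamma_m(m+1)(1+b)=0$; for $b\neq -1$ this forces $\gamma_m=0$ whenever $m\neq -1$, and varying $m$ in the same relation then pins down $\beta_m=0$ and $\gamma_{-1}=0$ as well, after which $\delta_n(1+a)=2\delta_n=0$ closes the case. The residual subcase $a=1,\,b=-1$ is the main technical hurdle: the second relation degenerates to $(\beta_m+\gamma_n)(m-n)=0$, which merely forces $\beta_m$ and $-\gamma_n$ to be equal constants. To eliminate this constant I apply $\bar D$ to the remaining bracket $[Y_{m+1/2},Y_{n+1/2}]=(m-n)I_{m+n+1}$, the only relation tying $\beta$ to $\gamma$ through the $Y$-part; with $a=1,\,b=-1$ the resulting identity simplifies to $3\beta(m-n)=0$, forcing $\beta=0$, hence $\gamma=0$ and $\delta=0$ by the previous step. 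In every case $\bar D$ is inner, so ${\rm H}^1(\Wgab_{m/2},\Wgab_{(m+1)/2})=0$.
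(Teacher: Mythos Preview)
Your argument is correct and follows essentially the same route as the paper: parameterize a degree-$\tfrac12$ derivation, extract the linear constraints on the coefficients from the bracket identities, and match against $\ad(Y_{1/2})$. The only organizational difference is that you subtract the inner part $c\,\ad(Y_{1/2})$ at the outset (normalizing $\alpha_m\equiv 0$), whereas the paper carries the coefficient $b_1^{1/2}$ through equations (2.44)--(2.49), solves for all the sequences explicitly, and then recognizes the result as $\ad\bigl(\tfrac{2b_1^{1/2}}{1+a}Y_{1/2}\bigr)$. Both arguments split into $a\neq 1$ versus $a=1$, with the latter further divided by the value of $b$, and both invoke the $[Y_{m+1/2},Y_{n+1/2}]$ identity (your $3\beta(m-n)=0$, the paper's use of (2.49) at $m=1,n=0$) to eliminate the residual constant in the subcase $(a,b)=(1,-1)$. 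Your subtraction trick shortens the bookkeeping, but the underlying computations are the same.
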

\begin{proof}
$\forall D\in\Der(\Wgab)_{\frac{1}{2}},$ we assume that
$$D(L_m)=b_1^{m+\frac{1}{2}}Y_{m+\frac{1}{2}},\  D(I_m)=b_2^{m+\frac{1}{2}}Y_{m+\frac{1}{2}},\  D(Y_{m+\frac{1}{2}})=a_{11}^{m+1}L_{m+1}+a_{12}^{m+1}I_{m+1}.$$ By the definition of derivation and the bracket for $\Wgab$,
we have\begin{equation}\label{q73}
 (m-n)b_1^{m+n+\frac{1}{2}}=(m+\dis\frac{1-n+a+bn}{2})b_1^{m+\frac{1}{2}}-(n+\dis\frac{1-m+a+bm}{2})b_1^{n+\frac{1}{2}}\end{equation}
\begin{equation}\label{q74}(n+a+bm)b_2^{m+n+\frac{1}{2}}=(n+\dis\frac{1-m+a+bm}{2})b_2^{n+\frac{1}{2}}\end{equation}
\begin{equation}\label{q75}(n+\dis\frac{1-m+a+bm}{2})a_{11}^{m+n+1}=(n+1-m)a_{11}^{n+1}\end{equation}
\begin{equation}\label{q76}(n+\dis\frac{1-m+a+bm}{2})a_{12}^{m+n+1}=(n-m)b_1^{m+\frac{1}{2}}+(n+1+a+bm)a_{12}^{n+1}\end{equation}
\begin{equation}\label{q77}(m-n)b_2^{m+\frac{1}{2}}+(m+a+b(n+1))a_{11}^{n+1}=0\end{equation}
\begin{equation}\label{q78}(m-n)b_2^{m+n+1+\frac{1}{2}}=-(n+\dis\frac{-m+a+b(m+1)}{2})a_{11}^{m+1}+(m+\dis\frac{-n+a+b(n+1)}{2})a_{11}^{n+1}
\end{equation}
Let $n=0$ in (\ref{q73}), we have $(1+a)b_1^{m+\frac{1}{2}}=(1-m+a+bm)b_1^{\frac{1}{2}}$. Since $a\not\in\Z$ or $a=0, a=1$, we get
$$b_1^{m+\frac{1}{2}}=\dis\frac{1+a-m+bm}{1+a}b_1^{\frac{1}{2}}, \ \ \ \forall m\in\Z.$$
Let $m=0$ in (\ref{q74}), (\ref{q75}) and (\ref{q76}), we have
\begin{equation}\label{q79}(a-1)b_2^{n+\frac{1}{2}}=0,\end{equation}
\begin{equation}\label{q80}(a-1)a_{11}^{n+1}=0,\end{equation}
\begin{equation}\label{q81}(a+1)a_{12}^{n+1}=-2nb_1^{\frac{1}{2}}.\end{equation}
Then we get $a_{12}^{n+1}=\dis\frac{-2n}{a+1}b_1^{\frac{1}{2}},$ for all $n\in\Z$ since $a\not\in\Z$ or $a=0, a=1$.
Let $m=-1$ in (\ref{q77}) and use (\ref{q80}), we obtain \begin{equation}\label{q82}(n+1)ba_{11}^{n+1}=(n+1)b_2^{-1+\frac{1}{2}}, \ \ \forall n\in\Z.\end{equation}
{\bf Case 1:} $a\not\in\Z$ or $a=0$. It follows from (\ref{q79}) and (\ref{q80}) that $$b_2^{n+\frac{1}{2}}=0, \ \ \ a_{11}^{n+1}=0, \ \ \forall n\in\Z.$$ So $$D(L_m)=\dis\frac{1+a-m+bm}{1+a}b_1^{\frac{1}{2}}Y_{m+\frac{1}{2}},\  \ D(I_m)=0,\  D(Y_{m+\frac{1}{2}})=\dis\frac{-2m}{1+a}b_1^{\frac{1}{2}}I_{m+1}.$$
Set $E=\dis\frac{2b^{\frac{1}{2}}}{1+a}Y_{\frac{1}{2}}$, then
$$D(L_m)=adE(L_m), \ \ D(I_m)=adE(I_m), \ \ D(Y_{m+\frac{1}{2}})=adE(Y_{m+\frac{1}{2}}).$$
{\bf Case 2:} $a=1$. Let $n=-1$ in (\ref{q77}), we have \begin{equation}\label{q83} b_2^{m+\frac{1}{2}}=-a_{11}^{0}, \ \ \ m\neq -1.\end{equation}
By (\ref{q82}), we also have \begin{equation}\label{q84} b_2^{-1+\frac{1}{2}}=ba_{11}^{n+1}, \ \ \ \ n\neq -1.\end{equation}
Let $n=0, m=1$ in (\ref{q74}) and use (\ref{q83}), we get \begin{equation}\label{q85}(1+b)a_{11}^{0}=0.\end{equation}
On the other hand, let $m+n=-1$ in (\ref{q75}), we get
\begin{equation}\label{q86}a_{11}^{n+1}=\dis\frac{3-b}{4}a_{11}^{0},\ \ \ \ n\neq -1.\end{equation}
{\bf Subcase 2.1:} $b=0$. From (\ref{q83})-(\ref{q86}), we get $$b_2^{m+\frac{1}{2}}=0, \ \ \ a_{11}^{n+1}=0, \ \ \ \forall m, n\in\Z.$$
So we have the same results as case1.\\
{\bf Subcase 2.2:} $b\neq 0$. If $b\neq -1$. By (\ref{q85}), we have $a_{11}^{0}=0,$ then by (\ref{q83}), (\ref{q84}), (\ref{q86}), we obtain
 $$b_2^{m+\frac{1}{2}}=0, \ \ \ a_{11}^{n+1}=0, \ \ \ \forall m, n\in\Z.$$ So we get the same results as subcase 2.1 completely.\\
If $b=-1$, By (\ref{q83}), (\ref{q84}), (\ref{q86}), we have
\begin{equation}\label{L8}a_{11}^{n+1}=a_{11}^{0}, \ \ \ b_2^{m+\frac{1}{2}}=-a_{11}^{0}, \ \ \forall m, n\in\Z.\end{equation}
On the other hand, let $n=0, m=1$ in (\ref{q78}) and use (\ref{L8}), we obtain $a_{11}^{0}=0$. Then $a_{11}^{n+1}=b_2^{n+\frac{1}{2}}=0$, for all $n\in\Z$ and we get the same results as subcase 2.1.
\end{proof}
\begin{lem}\label{L3.8}Up to isomorphism,
\begin{eqnarray}\nonumber
{\rm H}^{1}(\Wgab_{\frac{m}{2}}, \Wgab_{\frac{m-1}{2}})=\left\{
\begin{array}{ll}
&\C\bar{D}, \ \ \ (a, b)=(1, -1) \vs{4pt}\\
&0, \ \ \ \ \ \ \ else
\end{array}
\right.
\end{eqnarray}
where$$\bar{D}(L_m)=0, \ \bar{D}(I_m)=0, \ \bar{D}(Y_{m+\frac{1}{2}})=I_m.$$\end{lem}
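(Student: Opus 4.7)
The plan is to mirror the proof of the preceding lemma (which computed ${\rm H}^{1}(\Wgab_{\frac{m}{2}},\Wgab_{\frac{m+1}{2}})$), with the direction of the grading shift reversed. For any $D\in\Der(\Wgab)_{-\frac{1}{2}}$ I would write
$$D(L_m)=c_1^{m-\frac{1}{2}}Y_{m-\frac{1}{2}},\quad D(I_m)=c_2^{m-\frac{1}{2}}Y_{m-\frac{1}{2}},\quad D(Y_{m+\frac{1}{2}})=\alpha_{m}L_{m}+\beta_{m}I_{m},$$
with $c_1^{m-\frac{1}{2}},c_2^{m-\frac{1}{2}},\alpha_m,\beta_m\in\C$. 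Applying the Leibniz rule to each of the six defining brackets (\ref{defi1})--(\ref{defi2}) yields six systems of recurrences in these four coefficient sequences. Specializing small values of the indices ($n=0$, $m=\pm 1$, $m+n=0$, etc.) makes those recurrences tractable, exactly as in the two preceding lemmas.

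The analysis splits into cases according to Remark \ref{R}: $a\notin\Z$, $a=0$, or $a=1$. In the first two cases I expect every coefficient to vanish, possibly after subtracting an inner derivation of the form $\ad(\gamma Y_{-\frac{1}{2}})$, giving ${\rm H}^{1}=0$. The case $a=1$ further breaks by the value of $b$. The crucial constraint comes from applying $D$ to $[L_m,Y_{n+\frac{1}{2}}]$: consistency of a would-be outer derivation of the form $Y_{n+\frac{1}{2}}\mapsto I_n$ with the $L$-action forces the coefficient identity $\frac{1-m+a+bm}{2}=a+bm$ for all $m\in\Z$, i.e.\ $1-m=a+bm$, which holds if and only if $(a,b)=(1,-1)$. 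This is exactly the extra generator $\bar D$ of the statement.

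The last step is to verify that $\bar D$ is genuinely outer when $(a,b)=(1,-1)$. Since $\Wgab_{-\frac{1}{2}}=\C Y_{-\frac{1}{2}}$, the only inner derivations of degree $-\frac{1}{2}$ are $\ad(\gamma Y_{-\frac{1}{2}})$ for $\gamma\in\C$, and
$$\ad(\gamma Y_{-\frac{1}{2}})(Y_{m+\frac{1}{2}})=\gamma(-1-m)I_{m}$$
cannot equal $I_m$ for all $m\in\Z$ simultaneously; hence $\bar D\notin{\rm Inn}(\Wgab)$. A parallel inspection of the action on $L_m$ and $I_m$ shows that after removing the inner part, any such $D$ reduces to a scalar multiple of $\bar D$ when $(a,b)=(1,-1)$, and to zero in every other $(a,b)$.

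The main obstacle I anticipate is the case $a=1$, $b=-1$: the recurrences on $\alpha_m,\beta_m,c_1^{m-\frac{1}{2}},c_2^{m-\frac{1}{2}}$ admit a one-parameter family of nontrivial solutions, and one must isolate the precise $\bar D$ listed and show that all other solutions become inner. The bookkeeping here, especially distinguishing $b=-1$ cleanly from nearby exceptional values where similar cancellations occur (compare the exceptional $b=0,1,2$ in the proof of Lemma \ref{L3.6}), is the delicate part.
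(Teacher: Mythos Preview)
Your proposal is correct and follows essentially the same approach as the paper: set up the four coefficient sequences, apply the Leibniz rule to the six brackets, and split into the cases $a\notin\Z$, $a=0$, $a=1$ (with a further split on $b$ in the last case), subtracting $\ad(\gamma Y_{-\frac{1}{2}})$ to kill the inner part. Your observation that the compatibility of $Y_{n+\frac{1}{2}}\mapsto I_n$ with the $L$-action forces $\tfrac{1-m+a+bm}{2}=a+bm$, hence $(a,b)=(1,-1)$, is a cleaner conceptual explanation than the paper gives; the paper arrives at the same exceptional pair purely by grinding through the recurrences until a free parameter $a_{12}^{-1}$ survives in equation (\ref{q101}) when $b=-1$.
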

\begin{proof}$\forall D\in\Der(\Wgab)_{-\frac{1}{2}},$ we assume that
$$D(L_m)=b_1^{m-\frac{1}{2}}Y_{m-1+\frac{1}{2}},\  D(I_m)=b_2^{m-\frac{1}{2}}Y_{m-1+\frac{1}{2}},\  D(Y_{m+\frac{1}{2}})=a_{11}^{m}L_{m}+a_{12}^{m}I_{m}.$$ By the definition of derivation and the bracket for $\Wgab$,
we have\begin{equation}\label{q87}
 (m-n)b_1^{m+n-\frac{1}{2}}=(m-1+\dis\frac{1-n+a+bn}{2})b_1^{m-\frac{1}{2}}-(n-1+\dis\frac{1-m+a+bm}{2})b_1^{n-\frac{1}{2}}\end{equation}
\begin{equation}\label{q88}(n+a+bm)b_2^{m+n-\frac{1}{2}}=(n-1+\dis\frac{1-m+a+bm}{2})b_2^{n-\frac{1}{2}}\end{equation}
\begin{equation}\label{q89}(n+\dis\frac{1-m+a+bm}{2})a_{11}^{m+n}=(n-m)a_{11}^{n}\end{equation}
\begin{equation}\label{q90}(n+\dis\frac{1-m+a+bm}{2})a_{12}^{m+n}=(n+1-m)b_1^{m-\frac{1}{2}}+(n+a+bm)a_{12}^{n}\end{equation}
\begin{equation}\label{q91}(m-1-n)b_2^{m-\frac{1}{2}}+(m+a+bn)a_{11}^{n}=0\end{equation}
\begin{equation}\label{q92}(m-n)b_2^{m+n+\frac{1}{2}}=-(n+\dis\frac{1-m+a+bm}{2})a_{11}^{m}+(m+\dis\frac{1-n+a+bn}{2})a_{11}^{n}
\end{equation}
Let $n=0$ in (\ref{q87}) and $m=0$ in (\ref{q88})-(\ref{q90}), we have the following identities
\begin{equation}\label{q93}(a-1)b_1^{m-\frac{1}{2}}=(-1-m+a+bm)b_1^{-\frac{1}{2}},\end{equation}
\begin{equation}\label{q94}(a-1)a_{12}^{n}=-2(n+1)b_1^{-\frac{1}{2}},\end{equation}
$$(a+1)b_2^{n-\frac{1}{2}}=0, \ \ \ \ (a+1)a_{11}^{n}=0.$$ Since $a\not\in\Z$ or $a=0$ or $a=1$, we get
$$b_2^{n-\frac{1}{2}}=0, \ \ \ a_{11}^{n}=0, \ \ \forall n\in\Z.$$
{\bf Case 1:} $a\not\in\Z$ or $a=0$. By (\ref{q93}) and (\ref{q94}), we get
$$b_1^{m-\frac{1}{2}}=\dis\frac{-1+a-m+bm}{a-1}b_1^{-\frac{1}{2}}, \ \ a_{12}^{n}=\dis\frac{-2(n+1)}{a-1}b_1^{-\frac{1}{2}}, \ \ \forall m, n\in\Z.$$
Then $$D(L_m)=\dis\frac{-1+a-m+bm}{a-1}b_1^{-\frac{1}{2}}Y_{m-\frac{1}{2}}, \ \ D(I_m)=0, \ \ D(Y_{m+\frac{1}{2}})=\dis\frac{-2(m+1)}{a-1}b_1^{-\frac{1}{2}}I_m.$$
Set $E=\dis\frac{2b_1^{-\frac{1}{2}}}{a-1}Y_{-\frac{1}{2}}$, so
$$D(L_m)=adE(L_m), \ \ D(I_m)=adE(I_m), \ \ D(Y_{m+\frac{1}{2}})=adE(Y_{m+\frac{1}{2}}).$$
{\bf Case 2:} $a=1$. By (\ref{q94}), we have $b_1^{-\frac{1}{2}}=0$. So let $m+n=0$ in (\ref{q87}), we have
$$(3-b)(b_1^{m-\frac{1}{2}}+b_1^{-m-\frac{1}{2}})=0,$$ then $$(b_1^{m-\frac{1}{2}}+b_1^{-m-\frac{1}{2}})=0, \ \ b\neq 3, \forall m\in\Z.$$
On the other hand, let $n=1$ in (\ref{q87}), we have
$$(m-1)b_1^{m+1-\frac{1}{2}}=(m+\frac{b-1}{2})b_1^{m-\frac{1}{2}}-(1+\frac{b-1}{2}m)b_1^{1-\frac{1}{2}},$$ that is
\begin{equation}\label{q95}(m-1)(b_1^{m+1-\frac{1}{2}}-(m+1)b_1^{1-\frac{1}{2}})=(m+\frac{b-1}{2})(b_1^{m-\frac{1}{2}}-mb_1^{1-\frac{1}{2}})\end{equation}
Induction on $m\in\Z, m\geq 3$ in (\ref{q95}), we get \begin{equation}\label{q103}b_1^{m-\frac{1}{2}}=\dis\frac{(3+b)(5+b)\cdots(2m-3+b)}{2^{m-2}(m-2)!}(b_1^{2-\frac{1}{2}}-2b_1^{1-\frac{1}{2}})
+mb_1^{1-\frac{1}{2}}.\end{equation}
Let $n=-1$ in (\ref{q90}), we have \begin{equation}\label{q96}\frac{b-1}{2}ma_{12}^{m-1}=bma_{12}^{-1}-mb_1^{m-\frac{1}{2}}.\end{equation}
{\bf Subcase 2.1:} $b=1$. By (\ref{q95}) and using induction, we have
\begin{equation}\label{q97}b_1^{m-\frac{1}{2}}=(m-1)b_1^{2-\frac{1}{2}}-(m-2)b_1^{1-\frac{1}{2}}, \ \ \forall m\in\Z, m\neq 0.\end{equation}
On the other hand, by (\ref{q96}), we get
\begin{equation}\label{q98}b_1^{m-\frac{1}{2}}=a_{12}^{-1}, \ \forall m\in\Z,  m\neq 0.\end{equation}
Let $n=0$ in (\ref{q90}) and use (\ref{q98}), we obtain
\begin{equation}\label{q99}a_{12}^{m}=(m+1)a_{12}^{0}-(m-1)a_{12}^{-1}, \ m\neq 0.\end{equation}
Let $m=-1$ in (\ref{q99}), we get $a_{12}^{-1}=0$. Consequently, we obtain
$$b_1^{m-\frac{1}{2}}=0, \ \ \ a_{12}^{m}=(m+1)a_{12}^{0}, \ \ \forall m\in\Z.$$
 So
$$D(L_m)=0, \ \ D(I_m)=0, \ \ D(Y_{m+\frac{1}{2}})=(m+1)a_{12}^{0}I_m.$$
Set $E=-a_{12}^{0}Y_{-\frac{1}{2}}$, then we have$$
D(L_m)=adE(L_m), \ D(I_m)=adE(I_m), \ D(Y_{m+\frac{1}{2}})=adE(Y_{m+\frac{1}{2}}).$$
{\bf Subcase 2.2:} $b\neq 1$. By (\ref{q96}), we have \begin{equation}\label{q101}a_{12}^{m-1}=\frac{2b}{b-1}a_{12}^{-1}-\frac{2}{b-1}b_1^{m-\frac{1}{2}}, \ \ m\neq 0.\end{equation}
Let $n+1-m=0$ in (\ref{q90}), we have $$(n+1)(b+1)a_{12}^{2n+1}=2(n+1)(b+1)a_{12}^{n},$$ then
\begin{equation}\label{q100}(b+1)a_{12}^{2n+1}=2(b+1)a_{12}^{n}, \ \ n\neq -1.\end{equation}
If $b\neq -1$, we have $a_{12}^{2n+1}=2a_{12}^{n}, \ n\neq -1.$ using (\ref{q101}), we get\begin{equation}\label{q102}b_1^{2m+2-\frac{1}{2}}=2b_1^{m+1-\frac{1}{2}}-ba_{12}^{-1}, \ m\neq -1.\end{equation}
Especially, $b_1^{2-\frac{1}{2}}-2b_1^{1-\frac{1}{2}}=-ba_{12}^{-1}$. Combine (\ref{q102}) and (\ref{q103}), we get
\begin{equation}\label{q104}ba_{12}^{-1}=0.\end{equation}
Consequently, we get
$$b_1^{m-\frac{1}{2}}=mb_1^{1-\frac{1}{2}}, \ \ a_{12}^{m-1}=-\frac{2m}{b-1}b_1^{1-\frac{1}{2}}, \ \ \forall m\in\Z.$$
Set $E=\dis\frac{2}{b-1}b_1^{1-\frac{1}{2}}Y_{-\frac{1}{2}}$, then we have$$
D(L_m)=adE(L_m), \ D(I_m)=adE(I_m), \ D(Y_{m+\frac{1}{2}})=adE(Y_{m+\frac{1}{2}}).$$
If $b=-1$, by (\ref{q103}), we still have
$$b_1^{m-\frac{1}{2}}=mb_1^{1-\frac{1}{2}}, \ \ a_{12}^{m-1}=a_{12}^{-1}+mb_1^{1-\frac{1}{2}},\, \forall m\in\Z.$$
Set $E=-b_1^{1-\frac{1}{2}}Y_{-\frac{1}{2}}$, then we have$$
D(L_m)=adE(L_m), \ D(I_m)=adE(I_m), \ D(Y_{m+\frac{1}{2}})=adE(Y_{m+\frac{1}{2}})+a_{12}^{-1}I_m.$$
Then we get $${\rm H}^{1}(\Wgab_{\frac{m}{2}}, \Wgab_{\frac{m-1}{2}})=\C\bar{D},$$
Where $\bar{D}(L_m)=0, \bar{D}(I_m)=0, \bar{D}(Y_{m+\frac{1}{2}})=I_m.$
\end{proof}
Now, by lemma \ref{L3.6}-\ref{L3.8}, we obtain the main theorem of this section as following.
\begin{theorem}\label{T3} Up to isomorphism, we have
\begin{eqnarray}\nonumber
{\rm H}^{1}(\Wgab, \Wgab)=\left\{
\begin{array}{ll}
&\C D_{1}\bigoplus \C D_{2}^{0,0}\bigoplus \C D_{3}, \ \ \ \; (a,
b)=(0, 0),
\vs{4pt}\\
&\C D_{1}\bigoplus \C D_{2}^{0,1}, \ \ \quad\quad\quad\quad \; (a, b)=(0, 1),
\vs{4pt}\\
&\C D_{1}\bigoplus \C D_{2}^{0,2}, \ \ \ \quad\quad\quad\quad  (a, b)=(0,2),
\vs{4pt}\\
&\C D_{3}^{1,-1}, \ \ \ \quad\quad\quad\quad\quad\quad\quad (a, b)=(1, -1),
\vs{4pt}\\
&\C D_{1}, \ \ \ \quad\quad\quad\quad\quad\quad\quad\quad otherwise,
\end{array}
\right.
\end{eqnarray}
where for all $m\in\Z$,
\begin{eqnarray}
&&D_{1}(L_{m})=0, \quad\quad\quad\quad\quad\quad\quad D_{1}(I_{m})=I_{m}, \quad\quad\quad D_{1}(Y_{m+\frac{1}{2}})=Y_{m+\frac{1}{2}},
\vs{4pt}\\
&&D_{2}^{0,0}(L_{m})=(m-1)I_{m}, \quad\quad\quad D_{2}^{0,0}(I_{m})=0,\quad\quad\quad D_{2}^{0,0}(Y_{m+\frac{1}{2}})=0,
\vs{4pt}\\
&&D_{2}^{0,1}(L_{m})=(m^2-m)I_{m}, \quad\quad\; D_{2}^{0,1}(I_{m})=0,\quad\quad\quad D_{2}^{0,1}(Y_{m+\frac{1}{2}})=0,
\vs{4pt}\\
&&D_{2}^{0,2}(L_{m})=m^3 I_m, \quad\quad\quad\quad\quad D_{2}^{0,2}(I_{m})=0,\quad\quad\quad D_{2}^{0,2}(Y_{m+\frac{1}{2}})=0,
\vs{4pt}\\
&&D_3(L_m)=mI_m, \quad\quad\quad\quad\quad\quad D_3(I_m)=0,\quad\quad\quad\quad D_{3}(Y_{m+\frac{1}{2}})=0,
\vs{4pt}\\
&&D_3^{1,-1}(L_m)=0, \quad\quad\quad\quad\quad\quad D_3^{1,-1}(I_m)=0,\quad\quad\quad D_{3}^{1,-1}(Y_{m+\frac{1}{2}})=I_m.
\end{eqnarray}
\end{theorem}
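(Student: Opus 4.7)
The plan is to reduce the theorem to the degreewise cohomology computations carried out in the earlier lemmas. By Lemma \ref{L3.4}, every derivation of $\Wgab$ is, modulo an inner derivation, the sum of homogeneous pieces of degrees $0$, $\pm\tfrac12$ and $\pm 1$ with respect to the $\tfrac12\Z$-grading recorded in Lemma \ref{P2.1}(4); consequently
\[
H^{1}(\Wgab,\Wgab)=H^{1}(\Wgab,\Wgab)_{0}\oplus\bigoplus_{k\in\{\pm\frac12,\pm 1\}}H^{1}(\Wgab,\Wgab)_{k}.
\]

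With this decomposition in hand I would substitute the output of each preceding lemma into the corresponding graded summand. Lemma \ref{L3.6} computes the degree-$0$ piece: for generic $(a,b)$ it is spanned by the class of $\bar D$, while the three exceptional parameters $(0,0),(0,1),(0,2)$ each produce additional classes, and $(0,0)$ produces yet one more. After renaming the representatives $\bar D,\bar D_{1},\bar D_{2},\bar D_{3}$ of Lemma \ref{L3.6} as $D_{1},D_{2}^{a,b},D_{3}$ in the statement, this accounts for all the degree-$0$ entries. The two unlabelled lemmas immediately preceding Lemma \ref{L3.8} take care of the degrees $\pm 1$ and $+\tfrac12$: they all vanish. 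Finally, Lemma \ref{L3.8} handles degree $-\tfrac12$, contributing the single outer class $D_{3}^{1,-1}$ exactly when $(a,b)=(1,-1)$ and nothing otherwise.

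Assembling these contributions case by case reproduces the five cases of the theorem. The only thing to check is that classes surviving in distinct graded components remain linearly independent in $H^{1}(\Wgab,\Wgab)$, which is automatic: inner derivations respect the $\tfrac12\Z$-grading, since $\mathrm{ad}\,v$ for $v\in\Wgab_{k}$ has degree $k$, so a coboundary cannot relate cocycles living in different graded pieces, and inside each fixed graded piece the independence has already been verified in the relevant lemma.

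I expect the main obstacle to have already been absorbed into Lemmas \ref{L3.6} and \ref{L3.8}: the genuinely delicate work is the case analysis on $(a,b)$ done there (in particular the isolated appearance of $D_{3}^{1,-1}$ at $(1,-1)$ and the extra classes $D_{2}^{0,b}$ at the three exceptional second coordinates), together with the subtle step in Lemma \ref{L3.4} ensuring that no graded component outside degrees $0,\pm\tfrac12,\pm 1$ can contribute. The present theorem is then a book-keeping of those outputs, amounting to collecting the surviving classes in each of the five parameter regimes and matching notation.
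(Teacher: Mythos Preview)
Your proposal is correct and follows precisely the paper's approach: the paper's own proof of this theorem is simply the one-line remark ``by Lemmas~\ref{L3.6}--\ref{L3.8} we obtain the main theorem,'' and you have given a faithful elaboration of that assembly, invoking Lemma~\ref{L3.4} for the graded decomposition and then plugging in the degreewise computations from Lemma~\ref{L3.6}, the two intermediate lemmas, and Lemma~\ref{L3.8}. Your observation that graded independence of the surviving classes is automatic (since inner derivations are homogeneous) is a helpful clarification the paper leaves implicit.
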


%
\section{\bf Automorphism groups of $\Wgab$ }
%
\label{sub7-c-related}

Denote by  $\mathfrak{I}$  the inner automorphism group of ${\Wgab}$.
Obviously,  $\mathfrak{I}$  is generated by $\{{\rm exp\,}k_{i}\, {\rm ad}\,I_{i}, {\rm
exp\,}l_{i}\, {\rm ad}\,Y_{i}\mid k_i,l_i\in\C,\,i,j\in \Z\}$ and  is a normal subgroup of
$\Aut({\Wgab})$.  We can verify easily that $\mathfrak{I}$ is isomorphism to $\C^{\infty}\times \C^{\infty}$ as sets. But by computation, these generators satisfy the following relation: $$ ({\rm exp}\, \alpha\, {\rm ad\,}Y_{j+\frac{1}{2}})({\rm
exp}\, \beta\,{\rm ad}\,Y_{i+\frac{1}{2}})({\rm exp} \,\alpha\,{\rm
ad\,}Y_{j+\frac{1}{2}})^{-1} ({\rm exp}\, \beta\, {\rm ad\,}Y_{i+\frac{1}{2}})^{-1}={\rm exp}\,\gamma\, {\rm ad\,}I_{i+j+1},$$
 where $\gamma=\alpha\beta(j-i)$. And the others is commutable each other. Hence $\mathfrak{I}$ is isomorphism to $\C^{\infty}\rtimes \C^{\infty}$ as groups.

 For any $\prod\limits_{j=s}^{t}\exp(k_{i_{j}}\ad I_{i_{j}}+l_{i_{j}}\ad Y_{i_{j}+\frac{1}{2}})\in
\mathfrak{I}$, we have
\begin{equation}\label{E4-0}
\prod\limits_{j=s}^{t}\exp(k_{i_{j}}\ad I_{i_{j}}+l_{i_{j}}\ad Y_{i_{j}+\frac{1}{2}})(I_{n})=I_{n},
\end{equation}
\begin{equation}\label{E4-f1}
\prod\limits_{j=s}^{t}\exp(k_{i_{j}}\ad I_{i_{j}}+l_{i_{j}}\ad Y_{i_{j}+\frac{1}{2}})
(Y_{n+\frac{1}{2}})=Y_{n+\frac{1}{2}}+\sum\limits_{j=s}^{t}l_{i_{j}}(i_{j}-n)I_{i_{j}+n+1}.
\end{equation}
As $\Igab$ is an unique maximal proper ideal of $\Wgab$, we
have the following lemma.
\begin{lem}\label{L4.1}
For any $\si\in \Aut({\Wgab})$, $\si(I_{n}), \ \si(Y_{n+\frac{1}{2}})\in\Igab$ for all
$n\in\Z$.
\end{lem}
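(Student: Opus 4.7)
The plan is to show that $\Igab$ coincides with the radical (the maximal solvable ideal) of $\Wgab$; since the radical is an invariant of the Lie algebra structure, it is preserved by every automorphism, and the lemma follows immediately.

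First I would check that $\Igab$ is itself solvable. From the brackets \eqref{defi1}--\eqref{defi2}, the only nontrivial commutator among elements of $\Igab$ is $[Y_{m+\frac{1}{2}}, Y_{n+\frac{1}{2}}] = (m-n)I_{m+n+1}$, so $[\Igab, \Igab] \subseteq \bigoplus_{k\in\Z} \C I_k$. The latter is abelian since $[I_m, I_n] = 0$, whence $\Igab^{(2)} = 0$ and $\Igab$ is solvable of derived length two.

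Next I would argue that every solvable ideal $J$ of $\Wgab$ is contained in $\Igab$. The image $(J + \Igab)/\Igab$ is a solvable ideal of the quotient $\Wgab/\Igab \simeq \W$. Since $\W$ is simple and non-abelian, its only solvable ideal is zero, so $J + \Igab = \Igab$, i.e., $J \subseteq \Igab$. Combined with the previous paragraph, this shows that $\Igab$ is the unique maximal solvable ideal, and in particular the radical, of $\Wgab$.

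Finally, for any $\si \in \Aut(\Wgab)$, the image $\si(\Igab)$ is again an ideal of the same derived length, hence solvable, so $\si(\Igab) \subseteq \Igab$; applying the same reasoning to $\si^{-1}$ forces $\si(\Igab) = \Igab$. In particular $\si(I_n), \si(Y_{n+\frac{1}{2}}) \in \Igab$ for all $n \in \Z$. The main (and only) nontrivial point is the appeal to the simplicity of the Witt algebra $\W$ to rule out nonzero solvable ideals in the quotient; everything else reduces to a direct bracket computation, so I do not anticipate any serious obstacle.
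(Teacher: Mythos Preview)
Your proof is correct and follows essentially the same idea as the paper: both arguments hinge on the fact that $\Wgab/\Igab\simeq\W$ is simple, which forces $\Igab$ to be a characteristic ideal. The paper phrases this as ``$\Igab$ is the unique maximal proper ideal of $\Wgab$'' (and gives no further details), whereas you identify $\Igab$ as the radical; both characterizations immediately yield invariance under $\Aut(\Wgab)$.
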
\qed\\
For any $\si\in \Aut({\Wgab})$, denote $\si|_{\W}=\si'$. Then
according to  the automorphisms of the classical Witt algebra, we
have $\si'(L_{m})=\epsilon \a^{m}L_{\epsilon m}$ for all $m\in\Z$,
where $\a\in\C^{*}$ and $\epsi\in\{\pm1\}$. So we have the following lemmas.

\begin{lem}\label{L4.f1}
For $a\not\in\Z$, $\Aut({\Wgab})\cong(\C^{\infty}\rtimes\C^{\infty})\rtimes(\C^{*}\times \C^{*})$.\\
where  $\C^{\infty}=\{(a_{i})_{i\in\Z}\;|\; a_{i}\in\C, {\rm{\; all
\; but\;  \; a \;  finite \; number \; of \;  the}} \; a_{i} \;
{\rm{ \; are \; zero}} \ \}$, $\C^{*}=\C\backslash\{0\}$.
\end{lem}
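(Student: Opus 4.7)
The plan is to show that every automorphism of $\Wgab$ factors uniquely as a product of an inner automorphism from $\mathfrak{I}$ and a diagonal torus automorphism parameterised by $\C^*\times\C^*$. First, by Lemma \ref{L4.1}, any $\sigma\in\Aut(\Wgab)$ preserves the ideal $\Igab$ and so descends to an automorphism $\bar\sigma$ of the Witt quotient; the classical description of $\Aut(\mathbf{W})$ then gives $\sigma(L_m)=\epsilon\alpha^m L_{\epsilon m}+u_m$ with $u_m\in\Igab$, $\epsilon\in\{\pm 1\}$, and $\alpha\in\C^*$. To rule out $\epsilon=-1$ when $a\not\in\Z$, I would apply $\sigma$ to $[L_0,I_n]=-(n+a)I_n$: writing $\sigma(I_n)=\sum_k c_{n,k}I_k+\sum_k d_{n,k}Y_{k+1/2}$ and noting that any ideal-valued correction to $\sigma(L_0)$ commutes with the $I$-components of $\sigma(I_n)$, the eigenvalue comparison of $\mathrm{ad}(-L_0)$ against $\mathrm{ad}(L_0)$ forces $(k+n+2a)c_{n,k}=0$ and $(k+n+(1+3a)/2)d_{n,k}=0$; non-integrality of $a$ kills every coefficient, forcing $\sigma(I_n)=0$, a contradiction. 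Hence $\epsilon=1$.

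Next I would straighten $\sigma$ to eliminate the $u_m$ by composing with an inner automorphism. The Leibniz identity $\sigma([L_m,L_n])=[\sigma(L_m),\sigma(L_n)]$ expresses $m\mapsto\alpha^{-m}u_m$ as a $1$-cocycle from $\mathbf{W}$ into $\Igab$. By a weight-by-weight analysis that parallels the method of Lemmas \ref{L3.2} and \ref{L3.3}, this cocycle is a coboundary: there exists $w\in\Igab$ such that replacing $\sigma$ by $\exp(\mathrm{ad}(-w))\circ\sigma\in\mathfrak{I}\cdot\sigma$ yields $\sigma(L_m)=\alpha^m L_m$ for all $m$. Once $\sigma$ fixes $L_0$ up to scalar, it preserves each one-dimensional $\mathrm{ad}(L_0)$-eigenspace, giving $\sigma(I_n)=\gamma_n I_n$ and $\sigma(Y_{n+1/2})=\delta_n Y_{n+1/2}$; the defining relations \eqref{defi1}--\eqref{defi2} then force $\gamma_n=\alpha^n\gamma_0$, $\delta_n=\alpha^n\delta_0$, and $\delta_0^{\,2}=\alpha\gamma_0$. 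Thus the straightened part has exactly two free parameters $\alpha,\delta_0\in\C^*$, exhibiting a torus $T\cong\C^*\times\C^*$.

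To complete the group-theoretic statement I would check that $\mathfrak{I}\cap T=\{1\}$ and that $T$ normalises $\mathfrak{I}$ (noting $\mathfrak{I}$ is already normal in $\Aut(\Wgab)$ by the discussion preceding the lemma), giving $\Aut(\Wgab)\cong\mathfrak{I}\rtimes T\cong (\C^\infty\rtimes\C^\infty)\rtimes(\C^*\times\C^*)$. The main obstacle is the straightening step: one must show that the $\mathbf{W}$-module cocycle $m\mapsto\alpha^{-m}u_m$ is always a coboundary in $\Igab$, even though the earlier full cohomology $H^1(\Wgab,\Wgab)$ computed in Theorem \ref{T3} was one-dimensional for generic $(a,b)$. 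The point is that the non-trivial class $D_1$ there vanishes on $\mathbf{W}$, so it cannot obstruct this reduction. Executing the step cleanly requires careful tracking of both the $I$-valued and $Y$-valued components of $u_m$ simultaneously, together with the fact that $a\not\in\Z$ keeps every divisor $(k+a+bm)$ non-zero and hence avoids the special anomalies encountered in the proof of Lemma \ref{L3.6}.
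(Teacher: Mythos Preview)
Your overall strategy is close to the paper's, but two steps have genuine gaps.

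\textbf{Excluding $\epsilon=-1$.} The equations $(k+n+2a)c_{n,k}=0$ and $(k+n+\tfrac{1+3a}{2})d_{n,k}=0$ do \emph{not} force all coefficients to vanish from $a\notin\Z$ alone: when $a\in\tfrac{1}{2}\Z\setminus\Z$ the first admits the integer solution $k=-n-2a$, and when $a\in\tfrac{1}{3}(2\Z-1)\setminus\Z$ the second does. (There is also a smaller issue: because you have not yet straightened $\sigma(L_0)$, the $Y$-part of $u_0$ brackets with the $Y$-part of $\sigma(I_n)$ to produce $I$-terms, so your first equation is only valid \emph{after} $d_{n,k}=0$ has been established.) The paper first straightens to $\bar\sigma(L_0)=\epsilon L_0$ exactly, then observes that for $\epsilon=-1$ the $Y$-eigenvalue condition on $\bar\sigma(Y_{m+1/2})$ forces its $Y$-component to vanish (since $v+m+1+a\neq 0$ for $a\notin\Z$), leaving only an $I$-component, which requires $a\in\tfrac{1}{3}(2\Z-1)$; in that residual case $[\bar\sigma(Y_{m+1/2}),\bar\sigma(Y_{n+1/2})]=0$ contradicts $(m-n)\bar\sigma(I_{m+n+1})\neq 0$. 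You need an analogous case split.

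\textbf{The straightening step.} Your claim that $m\mapsto\alpha^{-m}u_m$ is a $1$-cocycle from $\W$ into $\Igab$ is not correct as stated: $\Igab$ is two-step nilpotent, not abelian, and the homomorphism condition gives
\[
(m-n)u_{m+n}=\alpha^m[L_m,u_n]-\alpha^n[L_n,u_m]+[u_m,u_n],
\]
with the last bracket (landing in the $I$-span via $[Y,Y]$) spoiling the cocycle identity. One can repair this by a two-stage reduction---first kill the $Y$-components modulo $[\Igab,\Igab]$, then the remaining $I$-components---but this is more work than you indicate, and your closing remark that ``every divisor $(k+a+bm)$'' is nonzero is also false for general $b$. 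The paper sidesteps all of this: it straightens only $\bar\sigma(L_0)$ by an explicit inner automorphism (the relevant divisors $i+a$ and $j+\tfrac{1+a}{2}$ are nonzero precisely because $a\notin\Z$), and then reads off $\bar\sigma(L_n)$, $\bar\sigma(I_n)$, $\bar\sigma(Y_{n+1/2})$ directly from the one-dimensional $\mathrm{ad}\,L_0$-eigenspaces, avoiding cohomological machinery altogether.
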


\begin{proof}
For any $\si\in \Aut({\Wgab})$,
assume
$$\si(L_{0})= \epsi L_{0}+\sum\limits_{i=p }^{q} \l_{i} I_{i}+\sum\limits_{j=s }^{t} l_{j} Y_{j+\frac{1}{2}},$$
where $\l_{i}, \ l_j\in\C$, $p, q, s, t\in\Z$,  $p\leq i\leq q, \ s\leq j\leq t$.\\
Since $a\not\in \Z$, we let
$\tau_{1}=\prod\limits_{i=p}^{q}\exp(\frac{\l_{i}}{\epsi (i+a)}\ad
I_{i})\prod\limits_{j=s}^{t}\exp(\frac{l_{j}}{\epsi (j+\frac{1+a}{2})}\ad Y_{j+\frac{1}{2}})\in \mathfrak{I}$,
then
$$\tau_1(\epsi L_{0})=\epsi L_0+\sum\limits_{i=p }^{q} \l_{i} I_{i}+\sum\limits_{j=s }^{t} l_{j} Y_{j+\frac{1}{2}}+
\sum\limits_{k=p'}^{q'} b_{k} I_{k}=\si(L_{0})+\sum\limits_{k=p'}^{q'} b_{k} I_{k},$$
where $b_k\in\C, \ p', q'\in\Z$. On the other hand, let $\tau_2=\prod\limits_{k=p'}^{q'}\exp(\frac{-b_{k}}{\epsi (k+a)}\ad
I_{k})\in \mathfrak{I}$ and $\tau=\tau_2\tau_1\in \mathfrak{I}$, we have $\tau(\epsi L_0)=\si(L_0)$.
Consequently, $\bar{\si}=\tau^{-1}\si$ and $\bar{\si}(L_{0})=\epsi L_{0}.$\\
By Lemma \ref{L4.1}, we may assume
$$\bar{\si}(L_{n})= \a^{n}\epsi L_{\epsi
n}+\sum\l_{n_{i}}I_{n_{i}}+\sum\mu_{m_j}Y_{m_j+\frac{1}{2}}, \;\; n\neq 0,$$
$$\bar{\si}(I_{m})=\sum c_{p}I_{p}+\sum d_{q}Y_{q+\frac{1}{2}}, $$
$$\bar{\si}(Y_{m+\frac{1}{2}})=\sum e_{u}I_{u}+\sum f_{v}Y_{v+\frac{1}{2}}, $$
where each formula
is of finite terms and $\l_{n_{i}}, \mu_{m_{j}}, c_{p}, d_{q}, e_{u}, f_{v}\in\C, n_{i}, m_{j}, p,
q, u, v\in\Z$, $\a\in\C^*, \epsi=\pm 1$. For any $n\neq 0$, by  the relation
$[\bar{\si}(L_{0}),\bar{\si}(L_{n})]=-n\bar{\si}(L_{n})$, we have
$$\l_{n_{i}}[\epsi(n_{i}+a)-n]=0, \ \mu_{m_j}[\epsi(m_{j}+\frac{1+a}{2})-n]=0.$$
Since $a\not\in Z$, this forces that $\l_{n_{i}}=0, \ \mu_{m_j}=0$ for all $n_i, m_j\in\Z$. So we obtain
$$\bar{\si}(L_{n})= \a^{n}\epsi L_{\epsi n}, \ \forall n\in Z.$$
Since $[\bar{\si}(L_{0}),\bar{\si}(I_{m})]=-(m+a)\bar{\si}(I_{m}), [\bar{\si}(L_{0}),\bar{\si}(Y_{m+\frac{1}{2}})]=-(m+\frac{1+a}{2})\bar{\si}(Y_{m+\frac{1}{2}})$,
we have
$$c_{p}[\epsi(p+a)-(m+a)]=0, \ \ d_{q}[\epsi(q+\frac{1+a}{2})-(m+a)]=0,$$
$$e_{u}[\epsi(u+a)-(m+\frac{1+a}{2})]=0, \ \ f_{v}[\epsi(v+\frac{1+a}{2})-(m+\frac{1+a}{2})]=0.$$
Therefore, if $\epsi=1$, then $p=m, v=m$ and all $d_{q}=0, e_{u}=0$; If $\epsi=-1$, then $p=-m-2a$ and $q=-m-\dis\frac{1+3a}{2}$, which implies that
$a\in\dis\frac{\Z}{2}$ and $a\in\dis\frac{2\Z-1}{3}$. We also have all $f_{v}=0$ and $u=-m-\dis\frac{1+3a}{2}$ which implies that $a\in\dis\frac{2\Z-1}{3}$ if $\epsi=-1$.  Since $a\not\in\Z$, either $a\in\dis\frac{\Z}{2}$ or $a\in\dis\frac{2\Z-1}{3}$ holds. \\
If $a\in\dis\frac{\Z}{2}$, from the above discussion, it forces that $\epsi=1$, otherwise it must be that $\bar{\si}(Y_{m+\frac{1}{2}})=0$, which is contradiction.
If $a\in\dis\frac{2\Z-1}{3}$, we have
$$\bar{\si}(I_{m})=d_{-m-\frac{1+3a}{2}}Y_{-m-\frac{1+3a}{2}+\frac{1}{2}}, \quad \   \bar{\si}(Y_{m+\frac{1}{2}})=e_{-m-\frac{1+3a}{2}}I_{-m-\frac{1+3a}{2}},$$ where
$d_{-m-\frac{1+3a}{2}},e_{-m-\frac{1+3a}{2}}\in\C^{*}$.
But by $[\bar{\si}(Y_{m+\frac{1}{2}}),\bar{\si}(Y_{n+\frac{1}{2}})]=(m-n)\bar{\si}(I_{m+n+1})$, we have
$d_{-m-n-1-\frac{1+3a}{2}}=0$ which means $\bar{\si}(I_{m})=0$. This also is a contradiction. So we obtain
$$\bar{\si}(L_{m})=\a^mL_m, \ \ \ \bar{\si}(I_{m})=c_mI_m, \ \ \bar{\si}(Y_{m+\frac{1}{2}})=f_mY_{m+\frac{1}{2}}, \ \ \forall m\in\Z.$$
where $\a, c_m, f_m\in\C^{*}$.\\
On the other hand, from the relations that$$[\bar{\si}(L_{m}),\bar{\si}(I_{n})]=-(n+a+bm)\bar{\si}(I_{m+n}),$$
$$[\bar{\si}(L_{m}),\bar{\si}(Y_{n+\frac{1}{2}})]=-(n+\dis\frac{1-m+a+bm}{2})\bar{\si}(Y_{m+n+\frac{1}{2}}),$$
$$[\bar{\si}(Y_{m+\frac{1}{2}}),\bar{\si}(Y_{n+\frac{1}{2}})]=(m-n)\bar{\si}(I_{m+n+1}),$$
we have$$(n+a+bm)(C_{m+n}-C_n\a^m)=0, \ \ \a^{m}f_n=f_{m+n}, \ \ f_mf_n=C_{m+n+1},$$
for all $m,n\in\Z$. It is easy to deduce that
$$C_m=\a^m\mu, \ \ \ f_m=\a^{m}\sqrt{\a\mu}, \ \ \forall m\in\Z,$$where $\mu$ is a nonzero complex number.
Therefore,
\begin{equation}\label{E4-1}
\bar{\si}(L_{m})= \a^{m} L_{ m},  \ \  \bar{\si}(I_{m})=\a^{m}\mu
I_{m}, \ \ \ \bar{\si}(Y_{m+\frac{1}{2}})=\a^{m}\sqrt{\a\mu} Y_{m+\frac{1}{2}} \ \ \forall\;m\in\Z.
\end{equation}
Conversely, if $\bar{\si}$ is a linear operator on ${\Wgab}$ satisfying
(\ref{E4-1}) for some  $\a,\mu\in\C^{*}$ , then  it is easy to check
that $\bar{\si}\in \Aut({\Wgab})$.
Denote by $\bar{\si}(\a, \mu)$ the automorphism of $\Wgab$
satisfying (\ref{E4-1}), then
\begin{equation}\label{E4-1-1}
\bar{\si}(\a_{1}, \mu_{1}) \bar{\si}(\a_{2},\mu_{2})
=\bar{\si}(\a_{1}\a_{2}, \mu_{1}\mu_{2}),
\end{equation}
and $\bar{\si}(\a_{1}, \mu_{1})=\bar{\si}(\a_{2}, \mu_{2})$ if and
only if $\a_{1}=\a_{2}, \mu_{1}= \mu_{2}$. Let
$$\mathfrak{a}_{1}=\{\bar{\si}_{\a, \mu} \; |\; \a, \mu\in\C^{*} \}. $$
By (\ref{E4-1-1}),  $\mathfrak{a}_{1}\cong \C^{*}\times \C^{*}$ is a
subgroup of $\Aut({\Wgab})$. On the other
hand, similar to the proof in \cite{GJP}, \cite{GJP1} or \cite{SJ}, $\mathfrak{I}\cong\C^{\infty}\rtimes\C^{\infty}$. Consequently, we
have
$$\Aut(\Wgab)=\mathfrak{I}\rtimes\mathfrak{a}_{1}\cong \mathfrak{I}\rtimes(\C^{*}\times \C^{*}),$$
where $a\not\in\Z$.
\end{proof}

\begin{lem}\label{L4.f2}
For $a=0$, $\Aut({\Wgab})\cong(\C^{\infty}\rtimes\C^{\infty})\rtimes(\Z_2\ltimes(\C^{*}\times \C^{*}))$.\\
where  $\C^{\infty}=\{(a_{i})_{i\in\Z}\;|\; a_{i}\in\C, {\rm{\; all
\; but\;  \; a \;  finite \; number \; of \;  the}} \; a_{i} \;
{\rm{ \; are \; zero}} \ \}$, $\Z_{2}=\Z/2\Z$, $\C^{*}=\C\backslash\{0\}$.
\end{lem}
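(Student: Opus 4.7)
The plan is to follow the strategy of Lemma \ref{L4.f1} line by line, keeping track of the one new phenomenon allowed by $a=0\in\Z$: the sign flip $\epsilon=-1$ now passes the eigenvalue tests, producing the extra $\Z_2$ factor that was absent when $a\notin\Z$.

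First, for any $\sigma\in\Aut(\Wgab)$, write $\sigma(L_0)=\epsilon L_0+\sum_i\lambda_iI_i+\sum_j\mu_jY_{j+\frac{1}{2}}$ with $\epsilon=\pm1$ coming from the classification of Witt automorphisms. Since $a=0$, $\ad L_0$ has nonzero eigenvalue on $I_i$ for $i\neq0$ and on every $Y_{j+\frac{1}{2}}$, so an inner automorphism $\tau$ built from $\exp(\lambda_i\epsilon^{-1}i^{-1}\ad I_i)$ ($i\neq0$) and $\exp(\mu_j\epsilon^{-1}(j+\frac{1}{2})^{-1}\ad Y_{j+\frac{1}{2}})$ gives $\bar\sigma:=\tau^{-1}\sigma$ with $\bar\sigma(L_0)=\epsilon L_0+\lambda_0I_0$. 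The residual $\lambda_0I_0$ will be absorbed below using that $\ad\bar\sigma(L_0)$ and $\epsilon\ad L_0$ share the same generalized eigenspace decomposition.

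Next, applying $[\bar\sigma(L_0),\bar\sigma(X)]$ to $X\in\{L_n,I_n,Y_{n+\frac{1}{2}}\}$ localizes each image: $\bar\sigma(L_n)\in\C L_{\epsilon n}\oplus\C I_{\epsilon n}$, $\bar\sigma(I_n)\in\C I_{\epsilon n}$, and $\bar\sigma(Y_{n+\frac{1}{2}})\in\C Y_{\epsilon n+\frac{\epsilon-1}{2}+\frac{1}{2}}$. The key point is that the eigenvalue condition $\epsilon(n_i+a)-n=0$, which in Lemma \ref{L4.f1} had no integer solution for $\epsilon=-1$, now admits $n_i=\epsilon n$ in both signs. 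Running the same recursion analysis using $[\bar\sigma(L_m),\bar\sigma(L_n)]=(m-n)\bar\sigma(L_{m+n})$ and the remaining bracket identities, together with the classification of Witt automorphisms, I obtain $\bar\sigma(L_m)=\epsilon\a^m L_{\epsilon m}$, $\bar\sigma(I_m)=\a^m\mu I_{\epsilon m}$, and $\bar\sigma(Y_{m+\frac{1}{2}})=\a^m\nu\,Y_{\epsilon m+\frac{\epsilon-1}{2}+\frac{1}{2}}$ with $\nu^2=\pm\a\mu$ (the sign determined by $\epsilon$), for some $\a,\mu\in\C^*$.

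Conversely, the $\epsilon=+1$ case yields the subgroup $\C^*\times\C^*$ exactly as in Lemma \ref{L4.f1}, while the $\epsilon=-1$ case is new: a direct check shows that the involution $\theta\colon L_m\mapsto-L_{-m},\,I_m\mapsto-I_{-m},\,Y_{m+\frac{1}{2}}\mapsto Y_{-(m+1)+\frac{1}{2}}$ is a genuine automorphism because $a=0$ makes the bracket $[L_m,I_n]=-nI_{m+n}$ and the $Y$--$Y$ pairing compatible with the flip (this check breaks for $a\notin\Z$). Conjugation $\theta\,\bar\sigma(\a,\mu)\,\theta^{-1}$ realizes a $\Z_2$-action on $\C^*\times\C^*$, yielding $\mathfrak{a}_1\cong\Z_2\ltimes(\C^*\times\C^*)$. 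Combined with $\mathfrak{I}\cong\C^\infty\rtimes\C^\infty$ from the beginning of the section, this gives the desired $\Aut(\Wgab)\cong\mathfrak{I}\rtimes(\Z_2\ltimes(\C^*\times\C^*))$. The main obstacle is removing the residual $\lambda_0I_0$ in $\bar\sigma(L_0)$ and any $I_{\epsilon n}$-tail in $\bar\sigma(L_n)$: for $b\neq0$ the non-triviality of $\ad I_0$ forces them to vanish outright, while for $b=0$ one must argue that the resulting family $\{\beta_n\}$ is a coboundary, removable by a further $\exp(\sum k_i\ad I_i)$, so that the outer derivations peculiar to $(0,0)$ in Lemma \ref{L3.6} do not exponentiate to extra connected components of $\Aut(\Wgab)$.
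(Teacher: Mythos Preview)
Your treatment of the $I$-tails in $\bar\sigma(L_m)$ has a real gap. You assert that for $b\neq 0$ these tails vanish outright and that for $b=0$ they are coboundaries removable by a further inner $\exp(\sum k_i\,\ad I_i)$. Both assertions are false. The analysis of $\bar\sigma|_{\Wab}$ (carried out in the paper by quoting \cite{SCY}) gives
\[
\bar\sigma(L_m)=\epsilon\alpha^m L_{\epsilon m}+\alpha^m p(m)\,I_{\epsilon m},
\]
with $p(m)=cm+d$ when $b=0$, $p(m)=m(cm+d)$ when $b=1$, and $p(m)=m\lambda$ otherwise. The free parameters $c,d$ here cannot be removed by any element of $\mathfrak{I}$: they are precisely the exponentials of the outer degree-$0$ derivations recorded in Lemma~\ref{L3.6}. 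So your last sentence has it exactly backwards---those outer derivations \emph{do} exponentiate to a subgroup $\mathfrak{c}\cong\C\times\C$ (respectively $\C$) of genuinely non-inner automorphisms.

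The stated isomorphism is nonetheless correct, but for a reason you never reach. The paper shows that $\mathfrak{c}$ commutes with $\mathfrak{I}$, that $\mathfrak{I}\mathfrak{c}$ is normal in $\Aut(\Wgab)$, and that $\Aut(\Wgab)=(\mathfrak{I}\mathfrak{c})\rtimes(\Z_2\ltimes(\C^*\times\C^*))$. The final step is the absorption $\mathfrak{I}\mathfrak{c}\cong\C^\infty\rtimes\C^\infty$, which holds only because enlarging $\C^\infty$ by a finite-dimensional abelian piece does not change its isomorphism type. Without isolating $\mathfrak{c}$ and making this absorption argument, your determination of $\Aut(\Wgab)$ is incomplete.
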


\begin{proof}
For any $\si\in\Aut(\Wgab)$, $\si(L_0)$ is the same as in lemma \ref{L4.f1}. Since $a=0$.  Let
$\tau_1=\prod\limits_{i=p}^{q}\exp(\dis\frac{\l_{i}}{\epsi i}\ad
I_{i})\prod\limits_{j=s}^{t}\exp(\dis\frac{l_{j}}{\epsi (j+\frac{1}{2})}\ad Y_{j+\frac{1}{2}})\in \mathfrak{I}$, where $i\neq 0$.
We use the same idea as in Lemma \ref{L4.f1}, then there exists $\tau\in\mathfrak{I}$ such that
$\si(L_{0})=\tau(\epsi L_{0})+\l_{0}I_{0}.$ Set
$\bar{\si}=\tau^{-1}\si$, then $\bar{\si}(L_{0})=\epsi
L_{0}+\l_{0}I_{0}.$  As the assumption in Lemma \ref{L4.f1}, for  any $n\neq
0, m\in\Z$, since $$[\bar{\si}(L_{0}),\bar{\si}(L_{n})]=-n\bar{\si}(L_{n}), \ \  [\bar{\si}(L_{0}),\bar{\si}(I_{m})]=-m\bar{\si}(I_{m})$$
we have
$$\mu_{m_j}[\epsi(m_j+\dis\frac{1}{2})-n]=0,\ \ \  d_{q}[\epsi(q+\dis\frac{1}{2})-n]=0.$$
This forces that $\mu_{m_j}=0, d_q=0$ for all $m_j, q\in\Z$. Therefore, $$\bar{\si}(\Wab)=\Wab.$$ On the other hand,
according to
$[\bar{\si}(L_{0}),\bar{\si}(Y_{m+\frac{1}{2}})]=-(m+\dis\frac{1}{2})\bar{\si}(Y_{m+\frac{1}{2}})$, we have
$$e_u(\epsi u-m-\dis\frac{1}{2})=0, \ \  f_v[\epsi (v+\dis\frac{1}{2})-(m+\dis\frac{1}{2})]=0.$$
So $e_u=0, v=\epsi (m+\dis\frac{1}{2})-\dis\frac{1}{2}$ and
$$\bar{\si}(Y_{m+\frac{1}{2}})=f_{\epsi (m+\frac{1}{2})-\frac{1}{2}}Y_{\epsi (m+\frac{1}{2})},$$
for all $m\in\Z$. Consequently, by \cite{SCY}, we have the following cases.
\\
{\bf Case 1} $b=0$. Assume
\begin{equation}\nonumber
\bar{\si}(L_{m})=\epsi \a^{m}L_{\epsi m}+\a^{m}(cm+d)I_{\epsi m}, \
\ \bar{\si}(I_{m})=\a^{m}\mu I_{\epsi m},
\end{equation}
\begin{equation}\nonumber
\bar{\si}(Y_{m+\frac{1}{2}})=f_{\epsi (m+\frac{1}{2})-\frac{1}{2}}Y_{\epsi (m+\frac{1}{2})}
\ \quad \ \forall\;m\in\Z,
\end{equation}where $\a, \mu, f_{\epsi (m+\frac{1}{2})-\frac{1}{2}}\in\C^{*}, c, d\in\C$.
By $$[\bar{\si}(L_{m}),\bar{\si}(Y_{n+\frac{1}{2}})]=-(n+\dis\frac{1-m}{2})\bar{\si}(Y_{m+n+\frac{1}{2}}), \ \
[\bar{\si}(Y_{m+\frac{1}{2}}),\bar{\si}(Y_{n+\frac{1}{2}})]=(m-n)\bar{\si}(I_{m+n+1}),$$
we have $$f_{\epsi (m+\frac{1}{2})-\frac{1}{2}}=\a^{m}f_{\epsi (0+\frac{1}{2})-\frac{1}{2}}, \ \
\epsi f_{\epsi (m+\frac{1}{2})-\frac{1}{2}}f_{\epsi (n+\frac{1}{2})-\frac{1}{2}}=\a\mu.$$
It is easy to deduce that $$f_{\epsi (m+\frac{1}{2})-\frac{1}{2}}=\a^m\sqrt{\epsi\a\mu}.$$
Therefore
\begin{equation}\label{E4-3}
\bar{\si}(L_{m})=\epsi \a^{m}L_{\epsi m}+\a^{m}(cm+d)I_{\epsi m}, \
\ \bar{\si}(I_{m})=\a^{m}\mu I_{\epsi m},
\end{equation}
\begin{equation}\label{E4-f3}
\bar{\si}(Y_{m+\frac{1}{2}})=\a^m\sqrt{\epsi\a\mu} Y_{\epsi (m+\frac{1}{2})}
\ \quad \ \forall\;m\in\Z.
\end{equation}
Denote by $\bar{\si}(\epsi, \a,  \mu, c, d)$ the automorphism of
$\W^g(0,0)$ satisfying (\ref{E4-3}) and (\ref{E4-f3}), then
\begin{equation}\label{E4-3-1}
\bar{\si}(\epsi_{1}, \a_{1}, \mu_{1}, c_{1}, d_{1})
\bar{\si}(\epsi_{2}, \a_{2}, \mu_{2}, c_{2}, d_{2})
=\bar{\si}(\epsi_{1}\epsi_{2}, \a^{\epsi_{2}}_{1}\a_{2},
\mu_{1}\mu_{2}, c_{1}+\mu_{1}c_{2}, \epsi_{2}d_{1}+\mu_{1}d_{2}),
\end{equation}
and $\bar{\si}(\epsi_{1}, \a_{1}, \mu_{1}, c_{1}, d_{1})
=\bar{\si}(\epsi_{2}, \a_{2}, \mu_{2}, c_{2}, d_{2})$ if and only if
$\epsi_{1}=\epsi_{2}$, $\a_{1}=\a_{2}$, $\mu_{1}= \mu_{2}$,
$c_{1}=c_{2}$, $d_{1}=d_{2}$. Let
$$\bar{\si}_{\epsi}=\bar{\si}(\epsi, 1,  1, 0, 0),\ \
\bar{\tau}_{\a, \mu}=\bar{\si}(1, \a, \mu, 0, 0),\ \ \bar{\si}_{c,
d}=\bar{\si}(1, 1, 1, c, d)$$
$$\mathfrak{a}_{3}=\{\bar{\si}_{\epsi} \;|\; \epsi\in\{\pm 1\} \}, \
\ \mathfrak{b}_{3}=\{\bar{\tau}_{\a, \mu}\;|\;\a, \mu\in\C^{*}\}, \
\ \mathfrak{c}_{3}=\{\bar{\si}_{c, d}\;|\; c,d\in\C\}.$$ From
(\ref{E4-3-1}) we can see that $\mathfrak{a}_{3}$, $
\mathfrak{b}_{3} $  and $ \mathfrak{c}_{3} $ are all subgroups of
$\Aut({\W^g(0,0)})$, and $\mathfrak{a}_{3}\cong\Z_{2},
\mathfrak{b}_{3}\cong\C^{*}\times\C^{*},
\mathfrak{c}_{3}\cong\C\times\C$. Using (\ref{E4-0}), (\ref{E4-f1}) and
(\ref{E4-3-1}), one can deduce that $\mathfrak{I}$ commutates with
$\mathfrak{c}_{3}$. By (\ref{E4-3-1}), we have the following
relations:
$$\bar{\si}_{\epsi}\bar{\tau}_{\a, \mu}=\bar{\si}(\epsi, \a, \mu, 0, 0),\ \
\bar{\tau}_{\a, \mu}\bar{\si}_{c, d}=\bar{\si}(1, \a, \mu, \mu c,\mu
d), \ \ \bar{\si}_{\epsi}\bar{\si}_{c,d}=\bar{\si}(\epsi,1,1,c,d),$$
$$\bar{\tau}_{\a, \mu}\bar{\si}_{\epsi}=\bar{\si}(\epsi, \a^{\epsi}, \mu, 0,0), \ \
\bar{\si}_{c, d}\bar{\tau}_{\a, \mu}=\bar{\si}(1, \a, \mu, c, d), \
\ \bar{\si}_{c,d}\bar{\si}_{\epsi}=\bar{\si}(\epsi, 1, 1, c,\epsi
d),$$
$$\bar{\tau}_{\a, \mu}^{-1}\bar{\si}_{\epsi}\bar{\tau}_{\a, \mu}=\bar{\si}(\epsi, \a^{1-\epsi}, 1, 0, 0), \ \
\bar{\si}_{\epsi}\bar{\tau}_{\a,\mu}\bar{\si}_{\epsi}=\bar{\tau}_{\a^{\epsi},\mu},$$
$$\bar{\si}_{c, d}^{-1}\bar{\tau}_{\a, \mu}\bar{\si}_{c, d}=\bar{\si}(1, \a, \mu, (\mu-1)c,(\mu-1)d),
\ \ \bar{\tau}_{\a, \mu}^{-1}\bar{\si}_{c, d}\bar{\tau}_{\a,
\mu}=\bar{\si}_{\mu^{-1}c, \mu^{-1}d},$$
$$\bar{\si}_{c, d}^{-1}\bar{\si}_{\epsi}\bar{\si}_{c, d}=\bar{\si}(\epsi, 1, 1, 0,
(1-\epsi)d), \ \
\bar{\si}_{\epsi}\bar{\si}_{c,d}\bar{\si}_{\epsi}=\bar{\si}_{c,\epsi
d}.$$ Therefore, $\mathfrak{I}\mathfrak{c}_{3}$ is an abelian normal
subgroup of $\Aut(\W^g(0,0))$. Similar to \cite{SJ}, we have
$$\Aut(\W(0,0))=(\mathfrak{I}\mathfrak{c}_{3})\rtimes(\mathfrak{a}_{3}\ltimes\mathfrak{b}_{3})
\cong (\C^{\infty}\rtimes\C^{\infty})\rtimes(\Z_{2}\ltimes(\C^{*}\times\C^{*})).$$
\\
{\bf Case 2} $b=1$. According to \cite{SCY}, we may assume
\begin{equation}\nonumber
\bar{\si}(L_{m})=\epsi \a^{m}L_{\epsi m}+\a^{m}m[mc+d]I_{\epsi m},
 \ \ \bar{\si}(I_{m})=\a^{m}\mu I_{\epsi m},
\end{equation}
\begin{equation}\nonumber
\bar{\si}(Y_{m+\frac{1}{2}})=f_{\epsi (m+\frac{1}{2})-\frac{1}{2}}Y_{\epsi (m+\frac{1}{2})}, \ \forall\;m\in\Z.
\end{equation}where $\a, \mu, f_{\epsi (m+\frac{1}{2})-\frac{1}{2}}\in\C^{*}, c, d\in\C$.
Similarly, we can deduce that $$f_{\epsi (m+\frac{1}{2})-\frac{1}{2}}=\a^m\sqrt{\epsi\a\mu}.$$
Therefore
\begin{equation}\label{E4-4}
\bar{\si}(L_{m})=\epsi \a^{m}L_{\epsi m}+\a^{m}m(cm+d)I_{\epsi m}, \
\ \bar{\si}(I_{m})=\a^{m}\mu I_{\epsi m},
\end{equation}
\begin{equation}\label{E4-f4}
\bar{\si}(Y_{m+\frac{1}{2}})=\a^m\sqrt{\epsi\a\mu}\ Y_{\epsi (m+\frac{1}{2})}
\ \quad \ \forall\;m\in\Z.
\end{equation}
Denote by $\bar{\si}(\epsi, \a,  \mu, c, d)$ the automorphism of
$\W^g(0,1)$ satisfying (\ref{E4-4}) and (\ref{E4-f4}), then
\begin{equation}\label{E4-4-1}
\bar{\si}(\epsi_{1}, \a_{1}, \mu_{1}, c_{1}, d_{1})
\bar{\si}(\epsi_{2}, \a_{2}, \mu_{2}, c_{2}, d_{2})
=\bar{\si}(\epsi_{1}\epsi_{2}, \a^{\epsi_{2}}_{1}\a_{2},
\mu_{1}\mu_{2}, \epsi_{2}c_{1}+\mu_{1}c_{2}, d_{1}+\mu_{1}d_{2}),
\end{equation}
and $\bar{\si}(\epsi_{1}, \a_{1}, \mu_{1}, c_{1}, d_{1})
=\bar{\si}(\epsi_{2}, \a_{2}, \mu_{2}, c_{2}, d_{2})$ if and only if
$\epsi_{1}=\epsi_{2}$, $\a_{1}=\a_{2}$, $\mu_{1}= \mu_{2}$,
$c_{1}=c_{2}$, $d_{1}=d_{2}$. Let
$$\bar{\si}_{\epsi}=\bar{\si}(\epsi, 1,  1, 0, 0),\ \
\bar{\tau}_{\a, \mu}=\bar{\si}(1, \a, \mu, 0, 0),\ \ \bar{\si}_{c,
d}=\bar{\si}(1, 1, 1, c, d)$$
$$\mathfrak{a}_{4}=\{\bar{\si}_{\epsi} \;|\; \epsi\in\{\pm 1\} \}, \
\ \mathfrak{b}_{4}=\{\bar{\tau}_{\a, \mu}\;|\;\a, \mu\in\C^{*}\}, \
\ \mathfrak{c}_{4}=\{\bar{\si}_{c, d}\;|\; c,d\in\C\}.$$ From
(\ref{E4-4-1}) we can see that $\mathfrak{a}_{4}$, $
\mathfrak{b}_{4} $  and $ \mathfrak{c}_{4} $ are all subgroups of
$\Aut({\W^g(0,1)})$, and $\mathfrak{a}_{4}\cong\Z_{2},
\mathfrak{b}_{4}\cong\C^{*}\times\C^{*},
\mathfrak{c}_{4}\cong\C\times\C$. Similar to the proof above, using
(\ref{E4-0}), (\ref{E4-f1}) and (\ref{E4-4-1}), we also have $\mathfrak{I}$
commutates with $\mathfrak{c}_{4}$. By (\ref{E4-4-1}), we have the
following relations:
$$\bar{\si}_{\epsi}\bar{\tau}_{\a, \mu}=\bar{\si}(\epsi, \a, \mu, 0, 0),\ \
\bar{\tau}_{\a, \mu}\bar{\si}_{c, d}=\bar{\si}(1, \a, \mu, \mu c,\mu
d), \ \ \bar{\si}_{\epsi}\bar{\si}_{c,d}=\bar{\si}(\epsi,1,1,c,d),$$
$$\bar{\tau}_{\a, \mu}\bar{\si}_{\epsi}=\bar{\si}(\epsi, \a^{\epsi}, \mu, 0,0), \ \
\bar{\si}_{c, d}\bar{\tau}_{\a, \mu}=\bar{\si}(1, \a, \mu, c, d), \
\ \bar{\si}_{c,d}\bar{\si}_{\epsi}=\bar{\si}(\epsi, 1, 1, \epsi c,
d),$$
$$\bar{\tau}_{\a, \mu}^{-1}\bar{\si}_{\epsi}\bar{\tau}_{\a, \mu}=\bar{\si}(\epsi, \a^{1-\epsi}, 1, 0, 0), \ \
\bar{\si}_{\epsi}\bar{\tau}_{\a,\mu}\bar{\si}_{\epsi}=\bar{\tau}_{\a^{\epsi},\mu},$$
$$\bar{\si}_{c, d}^{-1}\bar{\tau}_{\a, \mu}\bar{\si}_{c, d}=\bar{\si}(1, \a, \mu, (\mu-1)c,(\mu-1)d),
\ \ \bar{\tau}_{\a, \mu}^{-1}\bar{\si}_{c, d}\bar{\tau}_{\a,
\mu}=\bar{\si}_{\mu^{-1}c, \mu^{-1}d},$$
$$\bar{\si}_{c, d}^{-1}\bar{\si}_{\epsi}\bar{\si}_{c, d}=\bar{\si}(\epsi, 1, 1,
(1-\epsi)c, 0), \ \
\bar{\si}_{\epsi}\bar{\si}_{c,d}\bar{\si}_{\epsi}=\bar{\si}_{\epsi
c, d}.$$ Therefore, $\mathfrak{I}\mathfrak{c}_{4}$ is an abelian
normal subgroup of $\Aut(\W^g(0,1))$ and
$$\Aut(\W^g(0,1))=(\mathfrak{I}\mathfrak{c}_{4})\rtimes(\mathfrak{a}_{4}\ltimes\mathfrak{b}_{4})
\cong (\C^{\infty}\rtimes\C^{\infty})\rtimes(\Z_{2}\ltimes(\C^{*}\times\C^{*})).$$
\\
{\bf Case 3} $b\neq 0, 1$. By \cite{SCY}, we assume
$$\bar{\si}(L_{m})=\epsi \a^{m}L_{\epsi m}+\a^{m}m\l_{\epsi}I_{\epsi
m}, \ \ \bar{\si}(I_{m})=\a^{m}\mu I_{\epsi m},$$
$$\bar{\si}(Y_{m+\frac{1}{2}})=f_{\epsi (m+\frac{1}{2})-\frac{1}{2}}Y_{\epsi (m+\frac{1}{2})}, \ \forall\;m\in\Z.$$
Similarly, we still have $$f_{\epsi (m+\frac{1}{2})-\frac{1}{2}}=\a^m\sqrt{\epsi\a\mu}, \ \ \ \forall m\in\Z.$$
Therefore
\begin{equation}\label{E4-5}
\bar{\si}(L_{m})=\epsi \a^{m}L_{\epsi m}+\a^{m}m(cm+d)I_{\epsi m}, \
\ \bar{\si}(I_{m})=\a^{m}\mu I_{\epsi m},
\end{equation}
\begin{equation}\label{E4-f5}
\bar{\si}(Y_{m+\frac{1}{2}})=\a^m\sqrt{\epsi\a\mu}\ Y_{\epsi (m+\frac{1}{2})}
\ \quad \ \forall\;m\in\Z.
\end{equation}
By  the automorphism group of $\iv(0,-1)$ in \cite{GJP}, we have
$$\Aut(\W^g(0,b))\cong(\C^{\infty}\rtimes\C^{\infty})\rtimes(\Z_{2}\ltimes(\C^{*}\times\C^{*})),$$
where  $b\neq 0,1$.
\end{proof}
\begin{lem}\label{L4.f3}
For $a=1$, $\Aut({\Wgab})\cong(\C^{\infty}\rtimes\C^{\infty})\rtimes(\Z_2\ltimes(\C^{*}\times \C^{*}))$.\\
where  $\C^{\infty}=\{(a_{i})_{i\in\Z}\;|\; a_{i}\in\C, {\rm{\; all
\; but\;  \; a \;  finite \; number \; of \;  the}} \; a_{i} \;
{\rm{ \; are \; zero}} \ \}$, $\Z_{2}=\Z/2\Z$, $\C^{*}=\C\backslash\{0\}$.
\end{lem}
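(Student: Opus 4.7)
The plan is to follow the template established by the proofs of Lemmas \ref{L4.f1} and \ref{L4.f2}, making the modifications forced by $a = 1 \in \Z$. First I would take an arbitrary $\si \in \Aut(\Wgab)$ and write
$$\si(L_{0}) = \epsi L_{0} + \sum_{i=p}^{q} \lambda_{i} I_{i} + \sum_{j=s}^{t} l_{j} Y_{j+\frac{1}{2}}.$$
For $a = 1$ one has $[L_{0}, I_{i}] = -(i+1) I_{i}$ and $[L_{0}, Y_{j+\frac{1}{2}}] = -(j+1) Y_{j+\frac{1}{2}}$, so applying the inner automorphism
$$\tau = \prod_{i \neq -1}\exp\Bigl(\frac{\lambda_{i}}{\epsi(i+1)}\ad I_{i}\Bigr) \prod_{j \neq -1}\exp\Bigl(\frac{l_{j}}{\epsi(j+1)}\ad Y_{j+\frac{1}{2}}\Bigr)$$
reduces us to $\bar\si = \tau^{-1}\si$ with $\bar\si(L_{0}) = \epsi L_{0} + \lambda_{-1} I_{-1} + l_{-1} Y_{-1/2}$. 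The residual terms $I_{-1}$ and $Y_{-1/2}$ are annihilated by $\ad L_{0}$ when $a=1$, so they cannot be cleared by $\mathfrak{I}$; I expect them to merge naturally into the one-parameter families $\mathfrak{c}$ that already appear in Lemma \ref{L4.f2}.

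Next, using Lemma \ref{L4.1}, I would write each of $\bar\si(L_{n}), \bar\si(I_{m}), \bar\si(Y_{m+\frac{1}{2}})$ as a finite sum of $I_{k}$'s and $Y_{k+\frac{1}{2}}$'s plus (for $L_{n}$) a term $\alpha^{n}\epsi L_{\epsi n}$, and impose the commutators with $\bar\si(L_{0})$. The crucial subtlety specific to $a=1$ is that $I_{m}$ and $Y_{m+\frac{1}{2}}$ share the $\ad L_{0}$-eigenvalue $-(m+1)$, so eigenvalue analysis alone cannot separate them and one must allow a priori mixing. This mixing is ruled out by exploiting the abelian bracket $[I_{m}, I_{n}] = 0$ together with $[Y_{m+\frac{1}{2}}, Y_{n+\frac{1}{2}}] = (m-n) I_{m+n+1}$: a nonzero $Y$-component in $\bar\si(I_{m})$ would produce a nonzero $I$-term in $[\bar\si(I_{m}), \bar\si(I_{n})]$, which forces such components to vanish. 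A similar argument rules out $I$-components in $\bar\si(Y_{m+\frac{1}{2}})$. Because $a = 1 \in \Z$, both $\epsi = +1$ and $\epsi = -1$ survive the eigenvalue matching (unlike the $a \notin \Z$ case of Lemma \ref{L4.f1}), which is precisely the source of the $\Z_{2}$ factor in the final answer.

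After the mixing is excluded, I would invoke the classification of $\Wab$-automorphisms from \cite{SCY} (equivalently, the analysis performed in Lemma \ref{L4.f2}) to obtain
$$\bar\si(L_{m}) = \epsi\alpha^{m} L_{\epsi m} + \alpha^{m} P_{b}(m)\, I_{\epsi m}, \quad \bar\si(I_{m}) = \alpha^{m}\mu\, I_{\epsi m}, \quad \bar\si(Y_{m+\frac{1}{2}}) = \alpha^{m}\sqrt{\epsi\alpha\mu}\, Y_{\epsi(m+\frac{1}{2})},$$
where the polynomial $P_{b}(m)$ (of the shape $cm + d$ or $m(cm+d)$ depending on $b$) encodes the contribution of the $I_{-1}$ residue, while the scalar $\sqrt{\epsi\alpha\mu}$ is fixed up to sign by $[Y_{m+\frac{1}{2}}, Y_{n+\frac{1}{2}}] = (m-n)I_{m+n+1}$. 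A routine check using $[L_{m}, Y_{n+\frac{1}{2}}]$ then shows that the $l_{-1} Y_{-1/2}$ residue must vanish (or is absorbed in the inner group), confirming that no further parameters arise.

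Finally, with $\bar\si(\epsi, \alpha, \mu, c, d)$ in hand, I would compute the composition law, which should be formally identical to (\ref{E4-3-1}) or (\ref{E4-4-1}), and identify the subgroups $\mathfrak{a} \cong \Z_{2}$, $\mathfrak{b} \cong \C^{*} \times \C^{*}$, $\mathfrak{c} \cong \C \times \C$ together with $\mathfrak{I}$. The same conjugation relations as in Lemma \ref{L4.f2} show that $\mathfrak{I}\mathfrak{c}$ is an abelian normal subgroup commuting with $\mathfrak{c}$, and that $\mathfrak{a}$ acts on $\mathfrak{b}$ by inversion. Combining these yields
$$\Aut(\Wgab) = (\mathfrak{I}\mathfrak{c}) \rtimes (\mathfrak{a} \ltimes \mathfrak{b}) \cong (\C^{\infty} \rtimes \C^{\infty}) \rtimes (\Z_{2} \ltimes (\C^{*} \times \C^{*})).$$
The main obstacle is the careful bookkeeping for the $I_{-1}$ and $Y_{-1/2}$ residuals in $\bar\si(L_{0})$, together with ruling out the mixing between the $I$- and $Y$-parts that is invisible to $\ad L_{0}$ alone; everything else proceeds by mild variation of the $a = 0$ arguments.
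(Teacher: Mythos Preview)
Your overall strategy matches the paper's, but two concrete steps in your outline are wrong, and they are not mere bookkeeping.

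First, the eigenvalue matching for $a=1$ does not give the indices you wrote. Since $[L_0,I_n]=-(n+1)I_n$ and $[L_0,Y_{n+\frac12}]=-(n+1)Y_{n+\frac12}$, the condition $[\bar\si(L_0),\bar\si(L_n)]=-n\bar\si(L_n)$ forces the $I$- and $Y$-components of $\bar\si(L_n)$ to live in degree $\epsi n-1$, not $\epsi n$; likewise $\bar\si(I_m)$ lands in degree $\epsi(m+1)-1$ and $\bar\si(Y_{m+\frac12})$ in $Y_{\epsi(m+1)-1+\frac12}$ and $I_{\epsi(m+1)-1}$. So your displayed formulas $\bar\si(I_m)=\alpha^m\mu I_{\epsi m}$ etc.\ are off by this shift, and you cannot simply transplant the $a=0$ classification from \cite{SCY}.

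Second, and more seriously, your ``similar argument'' ruling out $I$-components in $\bar\si(Y_{m+\frac12})$ does not exist. The relation $[\bar\si(I_m),\bar\si(I_n)]=0$ indeed kills $Y$-components in $\bar\si(I_m)$, but nothing analogous forces the $I$-part of $\bar\si(Y_{m+\frac12})$ to vanish: both $[I,I]$ and $[I,Y]$ are zero, so an $I$-term in $\bar\si(Y)$ is invisible in $[\bar\si(Y),\bar\si(Y)]$ and $[\bar\si(I),\bar\si(Y)]$. In fact the paper shows that for generic $b$ one has
\[
\bar\si(L_n)=\epsi\alpha^nL_{\epsi n}+\alpha^n n((n-1)c+d)I_{\epsi n-1}+\alpha^n n\sqrt{\epsi(1-b)c}\,Y_{\epsi n-\frac12},
\]
\[
\bar\si(Y_{n+\frac12})=\alpha^n\tfrac{-2\epsi(n+1)\sqrt{(1-b)c\alpha\mu}}{b-1}\,I_{\epsi(n+1)-1}+\alpha^n\sqrt{\epsi\alpha\mu}\,Y_{\epsi(n+1)-\frac12},
\]
so both a $Y$-component in $\bar\si(L_n)$ and an $I$-component in $\bar\si(Y_{n+\frac12})$ genuinely survive. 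Determining these extra coefficients requires a lengthy case split on $b$ (the paper treats $b=0,1,2,3,-1,5$ separately), after which the additional parameters are absorbed into $\mathfrak{I}\mathfrak{c}$ and the stated group structure follows. Your outline skips precisely this part of the work.
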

\begin{proof}
For any $\si\in\Aut(\W^g(1,b))$, similar to the ideas in lemma \ref{L4.f1}-\ref{L4.f2}, there exist $\tau\in\mathfrak{I}$ and $\bar{\si}=\tau^{-1}\si$. We may assume that
$$\bar{\si}(L_{0})=\epsi L_{0}+\l_{-1}I_{-1}+l_{-1}Y_{-1+\frac{1}{2}}, \ \bar{\si}(L_{n})=\epsi \a^{n}L_{\epsi n}+\a^{n}\sum\l_{n_i} I_{n_i}
+\a^{n}\sum\mu_{m_j} Y_{m_j+\frac{1}{2}}, \ \forall n\neq 0,$$
$$\bar{\si}(I_{n})=\sum c_p I_{p}+\sum d_q Y_{q+\frac{1}{2}}, \ \bar{\si}(Y_{n+\frac{1}{2}})=\a^{n}\sum e_u I_{u}+\sum f_v Y_{v+\frac{1}{2}}, \ \forall\;n\in\Z,$$
where each $\sum$ formula
is of finite terms and $\l_{n_{i}}, \mu_{m_{j}}, c_{p}, d_{q}, e_{u}, f_{v}\in\C, n_{i}, m_{j}, p,
q, u, v\in\Z$, $\a\in\C^*, \epsi=\pm 1$.
By identities
$$[\bar{\si}(L_0), \bar{\si}(L_n)]=-n\bar{\si}(L_n), \ \ [\bar{\si}(L_0), \bar{\si}(I_n)]=-(n+1)\bar{\si}(I_n),$$
$$[\bar{\si}(L_0), \bar{\si}(Y_{n+\frac{1}{2}})]=-(n+1)\bar{\si}(Y_{n+\frac{1}{2}}),$$
we have $$\mu_{m_j}[\epsi (m_j+1)-n]=0, \ \ \; \l_{-1}bnI_{\epsi n-1}=\sum [\epsi(n_i+1)-n]\l_{n_i}I_{n_i}.$$
$$d_q [\epsi (q+1)-(n+1)]=0, \ \ \epsi\sum c_p(p+1)I_p+\sum l_{-1}d_q(q+1)I_q=(n+1)\sum c_pI_p,$$
$$e_u [\epsi (u+1)-(n+1)]=0, \ \ \ \; f_v [\epsi (v+1)-(n+1)]=0.$$
So  $$m_j=\epsi n-1, \ \l_{-1}b=0, \ n_i=\epsi n-1,$$ $$l_{-1}=0, \ \ p=q=\epsi (n+1)-1, \ \ u=v=\epsi (n+1)-1,$$ and
$$\bar{\si}(L_{0})=\epsi L_{0}+\l_{-1}I_{-1}, \ \ \bar{\si}(L_{n})=\epsi \a^{n}L_{\epsi n}+\a^{n}\l_{\epsi n-1} I_{\epsi n-1}
+\a^{n}\mu_{\epsi n-1} Y_{\epsi n-1+\frac{1}{2}},$$
$$\bar{\si}(I_{n})=c_{\epsi (n+1)-1} I_{\epsi (n+1)-1}+d_{\epsi (n+1)-1} Y_{\epsi (n+1)-1+\frac{1}{2}},$$
$$\bar{\si}(Y_{n+\frac{1}{2}})=\a^{n}e_{\epsi (n+1)-1} I_{\epsi (n+1)-1}+f_{\epsi (n+1)-1} Y_{\epsi (n+1)-1+\frac{1}{2}}.$$
On the other hand, for $m\neq n$, by $[\bar{\si}(I_m), \bar{\si}(I_n)]=0,$
we have $$d_{\epsi (m+1)-1}d_{\epsi (n+1)-1}\epsi (m-n)=0.$$ So $$d_{\epsi (n+1)-1}=0$$
 for all $n\in\Z$. Consequently, by $[\bar{\si}(L_m), \bar{\si}(I_n)]=-(n+1+bm)\bar{\si}(I_{m+n}),$ we have
 $$(n+1+bm)(\a^{m}c_{\epsi (n+1)-1}-c_{\epsi (m+n+1)-1})=0.$$ We can deduce that
 $$c_{\epsi (m+1)-1}=\a^{m}\mu,$$ So \begin{equation}\bar{\si}(I_m)=\a^{m}\mu I_{\epsi (m+1)-1},\ \ \forall m\in\Z\end{equation}
 where $\mu$ is a fixed nonzero complex number.
For $m\neq n$, by $$[\bar{\si}(L_m), \bar{\si}(L_n)]=(m-n)\bar{\si}(L_{m+n}), \ \ [\bar{\si}(L_n), \bar{\si}(Y_{m+\frac{1}{2}})]=-(m+1+\dis\frac{bn-n}{2})\bar{\si}(Y_{m+n+\frac{1}{2}}),$$
$$[\bar{\si}(Y_{m+\frac{1}{2}}), \bar{\si}(Y_{n+\frac{1}{2}})]=(m-n)\bar{\si}(I_{m+n+1}),$$ we have
\begin{equation}\label{ef-1}
\mu_{\epsi m-1}[m+\dis\frac{(b-1)n}{2}]-\mu_{\epsi n-1}[n+\dis\frac{(b-1)m}{2}]=(m-n)\mu_{\epsi (m+n)-1},
\end{equation}
\begin{equation}\label{ef-2}
\mu_{\epsi m-1}\mu_{\epsi n-1}\epsi(m-n)+\l_{\epsi m-1}(m+bn)-\l_{\epsi n-1}(n+bm)=(m-n)\l_{\epsi (m+n)-1},
\end{equation}
\begin{equation}\label{ef-3}
(m+1+\dis\frac{(b-1)n}{2})\a^{m+n}e_{\epsi (m+n+1)-1}-(m+1+b n)\a^{m+n}e_{\epsi (m+1)-1}=\epsi(m+1-n)\a^{n}\mu_{\epsi n-1}f_{\epsi (m+1)-1},
\end{equation}
\begin{equation}\label{ef-4}
\a^{n}f_{\epsi (m+1)-1}(m+1+\dis\frac{(b-1)n}{2})=(m+1+\dis\frac{(b-1)n}{2})f_{\epsi (m+n+1)-1},
\end{equation}
\begin{equation}\label{ef-5}
f_{\epsi (m+1)-1}f_{\epsi (n+1)-1}\epsi=\a^{m+n+1}\mu.
\end{equation}
Let $n=0$ in (\ref{ef-3}) and (\ref{ef-5}), we have
$$ \mu _{-1}=0, \ \  \ f_{\epsi (m+1)-1}=\epsi\a^{m+1}\mu f_{\epsi -1}^{-1}, \ \ \forall m\in\Z,$$
where $f_{\epsi -1}^{-1}=f$ is a fixed nonzero complex number.
From (\ref{ef-4}), we can deduce $f_{\epsi (n+1)-1}=\a^{n}f_{\epsi -1}$ for all $n\in\Z$. Taking it into (\ref{ef-5}), we get
$f_{\epsi -1}=\sqrt{\epsi\a\mu}$, then \begin{equation}\label{ef-10}f_{\epsi (n+1)-1}=\a^{n}\sqrt{\epsi\a\mu}, \ \ \; \forall n\in\Z.\end{equation}
{\bf Case 1:} $b=0$. Let $n=-m, n=1$ in (\ref{ef-1}) respectively, we get
\begin{equation}\label{ef-6}
\mu_{\epsi m-1}+\mu_{\epsi (-m)-1}=0,
\end{equation}
\begin{equation}\label{ef-7}
(2m-1)\mu_{\epsi m-1}-(2-m)\mu_{\epsi-1}=(2m-2)\mu_{\epsi(m+1)-1}, \ \ \forall m\in\Z.
\end{equation}
Using induction on $m\geq 2$ in (\ref{ef-7}), we obtain
\begin{equation}\label{ef-8}
\mu_{\epsi m-1}=\dis\frac{(2m-3)!!}{2^{m-2}(m-2)!}(\mu_{\epsi 2-1}-2\mu_{\epsi -1})+m\mu_{\epsi -1}, \ \ m\geq 2.\end{equation}
On the other hand, let $m=-3, n=1$ in (\ref{ef-1}) and use (\ref{ef-6}), (\ref{ef-8}), we have $\mu_{\epsi 2-1}-2\mu_{\epsi -1}=0$.
So \begin{equation}\label{ef-9}\mu_{\epsi m-1}=m\mu_{\epsi -1}, \ \ \ \forall m\in\Z.\end{equation}
Let $m=-1$ in (\ref{ef-3}) and use (\ref{ef-9}), (\ref{ef-10}), we obtain
\begin{equation}\label{ef-11}e_{\epsi (n+1)-1}=2\epsi (n+1)\mu_{\epsi -1}\sqrt{\epsi\a\mu}, \ \ \forall n\in\Z.
\end{equation}
From (\ref{ef-2}) and (\ref{ef-9}), we have
\begin{equation}\label{ef-12}mn\epsi(m-n)\mu_{\epsi-1}^{2}+m\l_{\epsi m-1}-n\l_{\epsi n-1}=(m-n)\l_{\epsi (m+n)-1}.
\end{equation}
Let $m+n=0$ in (\ref{ef-12}), we have \begin{equation}\label{ef-14}
\l_{\epsi m-1}+\l_{\epsi (-m)-1}=2\l_{-1}+2m^2\epsi\mu_{\epsi-1}^{2}, \ \ \forall m\in\Z.\end{equation}
Let $n=1$ in (\ref{ef-12}) and use induction on m, we obtain
\begin{equation}\label{ef-13}
\l_{\epsi m-1}=(m-1)(\l_{\epsi 2-1}-\l_{\epsi -1})+(m-1)(m-2)\epsi\mu_{\epsi-1}^{2}+\l_{\epsi -1}, \ \ \ m\geq 1.
\end{equation}
On the other hand, let $m=-2, n=1$ and $m=-5, n=3$ in (\ref{ef-12}) respectively and use (\ref{ef-14}), (\ref{ef-13}), then
$$-4\epsi\mu_{\epsi-1}^{2}+2\l_{-1}+2\l_{\epsi 2 -1}-4\l_{\epsi -1}=0,\ \ -4\epsi\mu_{\epsi-1}^{2}+3\l_{-1}+3\l_{\epsi 2 -1}-6\l_{\epsi -1}=0.$$
Combine the two identities, we get $$\l_{-1}=-\l_{\epsi 2 -1}+2\l_{\epsi -1}\ \ \ \; \mbox{and} \ \ \ \ \; \mu_{\epsi-1}=0.$$
So $$\l_{\epsi m-1}=m(\l_{\epsi -1}-\l_{-1})+\l_{-1}, \ \ \ \forall m\in\Z.$$
Set $\l_{\epsi -1}-\l_{-1}=c, \l_{-1}=d$, then
\begin{equation}\label{ef-15}\bar{\si}(L_{n})=\epsi \a^{n}L_{\epsi n}+\a^{n}(nc+d)I_{\epsi n-1},\end{equation}
\begin{equation}\label{ef-16}\bar{\si}(I_n)=\a^{n}\mu I_{\epsi (n+1)-1},\ \ \ \bar{\si}(Y_{n+\frac{1}{2}})=\a^{n}\sqrt{\epsi\a\mu}\ Y_{\epsi (n+1)-1+\frac{1}{2}},\end{equation}
for all $n\in\Z.$ Where $\a, \mu\in\C^*, c, d\in\C, \epsi=\pm{1}$.
Denote by $\bar{\si}(\epsi, \a,  \mu, c, d)$ the automorphism of
$\W^g(1,0)$ satisfying (\ref{ef-15}) and (\ref{ef-16}), then
\begin{equation}\label{ef-17}
\bar{\si}(\epsi_{1}, \a_{1}, \mu_{1}, c_{1}, d_{1})
\bar{\si}(\epsi_{2}, \a_{2}, \mu_{2}, c_{2}, d_{2})
=\bar{\si}(\epsi_{1}\epsi_{2}, \a^{\epsi_{2}}_{1}\a_{2},
\a^{\epsi_2-1}_{1}\mu_{1}\mu_{2}, c_{1}+\a_1^{-1}\mu_{1}c_{2}, \epsi_{2}d_{1}+\a_1^{-1}\mu_{1}d_{2}),
\end{equation}
and $\bar{\si}(\epsi_{1}, \a_{1}, \mu_{1}, c_{1}, d_{1})
=\bar{\si}(\epsi_{2}, \a_{2}, \mu_{2}, c_{2}, d_{2})$ if and only if
$\epsi_{1}=\epsi_{2}$, $\a_{1}=\a_{2}$, $\mu_{1}= \mu_{2}$,
$c_{1}=c_{2}$, $d_{1}=d_{2}$. Let
$$\bar{\si}_{\epsi}=\bar{\si}(\epsi, 1,  1, 0, 0),\ \
\bar{\tau}_{\a, \mu}=\bar{\si}(1, \a, \mu, 0, 0),\ \ \bar{\si}_{c,
d}=\bar{\si}(1, 1, 1, c, d)$$
$$\mathfrak{a}_{3}=\{\bar{\si}_{\epsi} \;|\; \epsi\in\{\pm 1\} \}, \
\ \mathfrak{b}_{3}=\{\bar{\tau}_{\a, \mu}\;|\;\a, \mu\in\C^{*}\}, \
\ \mathfrak{c}_{3}=\{\bar{\si}_{c, d}\;|\; c,d\in\C\}.$$
Similar to the discussion in the lemma \ref{L4.f2}, we have the same results
$$\Aut(\W^g(1,0))=(\mathfrak{I}\mathfrak{c}_{3})\rtimes(\mathfrak{a}_{3}\ltimes\mathfrak{b}_{3})
\cong (\C^{\infty}\rtimes\C^{\infty})\rtimes(\Z_{2}\ltimes(\C^{*}\times\C^{*})).$$
{\bf Case 2:} $b=1$. We have $\l_{-1}=0$, so $\bar{\si}(L_0)=\epsi L_0$. Let $m+n=0$ in (\ref{ef-1}) and (\ref{ef-2}), we have
\begin{eqnarray}\label{ef-18}\mu_{\epsi m-1}+\mu_{\epsi (-m)-1}&=&0,\\
\label{ef-19}
\mu_{\epsi m-1}\mu_{\epsi(-m)-1}&=&0,
\end{eqnarray} for all $m\in\Z$.
From (\ref{ef-18}) and (\ref{ef-19}), we can deduce that $$ \mu_{\epsi m-1}=0, \ \ \forall m\in\Z.$$
Consequently, by (\ref{ef-2}), we have \begin{equation}\label{ef-20}(m+n)(\l_{\epsi m-1}-\l_{\epsi n-1})=(m-n)\l_{\epsi(m+n)-1}\end{equation}
Let $n=-1$ in (\ref{ef-20}) and use induction on $m$, we get
$$\l_{\epsi m-1}=\dis\frac{m(m-1)}{2}(\l_{\epsi-1}+\l_{-\epsi-1})+m\l_{\epsi-1}, \ \ \quad \forall m\in\Z.$$
By (\ref{ef-3}), we have \begin{equation}\label{ef-21}
(m+1)e_{\epsi(m+n+1)-1}=(m+n+1)e_{\epsi(m+1)-1}.\end{equation}
Let $m=0$ in (\ref{ef-21}), we get $$e_{\epsi(n+1)-1}=(n+1)e_{\epsi-1}, \ \ \quad \forall n\in\Z.$$
Set $c=\dis\frac{\l_{\epsi-1}+\l_{-\epsi-1}}{2},\ d=\l_{\epsi-1}-c,\ e=e_{\epsi-1}$, then
\begin{equation}\label{ef-23}\bar{\si}(L_{n})=\epsi \a^{n}L_{\epsi n}+\a^{n}n(nc+d)I_{\epsi n-1},\ \ \
\bar{\si}(I_n)=\a^{n}\mu I_{\epsi (n+1)-1},\end{equation}
\begin{equation}\label{ef-24}\bar{\si}(Y_{n+\frac{1}{2}})=\a^{n}(n+1)eI_{\epsi (n+1)-1}+\a^n\sqrt{\epsi\a\mu}\ Y_{\epsi (n+1)-1+\frac{1}{2}},\end{equation}for all $n\in\Z$. Where $c,\ d,\ e\in\C,\ \a,\ \mu\in\C^*,\ \epsi=\pm 1$.
If $\bar{\si}$ is a linear operator on $\W^g(a,b)$ satisfying (\ref{ef-23}) and (\ref{ef-24}) for some $\a, \mu\in\C^*$, then it is easy to
check that $\bar{\si}\in\Aut(W^g(a,b))$. Denote by $\bar{\si}(\epsi, \a,  \mu, c, d, e)$ the automorphism of
$\W^g(1,1)$ satisfying (\ref{ef-23}) and (\ref{ef-24}), then
\begin{eqnarray}\label{ef-17}
&&\bar{\si}(\epsi_{1}, \a_{1}, \mu_{1}, c_{1}, d_{1}, e_{1})
\bar{\si}(\epsi_{2}, \a_{2}, \mu_{2}, c_{2}, d_{2}, e_{2})
\\&&=\bar{\si}(\epsi_{1}\epsi_{2}, \a^{\epsi_{2}}_{1}\a_{2},
\a^{\epsi_2-1}_{1}\mu_{1}\mu_{2}, \epsi_{2}c_{1}+\a_1^{-1}\mu_{1}c_{2}, d_{1}+\a_1^{-1}\mu_{1}d_{2}, \a^{\epsi_2-1}_{1}\mu_{1}e_{2}+\a^{\epsi_2-1}_{1}\epsi_{2}e_{1}\sqrt{\epsi_2\a_2\mu_2}),\nonumber
\end{eqnarray}
and $\bar{\si}(\epsi_{1}, \a_{1}, \mu_{1}, c_{1}, d_{1}, e_1)
=\bar{\si}(\epsi_{2}, \a_{2}, \mu_{2}, c_{2}, d_{2}, e_2)$ if and only if
$\epsi_{1}=\epsi_{2}$, $\a_{1}=\a_{2}$, $\mu_{1}= \mu_{2}$,
$c_{1}=c_{2}$, $d_{1}=d_{2}$, $e_1=e_2$. Let
$$\bar{\si}_{\epsi}=\bar{\si}(\epsi, 1,  1, 0, 0, 0),\ \
\bar{\tau}_{\a, \mu}=\bar{\si}(1, \a, \mu, 0, 0, 0),\ \ \bar{\si}_{c,
d, e}=\bar{\si}(1, 1, 1, c, d, e)$$
$$\mathfrak{a}_{3}=\{\bar{\si}_{\epsi} \;|\; \epsi\in\{\pm 1\} \}, \
\ \mathfrak{b}_{3}=\{\bar{\tau}_{\a, \mu}\;|\;\a, \mu\in\C^{*}\}, \
\ \mathfrak{c}_{3}=\{\bar{\si}_{c, d, e}\;|\; c,d,e\in\C\}.$$
From (\ref{ef-17}), we can see that $\mathfrak{a}_{3}$, $\mathfrak{b}_{3}$, $\mathfrak{c}_{3},$ are subgroups of $\Aut(\W^g(1,1))$. Similar to the discussion in lemma \ref{L4.f2} and \cite{SJ}, we have
$$\Aut(\W^g(1,1))=(\mathfrak{I}\mathfrak{c}_{3})\rtimes(\mathfrak{a}_{3}\ltimes\mathfrak{b}_{3})
\cong (\C^{\infty}\rtimes\C^{\infty})\rtimes(\Z_{2}\ltimes(\C^{*}\times\C^{*})).$$
{\bf Case 3:} $b\neq 0,1$. We always have $\l_{-1}=0$. Let $n=-m, n=1$ in (\ref{ef-1}) respectively, we have
\begin{equation}\label{ef-25}(3-b)(\mu_{\epsi m-1}+\mu_{\epsi (-m)-1})=0,
\end{equation}
$$(2m+b-1)\mu_{\epsi m-1}-[2+(b-1)m]\mu_{\epsi -1}=2(m-1)\mu_{\epsi (m+1)-1}.$$
That is \begin{equation}\label{ef-26}2(m-1)[\mu_{\epsi (m+1)-1}-(m+1)\mu_{\epsi -1}]=(2m+b-1)[\mu_{\epsi m-1}-m\mu_{\epsi -1}].\end{equation}
{\bf Subcase 3.1:} $b\neq 3$. From (\ref{ef-25}), we have
\begin{equation}\label{ef-27}\mu_{\epsi (-m)-1}=-\mu_{\epsi m-1}, \quad \forall m\in\Z.\end{equation}
So using induction on $m\geq 2$ in (\ref{ef-26}), we obtain
\begin{equation}\label{ef-28}\mu_{\epsi m-1}=\dis\frac{(3+b)(5+b)\cdots (2m-3+b)}{(2m-4)!!}(\mu_{\epsi 2-1}-2\mu_{\epsi -1})+m\mu_{\epsi -1}, m\geq 3.
\end{equation}
On the other hand, let $m=3, n=-1$ in (\ref{ef-1}) and use (\ref{ef-27}), (\ref{ef-28})
we have
\begin{equation}\label{ef-29}(5+4b-b^2)\mu_{\epsi 2-1}=2(5+4b-b^2)\mu_{\epsi -1}.
\end{equation}
If $5+4b-b^2\neq 0$, that is $b\neq -1$ and $b\neq 5$, then
 $\mu_{\epsi 2-1}=2\mu_{\epsi -1}$.
Consequently, $$\mu_{\epsi m-1}=m\mu_{\epsi -1}, \ \ \forall m\in\Z.$$
From (\ref{ef-2}), we have \begin{equation}\label{ef-30}
mn\epsi (m-n)\mu_{\epsi -1}^2+\l_{\epsi m-1}(m+bn)-\l_{\epsi n-1}(n+bm)=(m-n)\l_{\epsi (n+m)-1}.
\end{equation}
Let $n=-1, m+n=0$ in (\ref{ef-30}) respectively, we get
\begin{equation}\label{ef-33}-m(m+1)\epsi\mu_{\epsi -1}^2+\l_{\epsi m-1}(m-b)-\l_{\epsi(-1)-1}(-1+bm)=(m+1)\l_{\epsi (m-1)-1}
\end{equation}

\begin{equation}\label{ef-31}
(1-b)(\l_{\epsi m-1}+\l_{\epsi (-m)-1})=2\epsi m^2\mu_{\epsi -1}^2\end{equation}
Let $m=2$ in (\ref{ef-33}) and use (\ref{ef-31}), we have
\begin{equation}\label{ef-32}(-4+2b)\epsi\mu_{\epsi -1}^2+(2-b)(1-b)\l_{\epsi 2-1}=(1-b)(4-2b)\l_{\epsi -1}
\end{equation}
If $b\neq 2$, we have \begin{equation}\label{ef-34}2\epsi\mu_{\epsi -1}^2+2(1-b)\l_{\epsi -1}=(1-b)\l_{\epsi 2-1}
\end{equation}
Using (\ref{ef-31}), we obtain \begin{equation}\label{ef-56}\l_{\epsi 2-1}=3\l_{\epsi -1}+\l_{\epsi(-1) -1}.\end{equation} Similarly, we also get
$$\l_{\epsi 3-1}=6\l_{\epsi -1}+3\l_{\epsi(-1) -1}$$
Then induction on $m$ in (\ref{ef-33}) and using (\ref{ef-31}) and (\ref{ef-56}), we have
$$\l_{\epsi m-1}=\dis\frac{m(m+1)}{2}\l_{\epsi -1}+\dis\frac{m(m-1)}{2}\l_{\epsi(-1) -1}=\dis\frac{m(m-1)}{2}[\l_{\epsi -1}+\l_{\epsi(-1)-1}]+m\l_{\epsi -1}, \ \forall m\in\Z.$$
Let $m=0, n=1$ and $m=-1$ in (\ref{ef-3}) respectively, we have \begin{equation}\label{ef-36}
(b+1)(e_{\epsi 2-1}-2e_{\epsi-1})=0\end{equation} and
\begin{equation}\label{ef-35}
\dis\frac{b-1}{2}e_{\epsi n-1}-be_{-1}=-\epsi\a\mu_{\epsi n-1}f_{-1}, \ \ n\neq 0.\end{equation}
From the two identities, we have $e_{-1}=0$ if $b\neq -1$.
So $$e_{\epsi n-1}=\dis\frac{-2\epsi n\a\mu_{\epsi-1}f_{-1}}{b-1}, \ \ \forall n\in\Z.$$
Consequently, by (\ref{ef-10}), we get $$e_{\epsi (n+1)-1}=\dis\frac{-2\epsi (n+1)\mu_{\epsi-1}\sqrt{\epsi\a\mu}}{b-1}, \ \ \forall n\in\Z.$$
Set $\frac{\l_{\epsi-1}+\l_{\epsi(-1)-1}}{2}=c, \l_{\epsi-1}=d$, then by (\ref{ef-31}), $\mu_{\epsi-1}=\sqrt{\epsi(1-b)c}$. So we get
\begin{equation}\label{ef-37}\bar{\si}(L_{n})=\epsi \a^{n}L_{\epsi n}+\a^{n}n((n-1)c+d)I_{\epsi n-1}+\a^{n}n\sqrt{\epsi(1-b)c}Y_{\epsi n-1+\frac{1}{2}},\end{equation}
\begin{equation}\label{ef-39}\bar{\si}(I_n)=\a^{n}\mu I_{\epsi (n+1)-1},\end{equation}
\begin{equation}\label{ef-38}\bar{\si}(Y_{n+\frac{1}{2}})=\a^{n}\dis\frac{-2\epsi (n+1)\sqrt{(1-b)c\a\mu}}{b-1}I_{\epsi (n+1)-1}+\a^n\sqrt{\epsi\a\mu}\ Y_{\epsi (n+1)-1+\frac{1}{2}},\end{equation}for all $n\in\Z$. Where $c,\ d \in\C,\ \a,\ \mu\in\C^*,\ \epsi=\pm 1$.
Similar to the discussion in lemma \ref{L4.f2}, we have $$\Aut(\W^g(1,b))
\cong (\C^{\infty}\rtimes\C^{\infty})\rtimes(\Z_{2}\ltimes(\C^{*}\times\C^{*})), \ \ b\neq 0, 1, 2, 3, -1, 5.$$
If $b=2$, Induction on $m$ in (\ref{ef-33}) and using (\ref{ef-31}), we have
$$\l_{\epsi m-1}=\dis\frac{m(m-1)(m+1)}{6}\l_{\epsi2 -1}-\dis\frac{m(m-1)(m-2)}{6}[\l_{\epsi -1}+\l_{\epsi(-1)-1}]-\dis\frac{m(m-2)(m+2)}{3}\l_{\epsi -1},$$ for all $m\in\Z$. Similar to the above discussion, we have $$\Aut(\W^g(1,2))
\cong (\C^{\infty}\rtimes\C^{\infty})\rtimes(\Z_{2}\ltimes(\C^{*}\times\C^{*})).$$
If $b=-1$, by (\ref{ef-1}), we have \begin{equation}\label{ef-40}\mu_{\epsi m-1}+\mu_{\epsi n-1}=\mu_{\epsi(m+n)-1}.\end{equation}
Let $n=1$ in (\ref{ef-40}) and induction on $m>1$, according to (\ref{ef-27}), we still get $\mu_{\epsi m-1}=m\mu_{\epsi-1}$ for all $m\in\Z.$
By (\ref{ef-35}), we obtain $e_{\epsi n-1}=\epsi\a n\mu_{\epsi-1}f_{-1}+e_{-1}$ for all $n\in\Z.$ We still have the following result
$$\Aut(\W^g(1,-1))
\cong (\C^{\infty}\rtimes\C^{\infty})\rtimes(\Z_{2}\ltimes(\C^{*}\times\C^{*})).$$
If $b=5$, by (\ref{ef-1}), we get $$\mu_{\epsi m-1}=\dis\frac{m(m-1)(m+1)}{6}(\mu_{\epsi2-1}-2\mu_{\epsi-1})+m\mu_{\epsi-1}, \ \ \forall m\in\Z.$$
Let $m+n=0, n=1$ in (\ref{ef-2}) respectively, for all $m\in\Z$, we have
\begin{equation}\label{ef-41}2(\l_{\epsi m-1}+\l_{\epsi (-m)-1})=-\epsi\mu_{\epsi m-1}^2, \end{equation}
\begin{equation}\label{ef-42}\epsi(m-1)\mu_{\epsi m-1}\mu_{\epsi-1}+(m+5)\l_{\epsi m-1}-(1+5m)\l_{\epsi-1}=(m-1)\l_{\epsi(m+1)-1}.\end{equation}
Let $m=\pm 2$ in (\ref{ef-42}) respectively and use (\ref{ef-41}), we obtain
\begin{equation}\label{ef-43}\l_{\epsi 2-1}=-\dis\frac{\epsi(\mu_{\epsi 2-1}-\mu_{\epsi-1})^2}{2}+2\l_{\epsi-1},\end{equation}
\begin{equation}\label{ef-44}
\l_{\epsi 3-1}=\epsi\mu_{\epsi2-1}\mu_{\epsi-1}-\frac{7}{2}\epsi(\mu_{\epsi2-1}-\mu_{\epsi-1})^2+3\l_{\epsi-1}.\end{equation}
On the other hand, let $m=-3$ in (\ref{ef-42}) and use (\ref{ef-41}), (\ref{ef-43}), we have
\begin{equation}\label{ef-45}\l_{\epsi 3-1}=26\epsi\mu_{\epsi2-1}\mu_{\epsi-1}-\frac{43}{2}\epsi\mu_{\epsi-1}^2-8\epsi\mu_{\epsi2-1}^2+3\l_{\epsi-1}.\end{equation}
Combine (\ref{ef-44}) and (\ref{ef-45}), we still get $$\mu_{\epsi2-1}=2\mu_{\epsi-1}.$$ So
$$\mu_{\epsi m-1}=m\mu_{\epsi-1}, \ \ \ \forall m\in\Z.$$ Consequently, we get the complete same results as the above lemma,
that is $$\Aut(\W^g(1,5))
\cong (\C^{\infty}\rtimes\C^{\infty})\rtimes(\Z_{2}\ltimes(\C^{*}\times\C^{*})).$$
{\bf Subcase 3.2:} $b=3$. Letting $n=\pm 1$ in (\ref{ef-1}) respectively and using induction on $m$, we can deduce
\begin{equation}\label{ef-50}\mu_{\epsi m-1}=\dis\frac{m(m-1)}{2}(\mu_{\epsi-1}+\mu_{\epsi(-1)-1})+m\mu_{\epsi-1},  \ \forall m\in\Z.\end{equation}
Let $m+n=0, n=1$ in (\ref{ef-2}) respectively, for all $m\in\Z$, we have
\begin{equation}\label{ef-46}\l_{\epsi m-1}+\l_{\epsi (-m)-1}=\epsi\mu_{\epsi m-1}\mu_{\epsi(-m)-1}, \end{equation}
\begin{equation}\label{ef-47}\epsi(m-1)\mu_{\epsi m-1}\mu_{\epsi-1}+(m+3)\l_{\epsi m-1}-(1+3m)\l_{\epsi-1}=(m-1)\l_{\epsi(m+1)-1}.\end{equation}
Induction on $m>2$ in (\ref{ef-47}) and computation by computer, we obtain
\begin{equation}\label{ef-48}\l_{\epsi m-1}=m\l_{\epsi-1}-\dis\frac{m(m-1)}{2}\epsi\mu_{\epsi-1}^2+\dis\frac{m(m^3+m-2)}{4}\epsi(\mu_{\epsi-1}+\mu_{\epsi(-1)-1})\mu_{\epsi-1}, \  m>2.\end{equation}
At the same time, let $m=-3n$ in (\ref{ef-2}), we have
\begin{equation}\label{ef-49}\l_{\epsi (-2n)-1}=-2\l_{\epsi n-1}+\epsi\mu_{\epsi(-3n)-1}\mu_{\epsi n-1}, \ \ \forall n\in\Z.\end{equation}
Let $n=-2$ and $n=1$ in (\ref{ef-49}) and use (\ref{ef-50}), we have
\begin{equation}\label{ef-51}\l_{\epsi 4-1}=4\l_{\epsi-1}+15\epsi\mu_{\epsi-1}^2+45\epsi\mu_{\epsi(-1)-1}^2+66\epsi\mu_{\epsi-1}\mu_{\epsi(-1)-1}.\end{equation}
On the other hand, by (\ref{ef-48}), we have
\begin{equation}\label{ef-52}\l_{\epsi 4-1}=4\l_{\epsi-1}+60\epsi\mu_{\epsi-1}^2+66\epsi\mu_{\epsi-1}\mu_{\epsi(-1)-1}.\end{equation}
Compare (\ref{ef-51}) with (\ref{ef-52}), we obtain $$\mu_{\epsi-1}^2=\mu_{\epsi(-1)-1}^2$$
If $\mu_{\epsi-1}=-\mu_{\epsi(-1)-1}$, by (\ref{ef-50}), (\ref{ef-46}) and (\ref{ef-48}), we have
$$\mu_{\epsi m-1}=m\mu_{\epsi-1}, \ \ \l_{\epsi m-1}=m\l_{\epsi-1}+\dis\frac{m(m-1)}{2}(\l_{\epsi-1}+\l_{\epsi(-1)-1}), \ \forall m\in\Z.$$
Similar to the discussion in lemma \ref{L4.f2}, we get
$$\Aut(\W^g(1,3))
\cong (\C^{\infty}\rtimes\C^{\infty})\rtimes(\Z_{2}\ltimes(\C^{*}\times\C^{*})).$$

If $\mu_{\epsi-1}=\mu_{\epsi(-1)-1}$, by (\ref{ef-50}), (\ref{ef-46}) and (\ref{ef-48}), we have
$$\mu_{\epsi m-1}=m^2\mu_{\epsi-1}, \ \ \l_{\epsi m-1}=m\l_{\epsi-1}+\dis\frac{m(m^3-1)}{2}(\l_{\epsi-1}+\l_{\epsi(-1)-1}), \ \forall m\in\Z.$$
Set $\dis\frac{\l_{\epsi-1}+\l_{\epsi(-1)-1}}{2}=c, \dis\frac{\l_{\epsi-1}-\l_{\epsi(-1)-1}}{2}=d$, then $\mu_{\epsi-1}=\sqrt{2c\epsi}$. Consequently, we have
\begin{equation}\label{ef-53}\bar{\si}(L_{n})=\epsi \a^{n}L_{\epsi n}+\a^{n}n(n^3c+d)I_{\epsi n-1}+\a^{n}n^2\sqrt{2\epsi c}Y_{\epsi n-1+\frac{1}{2}},\end{equation}
\begin{equation}\label{ef-54}\bar{\si}(I_n)=\a^{n}\mu I_{\epsi (n+1)-1},\end{equation}
\begin{equation}\label{ef-55}\bar{\si}(Y_{n+\frac{1}{2}})=-\a^{n}\epsi (n+1)\sqrt{2c\a\mu}I_{\epsi (n+1)-1}+\a^n\sqrt{\epsi\a\mu}\ Y_{\epsi (n+1)-1+\frac{1}{2}},\end{equation}for all $n\in\Z$, where $c,\ d \in\C,\ \a,\ \mu\in\C^*,\ \epsi=\pm 1$.
Similar to the discussion in lemma \ref{L4.f2}, we still have
$$\Aut(\W^g(1,3))
\cong (\C^{\infty}\rtimes\C^{\infty})\rtimes(\Z_{2}\ltimes(\C^{*}\times\C^{*})).$$
\end{proof}
By lemma \ref{L4.f1}-\ref{L4.f3}, we get the main theorem of this section.
\begin{theorem}\label{T4}
\begin{eqnarray}\nonumber
\Aut(\Wgab)\cong\left\{
\begin{array}{ll}
& (\C^{\infty}\rtimes\C^{\infty})\rtimes(\C^{*}\times \C^{*}), \;\;\; if\;
a\not\in\Z,
\vs{4pt}\\
& (\C^{\infty}\rtimes\C^{\infty})\rtimes(\Z_{2}\ltimes(\C^{*}\times\C^{*})),\ \
otherwise.
\end{array}
\right.
\end{eqnarray}
where  $\C^{\infty}=\{(a_{i})_{i\in\Z}\;|\; a_{i}\in\C, {\rm{\; all
\; but\;  \; a \;  finite \; number \; of \;  the}} \; a_{i} \;
{\rm{ \; are \; zero}} \ \}$, and $\Z_{2}=\Z/2\Z$,
$\C^{*}=\C\backslash\{0\}$.
\end{theorem}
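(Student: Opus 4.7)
The plan is to assemble the theorem as a direct corollary of Lemmas \ref{L4.f1}, \ref{L4.f2}, and \ref{L4.f3}, which between them exhaust the possible values of $a$. First I would invoke part (1) of Lemma \ref{P2.1}, which gives the isomorphism $\Wgab\simeq \W^g(a+k,b)$ for every $k\in 2\Z$; this reduces the integer case to the two representatives $a=0$ (even) and $a=1$ (odd) according to the parity of $a$, as already recorded in Remark \ref{R}. Together with the non-integer case, only three parameter regimes remain to be treated.

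Next, the three lemmas are applied case by case. Lemma \ref{L4.f1} yields $\Aut(\Wgab)\cong(\C^{\infty}\rtimes\C^{\infty})\rtimes(\C^{*}\times\C^{*})$ when $a\not\in\Z$, while Lemmas \ref{L4.f2} and \ref{L4.f3} both deliver $(\C^{\infty}\rtimes\C^{\infty})\rtimes(\Z_{2}\ltimes(\C^{*}\times\C^{*}))$ for $a=0$ and $a=1$ respectively. Because the two integer cases give the same abstract group, the dichotomy stated in the theorem follows immediately.

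The conceptual point worth emphasizing in the write-up is the origin of the extra $\Z_2$ factor in the integer case: it is precisely the sign $\epsilon=\pm 1$ inherited from the automorphism $\sigma|_{\W}(L_m)=\epsilon\alpha^{m}L_{\epsilon m}$ of the classical Witt algebra. For $a\not\in\Z$, the identity $\epsilon(n_i+a)-n=0$ coming from $[\bar\sigma(L_0),\bar\sigma(L_n)]=-n\bar\sigma(L_n)$ admits no solution for $\epsilon=-1$, so the sign group collapses and only the torus $\C^{*}\times\C^{*}$ of scalings remains. For integer $a$ both signs are permitted, and $\epsilon=-1$ acts nontrivially on the torus by inverting $\alpha$, producing the semidirect factor $\Z_{2}\ltimes(\C^{*}\times\C^{*})$.

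The main obstacle has in fact already been absorbed into the lemmas themselves; no new calculation is needed at this assembly stage. The delicate work was the case analysis in Lemma \ref{L4.f3}, where $a=1$ and generic $b$ branch into the exceptional values $b\in\{0,1,2,3,-1,5\}$, each requiring a separate inductive determination of the coefficient sequences $\mu_{\epsilon m-1}$, $\lambda_{\epsilon m-1}$, $e_{\epsilon(n+1)-1}$ from the bracket identities \eqref{ef-1}--\eqref{ef-5}. Once it has been checked uniformly in those sub-cases that the resulting automorphism group has the same abstract structure $(\C^{\infty}\rtimes\C^{\infty})\rtimes(\Z_{2}\ltimes(\C^{*}\times\C^{*}))$, Theorem \ref{T4} is obtained simply by collecting the outputs of Lemmas \ref{L4.f1}--\ref{L4.f3}.
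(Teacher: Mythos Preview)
Your proposal is correct and matches the paper's own approach: the paper's proof of Theorem~\ref{T4} is nothing more than the one-line remark that the result follows from Lemmas~\ref{L4.f1}--\ref{L4.f3}, together with the reduction of the integer case to $a\in\{0,1\}$ via Lemma~\ref{P2.1}(1) and Remark~\ref{R}. Your added conceptual explanation of the $\Z_2$ factor is helpful exposition, though note that in the paper the exclusion of $\epsilon=-1$ for $a\notin\Z$ is actually deduced from the action on $\bar\sigma(I_m)$ and $\bar\sigma(Y_{m+1/2})$ rather than from $\bar\sigma(L_n)$ alone.
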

\qed

\bibliography{}

\end{document}